\documentclass[A4paper]{amsart}

\usepackage[all]{xy}
\usepackage{amsmath}
\usepackage{amssymb} 
\usepackage{amsfonts}
\usepackage{multicol} 
\usepackage{hyperref}
\usepackage{lscape}
\usepackage{wasysym}

\textwidth16cm
\textheight22cm
\evensidemargin.2cm
\oddsidemargin.2cm
\topmargin.2cm

\newtheorem{theorem}{Theorem}[section]
\newtheorem{proposition}[theorem]{Proposition}
\newtheorem{corollary}[theorem]{Corollary}
\newtheorem{lemma}[theorem]{Lemma}
\newtheorem*{theorem*}{Theorem}
\newtheorem*{proposition*}{Proposition}
\newtheorem*{corollary*}{Corollary}
\newtheorem*{lemma*}{Lemma}
\theoremstyle{definition}
\newtheorem{definition}[theorem]{Definition}
\newtheorem{punto}[theorem]{}
\newtheorem{example}[theorem]{Example}
\newtheorem{remark}[theorem]{Remark}
\newtheorem*{remark*}{Remark}

\newtheorem*{definition*}{Definition}
\newtheorem{aussage}[theorem]{}
\newtheorem*{aussage*}{}

\numberwithin{equation}{section}

\def\ox{\otimes}
\def\cat{\mathsf}
\def\·{\cdot}
\def\mathbbm{\mathbf}
\def\other{^{{\prime}}}
\def\grtilde{\tilde}

\newcommand{\tagarray}{\mbox{}\refstepcounter{equation}$(\theequation)$}
\def\cdotdot{\cdot\!\!\cdot\,}

\begin{document}

\title{On the category of weak bialgebras} 
\author{Gabriella B\"ohm}
\address{Wigner Research Centre for Physics, Budapest,
H-1525 Budapest 114, P.O.B.\ 49, Hungary}
\email{bohm.gabriella@wigner.mta.hu}
\author{Jos\'e G\'omez-Torrecillas}
\address{Departamento de \'Algebra, Universidad de Granada, 
E-18071 Granada, Spain} 
\email{gomezj@ugr.es}
\thanks{Research partially supported by the Hungarian Scientific Research
 Fund OTKA --- grant no. K 108384 ---, the Spanish Ministerio de 
 Ciencia en Innovaci\'on and the European Union --- grant MTM2010-20940-C02-01
 and integrated grants from the FPI programme BES-2011-044383 and
 EEBB-I-12-05437 --- and the Nefim Funds of Wigner RCP, Budapest.\\ 
 All authors are grateful to the referee for highly valuable comments. 
 G. B. gratefully acknowledges a generous invitation and warm hospitality at
 Universidad de Granada in February of 2012. 
 J. G.-T. thanks the members of the Wigner RCP for their kind invitation and
 for the warmest hospitality during his visits in May of 2012 and February of
 2013. 
 E. L.-C. fully thanks her coauthors for their great support and the members
 of the Wigner RCP for the warmest hospitality and kindness experienced
 during her stay in April, May and June of 2012 and her visit in January and
 February of 2013.} 
\author{Esperanza L\'opez-Centella}
\address{Departamento de \'Algebra, Universidad de Granada,
E-18071 Granada, Spain}
\email{esperanza@ugr.es}

\begin{abstract}Weak (Hopf) bialgebras are described as (Hopf) bimonoids in
 appropriate duoidal (also known as 2-monoidal) categories. This
 interpretation is used to define a category $\mathsf{wba}$ of weak
 bialgebras over a given field. As an application, the ``free vector space''
 functor from the category of small categories with finitely many objects to
 $\mathsf{wba}$ is shown to possess a right adjoint, given by taking
 (certain) group-like elements. This adjunction is proven to
 restrict to the full subcategories of groupoids and of weak Hopf algebras,
 respectively. As a corollary, we obtain equivalences between the category of
 small categories with finitely many objects and the category of pointed
 cosemisimple weak bialgebras; and between the category of small groupoids
 with finitely many objects and the category of pointed cosemisimple weak
 Hopf algebras. 
\end{abstract}
\date{Oct, 2013}
\maketitle

\section*{Introduction.}
Some generalizations of (Hopf) bialgebras --- which have been studied
intensively on their own right --- were shown to be instances of (Hopf)
bimonoids in appropriately constructed braided (or even symmetric) monoidal
categories. This was done, for example, in \cite{CaeDLo} for Turaev's group
(Hopf) bialgebras \cite{Tur} and in \cite{CaeGoy} for Makhlouf and
Silvestrov's hom (Hopf) bialgebras \cite{SilMak}. Such a description allows
for a unified treatment of all these structures, it conceptually explains the
origin of some results obtained earlier by other means and it also makes
available the general theory of (Hopf) bimonoids in braided monoidal
categories.

{\em Weak (Hopf) bialgebras} in \cite{BohmEtAll:1999}, however, do not seem to
be (Hopf) bimonoids in any braided monoidal category. Our first aim in this
paper is to describe them rather as (Hopf) bimonoids in so-called {\em
duoidal categories}. 

Duoidal categories were introduced by Aguiar and Mahajan in
\cite{Aguiar&Mahajan} under the original name `2-monoidal category'. These are 
categories with two, possibly different, monoidal structures. They are
required to be compatible in the sense that the functors and natural
transformations defining the first monoidal structure, are comonoidal with
respect to the second monoidal structure. Equivalently, the functors and
natural transformations defining the second monoidal structure, are monoidal
with respect to the first monoidal structure. Whenever both monoidal
structures coincide, we re-obtain the notion of braided monoidal category. 
More details will be recalled in Section \ref{sec:prelims}. A {\em bimonoid}
in a duoidal category is a monoid with respect to the first monoidal structure
and a comonoid with respect to the second monoidal structure. The
compatibility axioms are formulated in terms of the coherence morphisms
between the monoidal structures, see \cite{Aguiar&Mahajan} (and a short review
in Section \ref{sec:prelims}). In the spirit of \cite{Booker&Street}, a
bimonoid is said to be a {\em Hopf monoid} provided that it induces a right
Hopf comonad in the sense of \cite{BrLaVi}, see Section \ref{sec:prelims}. 

An inspiring example in \cite[Example 6.43]{Aguiar&Mahajan} says that small
categories can be described as bimonoids in an appropriately chosen duoidal
category: in the category of spans over a given set (the set of objects). This 
construction is re-visited in Section \ref{sec:cat}. By
this motivation we aim to find an appropriate duoidal category whose
bimonoids are `quantum categories'; that is, weak
bialgebras. Recall that weak bialgebras are examples of Takeuchi's
$\times_R$--bialgebras \cite{Takeuchi}, equivalently, of Lu's bialgebroids
\cite{Lu}; such that the base algebra $R$ carries a separable Frobenius
structure \cite{Szlach:2001,Schauenburg:2003}. Bialgebroids whose base algebra
$R$ is central, were described in \cite[Example 6.44]{Aguiar&Mahajan} as
bimonoids in the duoidal category of $R$--bimodules. It was also discussed
there that arbitrary bialgebroids are beyond this framework because the
candidate --- Takeuchi's $\times_R$--operation --- does not define a monoidal
product in general. 

In Section \ref{sec:wba} we study the category of bimodules over $R\ox R^{op}$
for a separable Frobenius algebra $R$. Observing that in this case Takeuchi's 
$\times_R$--product becomes isomorphic to some (twisted) bimodule tensor
product over $R\ox R^{op}$, we equip this category with a duoidal
structure. Moreover, we show that its bimonoids are precisely the weak
bialgebras whose base algebra is isomorphic to $R$. 

This interpretation of weak bialgebras as bimonoids allows us to define a
category $\mathsf{wba}$ of weak bialgebras (by applying a more
general construction in Section \ref{sec:cat_bmd}). Morphisms, from a
weak bialgebra $H$ with separable Frobenius base algebra $R$, to a weak
bialgebra $H'$ with separable Frobenius base algebra $R'$, are pairs of
coalgebra maps $R\to R'$ and $H\to H'$ with additional properties that ensure
that they induce a morphism of monoidal comonads --- in the sense of
\cite{Szlachanyi:2003} --- from the monoidal comonad induced by $H$ on the
category of $R\ox R^{op}$--bimodules to the monoidal comonad induced by $H'$ on
the category of $R'\ox R^{\prime op}$--bimodules. 

The vector space spanned by any small category with finitely many objects
carries a weak bialgebra structure \cite{Bohm:2009,NykVai}. This turns out to
yield the object map of a functor $\mathsf k$ from the category $\mathsf{cat}$
of small categories with finitely many objects to $\mathsf{wba}$, see Section
\ref{sec:functor_k}. In Sections \ref{sec:gr-like} and \ref{sec:adjoint} we
show that it possesses a right adjoint: For the interval category
$\mathbbm{2}$ and any weak bialgebra $H$, we consider the set $\mathsf g
(H):=\mathsf{wba}(\mathsf k (\mathbbm 2),H)$. In general, it is isomorphic to
a subset of the set of so-called `group-like elements'; that is, 
of coalgebra maps from the base field to $H$ (not to be mixed with the weakly
group-like elements in \cite{BohmEtAll:1999} and \cite{Vecs}). 
In favorable situations --- for example, if $H$ is cocommutative
or $H$ is a weak Hopf algebra --- $\mathsf g (H)$ is proven to be isomorphic
to the set of group-like elements. 
For any weak bialgebra $H$, $\mathsf g (H)$ is interpreted as the morphism set of
a category and it is shown to obey $\mathsf{wba}(\mathsf k (A),H)\cong 
\mathsf{cat}(A,\mathsf g(H))$, for any small category $A$ with finitely many
objects. The unit of this adjunction is a natural isomorphism. The component
of the counit at some weak bialgebra $H$ is an isomorphism if and only if $H$
is pointed cosemisimple (as a coalgebra). So we obtain an equivalence between
$\mathsf{cat}$ and the full subcategory in $\mathsf{wba}$ of all pointed
cosemisimple weak bialgebras. 

Defining Hopf monoids in duoidal categories as bimonoids that induce right
Hopf comonads in the sense of \cite{BrLaVi}, the Hopf monoids in the duoidal
category of spans turn out to be precisely the small groupoids. In the duoidal
category of bimodules over $R\ox R^{op}$, for a separable Frobenius algebra
$R$, Hopf monoids turn out to be precisely the weak Hopf algebras with base
algebra isomorphic to $R$. In Section \ref{sec:Hopf} we show that the
adjunction between $\mathsf{cat}$ and $\mathsf{wba}$ restricts to an
adjunction between the category $\mathsf{grp}$ of small groupoids with
finitely many objects, and the full subcategory $\mathsf{wha}$ in
$\mathsf{wba}$ of all weak Hopf algebras. Consequently, the equivalence
between $\mathsf{cat}$ and the full subcategory in $\mathsf{wba}$ of all
pointed cosemisimple weak bialgebras restricts to an equivalence between
$\mathsf{grp}$ and the full subcategory in $\mathsf{wha}$ of all pointed
cosemisimple weak Hopf algebras. This extends the well-known relation between
groups and pointed cosemisimple Hopf algebras, see for example \cite{Abe}. 

\tableofcontents

\section{Preliminaries.}
\label{sec:prelims}

Let $(\cat{C},\otimes, I)$ be a monoidal category, with underlying category
$\cat{C}$, monoidal product $\otimes$ and unit $I$. For any objects
$A,B,C$ of $\cat{C}$, we will denote by $\alpha$ the associator natural
isomorphism 
$$
\xymatrix{
\alpha_{A,B,C}:(A\otimes B)\otimes C\ar[r] & 
A\otimes (B\otimes C),} 
$$
and by $\lambda$ and $\rho$ the unit natural isomorphisms 
\begin{equation*}
\lambda_A: I\otimes A\rightarrow A, \qquad \rho_A: A\otimes I\rightarrow A. 
\end{equation*}
The composition of morphisms will be denoted by juxtaposition. 

\subsection{Duoidal categories}
A \emph{duoidal category} (introduced in \cite{Aguiar&Mahajan} under the name
of $2$-monoidal category\label{duoidalcatdefinition}) is a five tuple
$(\cat{C},\circ,I,\bullet,J)$, where $(\cat{C},\circ,I)$ and
$(\cat{C},\bullet, J)$ are monoidal categories, along with a transformation
(called \emph{the interchange law}) 
\begin{equation}\label{duoidalinterchangelaw}
\gamma_{A,B,C,D}:(A\bullet B)\circ (C\bullet D)\rightarrow (A\circ
C)\bullet(B\circ D) 
\end{equation}
which is natural in $A,B,C$ and $D$, and three morphisms
\begin{equation}\label{duoidalmorphisms}
\mu_{J}: J \circ J\rightarrow J,\qquad \Delta_{I}: I\rightarrow I\bullet I,
\qquad \tau: I\rightarrow J 
\end{equation}
such that the axioms below are satisfied.

\textit{Compatibility of units}. The units $I$ and $J$ are compatible in the
sense that $(J,\mu_J,\tau)$ is a monoid in $(\cat{C},\circ,I)$ and
$(I,\Delta_I,\tau)$ is a comonoid in $(\cat{C},\bullet,J)$. Equivalently, the
following diagrams commute: 
\begin{center}
\begin{tabular}{ll}
\tagarray\qquad\hspace{12pt}
\raisebox{.8cm}{\xymatrix@R=8pt@C=26pt{
I\ar[rr]^-{\Delta_I} \ar[dd]_-{\Delta_I}& & 
I\bullet I\ar[d]^-{\Delta_I\bullet I}\\
& & (I \bullet I)\bullet I \ar[d]^-{\alpha^{\bullet}_{I,I,I}} \\
I\bullet I \ar[rr]_-{I\bullet \Delta_I} & & 
I\bullet (I \bullet I)}}\qquad&
\tagarray\qquad
\raisebox{.8cm}{\xymatrix@R=30pt{
J\bullet I\ar[drr]_-{\lambda_I^{\bullet}} & & 
I\bullet I \ar[ll]_-{\tau\bullet I}\ar[rr]^-{I\bullet \tau} & & 
I\bullet J\ar[dll]^-{\rho^{\bullet}_I}\\
& & I \ar[u]_-{\Delta_I} & & }}\qquad \\
\tagarray\qquad
\raisebox{.8cm}{\xymatrix@R=8pt{
(J\circ J) \circ J\ar[rr]^-{\mu_J\circ J}
\ar[d]_-{\alpha^{\circ}_{J,J,J}} & &
J\circ J\ar[dd]^-{\mu_J}\\
J \circ (J\circ J) \ar[d]_-{J\circ \mu_J} & &\\
J\circ J \ar[rr]_-{\mu_J}& & J}}\qquad&
\tagarray\qquad
\raisebox{.8cm}{\xymatrix@R=30pt{
I\circ J\ar[drr]_-{\lambda^\circ_J}\ar[rr]^-{\tau\circ J} & & 
J\circ J\ar[d]^-{\mu_J} & & J\circ I\ar[ll]_-{J\circ \tau}
\ar[dll]^-{\rho^\circ_J}\\
& & J & &}} \qquad
\end{tabular}
\end{center}

\textit{Associativity}. The following diagrams commute, for any objects
$A,B,C,D,E,F$: 
\begin{center}
\begin{tabular}{c}
\tagarray\label{diag:1associativity}
\qquad
\raisebox{1.5cm}{\xymatrix{
((A\bullet B)\circ (C\bullet D))\circ (E\bullet F) 
\ar[d]_-{\gamma\circ (E\bullet F)}\ar[r]^-{\alpha^{\circ}} & 
(A\bullet B)\circ ((C\bullet D)\circ(E\bullet F))
\ar[d]^-{(A\bullet B)\circ \gamma}\\ 
((A\circ C)\bullet (B\circ D)) \circ (E\bullet F) 
\ar[d]_-{\gamma} & 
(A\bullet B)\circ ((C\circ E)\bullet (D\circ F))\ar[d]^-{\gamma}\\
((A\circ C)\circ E)\bullet 
((B\circ D)\circ F)\ar[r]_-{\alpha^{\circ}\bullet \alpha^{\circ}} & 
(A\circ (C\circ E))\bullet (B\circ (D\circ F))
}}\\
\\
\tagarray\label{diag:2associativity}
\qquad
\raisebox{1.5cm}{\xymatrix{
((A\bullet B)\bullet C)\circ ((D\bullet E)\bullet F) 
\ar[d]_-{\gamma}\ar[r]^-{\alpha^{\bullet}\circ \alpha^{\bullet}} & 
(A\bullet (B\bullet C))\circ (D\bullet(E\bullet F))\ar[d]^-{\gamma}\\
((A\bullet B)\circ (D\bullet E)) \bullet (C\circ F) 
\ar[d]_-{\gamma\bullet (C\circ F)} & 
(A\circ D)\bullet ((B\bullet C)\circ (E\bullet F))
\ar[d]^-{(A\circ D) \bullet\gamma}\\
((A\circ D)\bullet (B\circ E))\bullet (C\circ F)\ar[r]_-{\alpha^{\bullet}} & 
(A\circ D)\bullet ((B\circ E)\bullet (C\circ F))
}}\\
\end{tabular}
\end{center}

\textit{Unitality}. The following diagrams commute, for any objects $A,B$: 
\begin{center}
\begin{tabular}{ll}
\tagarray\label{diag:unit1}
\quad \raisebox{.8cm}{\xymatrix@C=15pt{
I\circ(A\bullet B)\ar[rr]^-{\Delta_I\circ (A\bullet B)} 
\ar[d]_-{\lambda_{A\bullet B}^{\circ}}&& 
(I\bullet I)\circ (A\bullet B)\ar[d]^-{\gamma}\\
A\bullet B && 
(I\circ A)\bullet (I\circ B) 
\ar[ll]^-{ \lambda_A^{\circ}\bullet \lambda_B^{\circ}} 
}}\quad &
\tagarray\label{diag:unit2}
\quad\raisebox{.8cm}{\xymatrix@C=15pt{
(A\bullet B)\circ I\ar[rr]^-{(A\bullet B)\circ \Delta_I} 
\ar[d]_-{ \rho_{A\bullet B}^{\circ}}&& 
(A\bullet B)\circ (I\bullet I)\ar[d]^-{\gamma}\\
A\bullet B && 
(A\circ I)\bullet (B\circ I)
\ar[ll]^-{\rho_A^{\circ} \bullet \rho_B^{\circ}}
}}\\
\tagarray\label{diag:unit3}
\ \raisebox{.8cm}{\xymatrix@C=15pt{
J\bullet(A\circ B)\ar[d]_-{\lambda_{A\circ B}^{\bullet}} && 
(J\circ J)\bullet (A\circ B)\ar[ll]_-{\mu_J\bullet (A\circ B)}\\
A\circ B && 
(J\bullet A)\circ (J\bullet B)
\ar[ll]^-{\lambda_{A}^{\bullet} \circ\lambda_B^{\bullet}}\ar[u]_-{\gamma}
}}&
\tagarray\label{diag:unit4}
\quad \raisebox{.8cm}{\xymatrix@C=15pt{
(A\circ B)\bullet J\ar[d]_-{\rho_{A\circ B}^{\bullet}} && 
(A\circ B)\bullet (J\circ J)\ar[ll]_-{(A\circ B)\bullet \mu_J}\\
A\circ B && 
(A\bullet J)\circ (B\bullet J)
\ar[ll]^-{\rho_{A}^{\bullet} \circ\rho_B^{\bullet}}\ar[u]_-{\gamma}
}}
\end{tabular}
\end{center}

A \emph{bimonoid} in $\cat{C}$ is a quintuple $(H,\mu, \eta,\Delta,\epsilon)$
where $(H,\mu,\eta)$ is a monoid in $(\cat{C},\circ,I)$, $(H,\Delta,\epsilon)$
is a comonoid in $(\cat{C},\bullet,J)$ and both structures are compatible in
the sense that the following four diagrams commute.
\begin{center}
\begin{tabular}{ll}
\tagarray\label{diag:bimon1}
\qquad\raisebox{.8cm}{\xymatrix@C=2pt{
(H\bullet H)\circ (H\bullet H)\ar[rr]^-{\gamma} && 
(H\circ H)\bullet (H\circ H)\ar[d]^-{\mu\bullet {\mu}}\\
H\circ H\ar[r]_-{\mu}\ar[u]^-{\Delta\circ\Delta} 
& H\ar[r]_-{\Delta} & H\bullet H}}\qquad&
\tagarray\label{diag:bimon2}
\qquad\raisebox{.8cm}{\xymatrix@C=30pt{
H\circ H\ar[d]_-{\mu}\ar[r]^-{\epsilon\circ \epsilon} & 
J\circ J\ar[d]^-{\mu_{J}}\\
H\ar[r]_-{\epsilon} & J}}\qquad\\
\tagarray\label{diag:bimon3}
\qquad\hspace{25pt} \raisebox{.8cm}{\xymatrix@C=76pt{
I\ar[r]^-{\eta}\ar[d]_-{\Delta_{I}} &
H\ar[d]^-{\Delta}\\
I\bullet I\ar[r]_-{\eta\bullet\eta} &
H\bullet H}}\hspace{22pt}&
\tagarray\label{diag:bimon4}
\qquad\raisebox{.8cm}{\xymatrix{ & H \ar[dr]^-{\epsilon}& \\
I\ar[ur]^-{\eta}\ar[rr]_-{\tau} && J}}\qquad
\end{tabular}
\end{center}

A \emph{morphism of bimonoids} is a morphism of the underlying monoids and
comonoids.\\ 

\subsection{Monoidal comonads}
Let $\textbf{T}=(T,\delta,\varepsilon)$ be a comonad on a monoidal category
$(\cat{C}, \otimes, I)$ with comultiplication $\delta : T \to T^2$ and counit
$\varepsilon: T \to \cat{C}$. 
We call $\textbf{T}$ a \emph{monoidal comonad} if the endofunctor
$T:\mathsf{C}\to \mathsf{C}$ is monoidal and $\delta$ and $\varepsilon$ are
monoidal natural transformations. 
(We use the term `monoidal functor' in the sense of \cite[Section XI.2]{CWM}.
That is, we mean by it the existence of a morphism ${T_0}: I\to TI$ and a
natural transformation $T_2: T(-)\ox T(-)\to T((-)\ox (-))$ obeying the
evident associativity and unitality conditions. Some authors (for instance, the
authors of \cite{Aguiar&Mahajan}) use the name `lax monoidal' functor for the
same notion. Also for the dual notion, both names `comonoidal functor' (used
in this paper) and `colax monoidal functor' (used in \cite{Aguiar&Mahajan}) do
coexist.) As in \cite[Section XI.2]{CWM}, monoidality of $\delta$ and
$\varepsilon$ means commutativity of the diagrams 
$$
\xymatrix@C=15pt{
 TA\ox TB\ar[rr]^{{T_2}}\ar[d]_{\delta_A\ox \delta_B} && 
T(A\ox B)\ar[d]^{\delta_{A\ox B}}&
I\ar[rr]^{{T_0}}\ar@{=}[d]&& 
TI\ar[d]^{\delta_I}\\
 T^2A\ox T^2B\ar[r]_-{{ T_2}} &
 T(TA\ox TB) \ar[r]_-{ T{T_2}}& 
T^2(A\ox B)&
I\ar[r]_-{{ T_0}} & 
 TI\ar[r]_-{ TT_0}&
T^2I\\
TA\ox TB\ar[rr]^{{T_2}}\ar[d]_{\epsilon_A\ox \epsilon_B} &&
T(A\ox B)\ar[d]^{\epsilon_{A\ox B}}&
I\ar[rr]^{{T_0}}\ar@{=}[rrd] && 
TI\ar[d]^{\epsilon_I}\\
A\ox B\ar@{=}[rr] && 
A\ox B &
&& I}
$$
for any objects $A,B$. 
(Comonoidal monads, the dual concept, were introduced in \cite{Moerdijk:2002}
under the name of Hopf monads.) A bimonoid $H$ in a duoidal category
$(\mathsf{C},\circ,I,\bullet,J)$ induces monoidal comonads $(-)\bullet H$ and
$H\bullet(-)$ on $(M,\circ,I)$ and comonoidal monads $(-)\circ H$ and $H\circ
(-)$ on $(M,\bullet,J)$, see \cite{Booker&Street}. 

Let $\textbf{T'}=(T',\delta',\varepsilon')$ be a second monoidal comonad on a
monoidal category $(\cat{C}',\ox',I')$. Recall from \cite[Definition
 3.1]{Szlachanyi:2003}, that a morphism from $\textbf{T}$ to $\textbf{T'}$ is
a pair $(F:\cat{C}\rightarrow \cat{C}',\Phi:FT\rightarrow T'F)$ where $F$ is a
comonoidal functor and $\Phi$ is a comonad morphism (in the sense of \cite[$\S
 1$]{Street:1972}) rendering commutative also the diagrams 
\begin{equation}\label{comonadmorphism}
\xymatrix@C=2pt{
F(TA\ox TB)\ar[d]_-{F_2}\ar[rrr]^-{FT_2} &&&
FT(A\ox B)\ar[rrrr]^-{\Phi(A\ox B)}&&&&
T'F(A\ox B)\ar[d]^-{T'F_2}&&
FI\ar[d]_-{FT_0}\ar[rrrrrr]^-{F_0}&&&&&&
I'\ar[d]^-{T'_0}\\
FTA\ox' FTB\ar[rrrr]_-{\Phi A\ox' \Phi B}&&&&
T'FA\ox' T'FB\ar[rrr]_-{T'_2}&&&
T'(FA\ox' FB)&&
FTI\ar[rrr]_-{\Phi I}&&&
T'F I \ar[rrr]_-{T' F_0} &&&
T' I',}
\end{equation}
for any objects $A,B$ of $\cat{C}$.

A monoidal comonad $\textbf{T}$ is said to be a {\em right Hopf comonad}
whenever for any objects $A$ and $B$,
$$
\xymatrix@C=30pt{
TA\ox TB \ar[r]^-{\delta_A \ox TB}&
T^2A\ox TB\ar[r]^-{T_2}&
T(TA\ox B)}
$$
is an isomorphism (natural in the objects $A$ and $B$). 

\subsection{Separable Frobenius algebras}
Let $k$ be a field and $R$ be an algebra over $k$. Recall that $R$ is said to
be a \emph{Frobenius algebra} if there exists a Frobenius structure $(\psi,
e)$ consisting of a $k$--linear map $\psi : R \to k$ --- called the {\em
Frobenius functional} --- and an element $e=\sum_i e_i \ox f_i \in R\ox
R$ (the summation symbol will be omitted for brevity) --- called the
{\em Frobenius element} --- such that 
\begin{equation}
r=\psi(re_i)f_i = e_i\psi(f_ir)\qquad \forall r \in R.\label{eq:Frob-1}
\end{equation}
A Frobenius algebra $R$ is necessarily finite dimensional with finite
dual basis $e_i\ox \psi(f_i-)\in R \ox \mathsf{Hom}(R,k)$.

Any Frobenius algebra $R$ possesses a coalgebra structure with
$R$--bilinear coassociative comultiplication $\delta:R\rightarrow R\ox R$, 
$r \mapsto re=er$ and counit $\psi:R\rightarrow k$. Moreover, the
category of right (respectively left) $R$--modules is isomorphic to the
category of right (respectively left) $R$--comodules (see for example
\cite{Abrams}). Thus, each right $R$--module $M$ is endowed with a
right $R$--comodule structure via the coaction $m \mapsto me_i \ox
f_i$. Conversely, any $R$--comodule $m\mapsto m_0\ox m_1$ is an $R$--module
via the action $m\ox r\mapsto m_0\psi(m_1r)$. 

For any Frobenius algebra $R$ there is a unique algebra automorphism $\theta :
R \to R$, the so-called {\em Nakayama automorphism}, obeying 
\begin{equation}
\psi(sr) = \psi(\theta(r)s)\qquad \forall r,s \in R.\label{eq:Frob0}
\end{equation}
From \eqref{eq:Frob-1} and \eqref{eq:Frob0} we get the explicit forms 
\begin{equation}\label{Nakayamaexplicitform}
 \theta(r)=\psi(e_ir)f_i, \qquad \theta^{-1}(r)=\psi(rf_i)e_i
\end{equation}
 of the Nakayama automorphism and its inverse, 
and the following identities for all $r\in R$.
\begin{eqnarray}
\label{eq:Frob1} 
re_i \ox f_i &=& e_i \ox f_ir\\
\label{eq:Frob2} 
e_ir \ox f_i&=&e_i\ox\theta(r) f_i\\
\label{eq:Frob3}
 e_i \ox \theta(f_i) & = 
 & \theta^{-1}(e_i) \ox f_i\\
\label{eq:Frob4}
 \theta(e_i)\ox f_i = & f_i\ox e_i & = 
 e_i \ox \theta^{-1}(f_i)
\end{eqnarray}

An algebra $R$ is \emph{separable} if its multiplication is a split
epimorphism of $R$--bimodules (see for example
\cite{IngDeMey}). Equiva\-lently, there exists an element $\sum_i a_i\otimes 
b_i\in R\ox R$, called a separability structure for $R$, such that (omitting
again the summation symbol) 
\begin{equation}\label{separability}
ra_i \ox b_i = a_i \ox b_ir \quad \forall r\in R \quad \text{and} 
\qquad a_ib_i=1.
\end{equation}
In this case the element 
$\sum_i a_i\ox b_i$ is an idempotent in the enveloping algebra 
$R^e:=R\ox R^{op}$, that is, 
\begin{equation}\label{idempotencyFrob}
a_ia_j\ox b_jb_i=a_i\ox b_i. 
\end{equation}

We say that $R$ is a \emph{separable Frobenius algebra} if there exists a
Frobenius structure $(\psi,e)$ for $R$ such that $e=\sum_i e_i \ox f_i \in
R^e$ is also a separability structure (see \cite{Szlach:2001}). In this case
the canonical epimorphism $M\ox N \twoheadrightarrow M\ox_R N$ is split by 
$$
M\ox_R N \rightarrowtail M\ox N,\qquad
m\ox_R n \mapsto \sum_i me_i\ox f_in,
$$
for any right $R$--module $M$ and any left $R$--module $N$.
It is evident that the opposite algebra $R^{op}$ and the tensor product $R\ox
S$ of separable Frobenius algebras $R, S$ are also separable Frobenius
algebras.

\subsection{Weak bialgebras}
Recall from \cite{BohmEtAll:1999} that a \emph{weak bialgebra}
$(H,\mu,\eta,\Delta,\epsilon)$ over a field $k$ is an associative unital
$k$--algebra $(H,\mu,\eta)$ and a coassociative counital $k$--coalgebra
$(H,\Delta,\epsilon)$ such that 
\begin{eqnarray}
\Delta(ab)&=&\Delta(a)\Delta(b),\label{multiplicativitycomultiplication}\\
\epsilon(ab_1)\epsilon(b_2c)=&\epsilon(abc)&=\epsilon(ab_2)\epsilon(b_1c)
\label{weakmultiplicativitycounit}\\
(\Delta(1)\ox 1)(1\ox \Delta(1))=&\Delta^2(1)&=(1\ox \Delta(1))(\Delta(1)\ox
1)
\label{weakcomultiplicativityunit}
\end{eqnarray}
for all $a,b,c \in H$.
Here (and throughout) we use a simplified version of
Heynemann-Sweedler's notation: $\Delta(a)=a_1\ox a_2$, implicit summation
understood. The axioms expressing weak multiplicativity of the counit
(that is, \eqref{weakmultiplicativitycounit}) can be 
rephrased equivalently as
\begin{equation}\label{weakmultiplicativitycounitalternativeaxioms}
\epsilon(a1_1)\epsilon(1_2c)=\epsilon(ac)=\epsilon(a1_2)\epsilon(1_1c),
\end{equation}
for all $a,c\in H$. Indeed, \eqref{weakmultiplicativitycounit} $\Rightarrow$
\eqref{weakmultiplicativitycounitalternativeaxioms} is evident. Conversely,
for any $b\in H$, 
$$
1_1\ox \Delta(1_2b)=
1_1\ox 1_2b_1\ox 1_3b_2=
1_1\ox 1_21_{1'}b_1 \ox 1_{2'}b_2=
1_1\ox 1_2b_1 \ox b_2,
$$
by the multiplicativity and the coassociativity of $\Delta$, by
\eqref{weakcomultiplicativityunit}, and by the multiplicativity of $\Delta$
again. Therefore,
$$
\epsilon(ab_1)\epsilon(b_2c)=
\epsilon(a1_1)\epsilon(1_2b_1)\epsilon(b_2c)=
\epsilon(a1_1)\epsilon((1_2b)_1)\epsilon((1_2b)_2c)=
\epsilon(a1_1)\epsilon(1_2bc)=
\epsilon(abc),
$$
where the first and the last equalities follow by
\eqref{weakmultiplicativitycounitalternativeaxioms} and the penultimate one
follows by counitality of $\Delta$. One can argue symmetrically about the
other axiom in \eqref{weakmultiplicativitycounit}.

Frequently, we will write $H$ to denote the weak bialgebra, understanding
that the structure is given. In a weak bialgebra $H$ the counital (idempotent)
maps $H\rightarrow H$ defined by the formulae 
\begin{equation}\label{sqcapR&L}
\begin{array}{ll}
 \sqcap^L(h)=\varepsilon(1_{1}h)1_{2}, \qquad&
 \sqcap^R(h)=1_{1}\varepsilon(h1_{2})\\ 
\overline{\sqcap}^L(h)=\varepsilon(h1_{1})1_{2},\qquad& 
\overline{\sqcap}^R(h)=1_{1}\varepsilon(1_{2}h)
\end{array} 
\end{equation}
play an important role. The following identities are immediate consequences of
the above definitions. 
\begin{equation}\label{piLbarpiR=piLbar}
\begin{array}{llll}
\overline{\sqcap}^L\sqcap^R=\overline{\sqcap}^L,
&
\qquad\overline{\sqcap}^R\sqcap^L=\overline{\sqcap}^R,
&
\qquad\sqcap^R\overline{\sqcap}^L=\sqcap^R,
&
\qquad\sqcap^L\overline{\sqcap}^R=\sqcap^L,
\\
\overline{\sqcap}^R\sqcap^R=\sqcap^R,
&
\qquad\overline{\sqcap}^L\sqcap^L=\sqcap^L,
&
\qquad\sqcap^R\overline{\sqcap}^R=\overline{\sqcap}^R,
&
\qquad\sqcap^L\overline{\sqcap}^L=\overline{\sqcap}^L.
\end{array}
\end{equation}

By (\cite[Proposition 2.4]{BohmEtAll:1999}), the coinciding images of
$\sqcap^R$ and $\overline{\sqcap}^R$; and the coinciding images of $\sqcap^L$
and $\overline{\sqcap}^L$ are unital subalgebras of $H$ --- they are
called the `right' and `left' subalgebra, respectively --- and they
commute with each other. That is, for all $h,h'\in H$ 
\begin{equation}\label{piRpiLcommute}
\sqcap^R(h)\sqcap^L(h')=\sqcap^L(h')\sqcap^R(h).
\end{equation}
Moreover,
\begin{equation}\label{delta1}
\Delta(1) = \sqcap^R(1_1)\ox 1_2=
1_1\ox \sqcap^L(1_2)
=\sqcap^R(1_2)\ox \sqcap^L(1_1).
\end{equation}
By (\cite[Proposition 2.11]{BohmEtAll:1999}, \cite{Szlach:2001}), they are
separable Frobenius (co)algebras with respective separability Frobenius
structures 
$$
(\epsilon_{|\sqcap^R(H)}, 1_1 \ox \sqcap^R(1_2))\qquad
\qquad\textrm{and}\qquad\qquad
(\epsilon_{|\sqcap^{ L}(H)}, \sqcap^L(1_1) \ox 1_2).
$$
Corestriction yields coalgebra maps $\sqcap^R:H\to
\sqcap^R(H)$ and $\sqcap^L:H\to \sqcap^L(H)$; symmetrically, anti-coalgebra
maps $\overline \sqcap^R:H\to \sqcap^R(H)$ and $\overline \sqcap^L:H\to
\sqcap^L(H)$. Moreover, the maps $\sqcap^{R}$ and $\overline{\sqcap}^{L}$; and
also the maps $\overline \sqcap^{R}$ and ${\sqcap}^{L}$, induce mutually 
inverse anti-algebra and anti-coalgebra iso\-morphisms between the separable
Frobenius (co)algebras $\sqcap^{R}(H)$ and ${\sqcap}^{L}(H)$, see
for example \cite[Proposition 1.18]{BohmEtAll:}. 

By (\cite[Lemma 2.5]{{BohmEtAll:1999}}), $\sqcap^R$ is a right
$\sqcap^R(H)$--module map and $\sqcap^L$ is a left $\sqcap^L(H)$--module
map. Symmetrically, $\overline{\sqcap}^R$ is a left ${\sqcap}^R(H)$--module
map and $\overline{\sqcap}^L$ is a right ${\sqcap}^L(H)$--module map. In
formulae, the following identities hold true for all $h, h'\in H$.
\begin{equation}\label{R-moduleproperties}
\begin{array}{ll}
\sqcap^R(h\sqcap^R(h'))=\sqcap^R(h)\sqcap^R(h'),
&
\qquad\sqcap^L(\sqcap^L(h)h')=\sqcap^L(h)\sqcap^L(h'),
\\
\overline{\sqcap}^R(\overline{\sqcap}^R(h)\ h')\, =
\overline{\sqcap}^R(h)\ \overline{\sqcap}^R(h'),
&
\qquad\overline{\sqcap}^L(h\overline{\sqcap}^L(h'))=
\overline{\sqcap}^L(h)\overline{\sqcap}^L(h').
\end{array}
\end{equation}
These maps also obey the so-called counital properties
\begin{equation}\label{counitalpropertiesofmaps}
h_1\sqcap^R(h_2)=\sqcap^L(h_1)h_2=\overline \sqcap^R(h_2) h_1=
h_2 \overline\sqcap^L(h_1) =h
\end{equation}
and the identities 
\begin{equation}\label{piRproduct}
\begin{array}{ll}
\sqcap^R(\sqcap^R(h)h')=\sqcap^R(hh'),
&
\qquad\sqcap^L(h\sqcap^L(h'))=\sqcap^L(hh'),
\\
\overline\sqcap^R(h\overline\sqcap^R(h'))=
\overline\sqcap^R(hh'),
&
\qquad\overline\sqcap^L(\overline\sqcap^L(h)\ h')=
\overline\sqcap^L(hh'),
\end{array}
\end{equation}
for all $h,h'\in H$. In this work we will need a few more identities from
\cite{BohmEtAll:1999}: 
\begin{equation}\label{idpiLdeltah}
\begin{array}{ll}
h_1\ox \sqcap^L(h_2)= 1_1h\ox 1_2,\qquad\ 
&
\qquad\Delta(\sqcap^L(h))=
\sqcap^L(h)1_1\ox 1_2,
\\
h_1\ox \sqcap^R(h_2)= h1_1\ox \sqcap^R(1_2),
&
\qquad \Delta(\sqcap^R(h))=
1_1\ox \sqcap^R(h)1_2,
\end{array}
\end{equation}
for all $h\in H$. 

A weak bialgebra $H$ is a \emph{weak Hopf algebra} if there exists a
$k$--linear map $S:H\rightarrow H$, called the \textit{antipode}, satisfying
the following axioms for all $h\in H$. 
\begin{equation}\label{antipodeaxioms}
h_1S(h_2)=\sqcap^L(h),\qquad 
S(h_1)h_2=\sqcap^R(h),\qquad 
S(h_1)h_2S(h_3)=S(h).
\end{equation}
By \cite[Theorem 2.10]{BohmEtAll:1999}, the antipode is anti-multiplicative
and anti-comultiplicative. That is, for all $h,h'\in H$, 
\begin{equation}\label{eq:S_a_multiplicative}
S(hh')=S(h')S(h)
\qquad\qquad\textrm{and}\qquad\qquad 
S(h)_1\ox S(h)_2=S(h_2)\ox S(h_1).
\end{equation}
By \cite[Lemma 2.9]{BohmEtAll:1999}, the following identities
hold true. 
\begin{equation}\label{piR&antipode}
\begin{array}{ll}
\sqcap^R S=\sqcap^R\sqcap^L=S\sqcap^L,\qquad
&
\overline{\sqcap}^R S=\sqcap^R=S\overline{\sqcap}^L,
\\
\sqcap^L S=\sqcap^L\sqcap^R=S\sqcap^R,\qquad
&
\overline{\sqcap}^L S=\sqcap^L=S\overline{\sqcap}^R.
\end{array}
\end{equation}
For more on weak bialgebras and weak Hopf algebras, we refer to
\cite{BohmEtAll:1999}. 

\section{Categories of bimonoids.}
\label{sec:cat_bmd}

Let $\mathsf{duo}$ denote the category whose objects are duoidal categories
and whose morphisms are functors which are comonoidal with respect to both
monoidal structures. Consider a functor $M : \mathcal S \to \mathsf{duo}$ from
an arbitrary category $\mathcal S$ to $\mathsf{duo}$. In this section we
associate a category (of some bimonoids) to $M$.

\begin{lemma}\label{lem:bmd_morphism}
Let $X$ and $X'$ be objects of $\mathcal S$ and let $H$ and $H'$ be bimonoids
in $MX$ and $MX'$, respectively.
For a morphism $q:X\to X'$ in $\mathcal S$ and a morphism $Q:(Mq)H \to H'$ in
$MX'$, the following assertions are equivalent. 
\begin{itemize}
\item[{(a)}] The functor $Mq:MX\to MX'$ and the natural transformation
$$
\xymatrix{
(Mq)(-\bullet H)\ar[r]^-{(Mq)^\bullet_2}&
(Mq)(-)\bullet\other (Mq)H\ar[rr]^-{(Mq)(-)\bullet\other Q}&&
(Mq)(-)\bullet\other H'}
$$
constitute a morphism of monoidal comonads $((MX,\circ), (-)\bullet H)\to 
((MX',\circ\other), (-)\bullet\other H')$.
\item[{(b)}] The following diagrams commute, for any objects $A,B$ of $MX$.
\smallskip

\hspace{-2.05cm}
\begin{tabular}{ll}
\tagarray\label{diag1lemma}\qquad
\raisebox{1.5cm}{\xymatrix{
(Mq)H\ar[r]^-Q\ar[d]_-{(Mq)\Delta} & 
H'\ar[dd]^-{\Delta'}\\
(Mq)(H\bullet H)\ar[d]_-{(Mq)_2^{\bullet}} &\\
(Mq)H\bullet\other (Mq)H\ar[r]_-{Q\bullet\other Q}& 
H'\bullet\other H'}}\qquad\qquad\quad&
\tagarray\label{diag2lemma}\qquad\qquad
\raisebox{1.5cm}{\xymatrix{
(Mq)H\ar[r]^-Q\ar[d]_-{(Mq)\epsilon} & 
H'\ar[dd]^-{\epsilon'}\\
(Mq)J\ar[d]_-{(Mq)_0^{\bullet}} &\\
J'\ar@{=}[r]& J'
}}
\end{tabular}

\begin{equation}\label{diag3lemma}
\raisebox{3.3cm}{\xymatrix @C=6pt @R=30pt{
(Mq)((A\bullet H)\circ (B \bullet H))\ar[r]^-{(Mq)\gamma} 
\ar[d]_-{(Mq)^\circ_2}& 
(Mq)((A\circ B)\bullet (H \circ H))\ar[d]^-{(Mq)^\bullet_2}\\ 
(Mq)(A\bullet H)\circ\other (Mq)(B \bullet H)
\ar[d]_-{(Mq)^\bullet_2 \circ\other (Mq)^\bullet_2} & 
(Mq)(A\circ B)\bullet\other (Mq)(H \circ H)
\ar[d]^-{(Mq)^\circ_2\bullet\other (Mq)(H \circ H)} \\ 
((Mq)A\bullet\other (Mq)H)\circ\other ((Mq)B \bullet\other (Mq)H)
\ar[d]_-{\gamma'} & 
((Mq)A\circ\other (Mq)B)\bullet\other (Mq)(H \circ H)
\ar[d]^-{((Mq)A\circ\other (Mq)B)\bullet\other (Mq)\mu}\\ 
((Mq)A\circ\other (Mq)B)\bullet\other ((Mq)H \circ\other (Mq)H)
\ar[d]_-{\raisebox{-10pt}{${}_{((Mq)A\circ\other (Mq)B)
\bullet\other (Q\circ\other Q)}$}}& 
((Mq)A\circ\other (Mq)B)\bullet\other (Mq)H
\ar[d]^-{((Mq)A\circ\other (Mq)B)\bullet\other Q}\\ 
((Mq)A\circ\other (Mq)B)\bullet\other (H'\circ\other H')
\ar[r]_-{\raisebox{-10pt}{${}_{((Mq)A\circ\other (Mq)B)\bullet\other \mu'}$}}& 
((Mq)A\circ\other (Mq)B)\bullet\other H'}} 
\end{equation}

\begin{equation}\label{diag4lemma}
\raisebox{2.2cm}{\xymatrix@C=30pt{
(Mq)I \ar[rr]^-{(Mq)^\circ_0}\ar[d]_-{(Mq)\Delta}&&
I'\ar[dd]^-{\Delta\other} \\
(Mq)(I \bullet I)\ar[d]_-{(Mq)^\bullet_2}\\
(Mq)I \bullet\other (Mq)I \ar[d]_-{(Mq) I \bullet\other (Mq)\eta}&& 
I'\bullet\other I'\ar[d]^-{I'\bullet\other \eta'}\\ 
(Mq)I \bullet\other (Mq)H\ar[r]_-{(Mq) I \bullet\other Q}& 
(Mq) I \bullet\other H'\ar[r]_-{(Mq)^\circ_0\bullet\other H'}&
I'\bullet\other H'}}
\end{equation}
\end{itemize}
\end{lemma}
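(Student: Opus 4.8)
The plan is to unfold condition~(a) by the definition of a morphism of monoidal comonads recalled from \cite[Definition~3.1]{Szlachanyi:2003}, and to match the resulting list of requirements one by one against the four diagrams of~(b). Write $F:=Mq\colon MX\to MX'$ and let $\Phi\colon F\big((-)\bullet H\big)\Rightarrow F(-)\bullet' H'$ denote the displayed composite $\big(F(-)\bullet' Q\big)\circ F^{\bullet}_2$. Since $M$ takes values in $\mathsf{duo}$, the functor $F$ is comonoidal with respect to $\circ$, so the ``$F$ comonoidal'' clause of the definition is automatic; and $\Phi$ is natural because it is assembled from natural transformations. Hence~(a) is equivalent to the conjunction of: $(\Phi_1)$ $\Phi$ is compatible with the counits of the comonads $(-)\bullet H$ on $(MX,\circ,I)$ and $(-)\bullet' H'$ on $(MX',\circ',I')$; $(\Phi_2)$ $\Phi$ is compatible with their comultiplications; $(\Phi_3)$ the left-hand square of~\eqref{comonadmorphism} commutes, for all $A,B$; $(\Phi_4)$ the right-hand square of~\eqref{comonadmorphism} commutes. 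I aim to identify $(\Phi_1)$ with~\eqref{diag2lemma}, $(\Phi_2)$ with~\eqref{diag1lemma}, $(\Phi_3)$ with~\eqref{diag3lemma}, and $(\Phi_4)$ with~\eqref{diag4lemma}.

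First I record, following \cite{Booker&Street}, the monoidal comonad structure of $\mathbf{T}:=(-)\bullet H$ on $(MX,\circ,I)$ in explicit form: its comultiplication at $A$ is $(\alpha^{\bullet}_{A,H,H})^{-1}\circ(A\bullet\Delta)$, its counit at $A$ is $\rho^{\bullet}_A\circ(A\bullet\epsilon)$, and its comonoidal coherence maps are $T_2=\big((A\circ B)\bullet\mu\big)\circ\gamma_{A,H,B,H}$ and $T_0=(I\bullet\eta)\circ\Delta_I$; likewise for $\mathbf{T}':=(-)\bullet' H'$ on $(MX',\circ',I')$. Substituting these, the definition of $\Phi$, and the identification $F_2=F^{\circ}_2$ into the clauses of \cite[Definition~3.1]{Szlachanyi:2003}, the two ``monoidal'' clauses unfold directly --- up to a naturality square for $F^{\bullet}_2$ each, and, in the $T_2$ clause, naturality of $\gamma'$ and bifunctoriality of $\circ'$ and $\bullet'$ --- into~\eqref{diag4lemma} and into~\eqref{diag3lemma} for each fixed pair $A,B$. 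The two ``comonad'' clauses unfold similarly, but with a twist: before simplification, $(\Phi_1)$ and $(\Phi_2)$ are equalities of natural transformations out of $F(A\bullet H)$ depending on a free object $A$, whereas~\eqref{diag2lemma} and~\eqref{diag1lemma} carry no such parameter. So the matching there splits into two halves: the implication (b)$\Rightarrow$(a) is obtained by whiskering~\eqref{diag2lemma}, respectively~\eqref{diag1lemma}, with the relevant structure cells to reinstate the $A$--slot, and (a)$\Rightarrow$(b) by specialising at the $\bullet$--unit $A=J$ and using the isomorphism $\lambda^{\bullet}_H\colon J\bullet H\xrightarrow{\ \sim\ }H$ together with the unit coherence of the comonoidal functor $F$ --- and, for $(\Phi_1)$, its counit law relating $F^{\bullet}_0$, $F^{\bullet}_2$ and $\rho^{\bullet}$ --- to strip that slot back off.

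The fiddliest step is $(\Phi_2)\Leftrightarrow\eqref{diag1lemma}$. The comonad--morphism comultiplication axiom $\delta'F\circ\Phi=(T'\Phi)(\Phi T)(F\delta)$ is an identity of natural transformations $F\big((-)\bullet H\big)\Rightarrow\big(F(-)\bullet' H'\big)\bullet' H'$; after substitution each side carries the associators $\alpha^{\bullet}$ of $(MX,\bullet,J)$ and of $(MX',\bullet',J')$ twice, and features $\Delta$ on one side and $\Delta'$ on the other. Bringing both into the shape of~\eqref{diag1lemma} requires pushing $F^{\bullet}_2$ past $A\bullet\Delta$ and past $(A\bullet H)\bullet\Delta$ by naturality and by the associativity coherence of the comonoidal functor $F$, reshuffling the associators by Mac Lane coherence, and only then carrying out the $A=J$ specialisation described above. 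I expect this --- the double associators, the comonoidal associativity of $F$, and the parameter--stripping all coming together --- to be the principal obstacle; with $(\Phi_3)\Leftrightarrow\eqref{diag3lemma}$ a close runner--up, on account of the sheer size of diagram~\eqref{diag3lemma}, although conceptually it is only a large but mechanical pasting. Once the four matchings are in place, the equivalence (a)$\Leftrightarrow$(b) is immediate.
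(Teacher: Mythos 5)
Your proposal is correct and follows the paper's overall decomposition: both identify \eqref{diag3lemma} and \eqref{diag4lemma} with the two squares of \eqref{comonadmorphism} (the paper simply observes they are identical once the structure maps of the comonads $(-)\bullet H$ and $(-)\bullet' H'$ are written out), and both obtain \eqref{diag1lemma} and \eqref{diag2lemma} from the comonad-morphism clauses by evaluating at the $\bullet$--unit $J$ and stripping the free slot via $\lambda^\bullet_H$ and the comonoidal coherence of $Mq$. The one place you diverge is the converse implication for the comonad clauses: you propose to reinstate the $A$--slot by whiskering \eqref{diag1lemma} and \eqref{diag2lemma} with the relevant structure cells and then chasing associators and the coassociativity of $(Mq)^\bullet_2$ by hand --- which is valid but is exactly the ``principal obstacle'' you anticipate. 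The paper avoids this chase by a factorization: $(Mq,(Mq)^\bullet_2)$ is a comonad morphism $(-)\bullet H\to(-)\bullet'(Mq)H$ (this packages the coassociativity, counitality and naturality of $(Mq)^\bullet_2$ once and for all), and \eqref{diag1lemma}--\eqref{diag2lemma} say precisely that $Q$ is a comonoid morphism, so that $(-)\bullet' Q$ is a comonad morphism $(-)\bullet'(Mq)H\to(-)\bullet' H'$; the composite of comonad morphisms is again one. If you want to shorten your ``fiddliest step,'' that intermediate comonad $(-)\bullet'(Mq)H$ is the device to borrow; otherwise your argument goes through as outlined.
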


\begin{proof}
Here and throughout, for brevity, we omit explicitly denoting the
associator isomorphisms.

The diagrams \eqref{diag3lemma} and \eqref{diag4lemma} are identical to
the diagrams in \eqref{comonadmorphism} for the functors $F=Mq$, $T=(-)\bullet
H$ and $T'=(-)\bullet' H'$. So we only need to show that the pair in part (a)
is a morphism of comonads if and only if \eqref{diag1lemma} and
\eqref{diag2lemma} commute.

Assume first that the functor $Mq$ and the natural transformation 
$((Mq)(-)\bullet^{\prime} Q)(Mq)_2^{\bullet}$ constitute a morphism of
comonads. This means commutativity of the diagrams
$$
\xymatrix{
(Mq)(-\bullet H)\ar[r]^-{(Mq)(-\bullet \Delta)}
\ar[dd]_-{(Mq)^\bullet_2}&
(Mq)(-\bullet H\bullet H)\ar[d]^-{(Mq)^\bullet_2}&
(Mq)(-\bullet H)\ar[r]^-{(Mq)(-\bullet \epsilon)}
\ar[dd]_-{(Mq)^\bullet_2}&
(Mq)(-\bullet J)\ar[dd]^-{(Mq)\rho^\bullet}\\
&(Mq)(-\bullet H)\bullet' (Mq)H\ar[d]^-{(Mq)(-\bullet H)\bullet' Q}\\
(Mq)(-)\bullet' (Mq)H\ar[dd]_-{(Mq)(-)\bullet' Q}&
(Mq)(-\bullet H)\bullet' H'
\ar[d]^-{(Mq)^\bullet_2\bullet' H'}&
(Mq)(-)\bullet' (Mq)H\ar[dd]_-{(Mq)(-)\bullet' Q}&
Mq\\
&(Mq)(-)\bullet' (Mq)H\bullet' H'\ar[d]^-{(Mq)(-)\bullet' Q\bullet' H'}\\
(Mq)(-)\bullet' H'\ar[r]_-{\raisebox{-8pt}{${}_{(Mq)(-)\bullet' \Delta'}$}}&
(Mq)(-)\bullet' H'\bullet' H'&
(Mq)(-)\bullet' H'\ar[r]_-{\raisebox{-8pt}{${}_{(Mq)(-)\bullet' \epsilon'}$}}&
(Mq)(-)\bullet' J' \ar[uu]_-{\rho^{\bullet'}_{Mq}}.}
$$
Evaluate the equal pairs of paths around these diagrams at the monoidal unit
$J$. Precompose the resulting morphisms with $((Mq)\lambda^\bullet_H)^{-1}$ in
both cases and postcompose them with $\lambda^{\bullet'}_{H'\bullet'
H'}((Mq)^\bullet_0 \bullet' H'\bullet'H')$ in the case of the first 
diagram and postcompose them with $(Mq)^\bullet_0$ in the case of the second
diagram. Using naturality of the morphisms $\lambda^{\bullet}$,
$\lambda^{\bullet'}$, $\rho^{\bullet'}$ and $(Mq)_2^{\bullet}$,
${\bullet}$--comonoidality of $Mq$, the identity
$\rho^\bullet_J=\lambda^\bullet_J$ (holding true in every monoidal 
category) and functoriality of the monoidal product $\bullet'$, they yield
\eqref{diag1lemma} and \eqref{diag2lemma}, respectively. The converse
implication follows by noting first that $(Mq,(Mq)^\bullet_2)$ is a comonad
morphism from the comonad $(-)\bullet H$ to $(-)\bullet'(Mq)H$ by
coassociativity, counitality and naturality of $(Mq)^\bullet_2$. Second, since
$Q:(Mq)H \to H'$ is a morphism of comonoids, $(MX',(-)\bullet' Q)$ is a
comonad morphism from the comonad $(-)\bullet' (Mq)H$ to $(-)\bullet'
H'$. Then also their composite is a comonad morphism.
\end{proof}

\begin{remark}\label{rem:double_comon}
If the functor $Mq:MX\to MX'$ is double comonoidal in the sense of
\cite[Definition 6.55]{Aguiar&Mahajan}, then the last two diagrams in 
part (b) of Lemma \ref{lem:bmd_morphism} turn out to be equivalent to
the following diagrams. 
$$
\xymatrix{
(Mq)(H\circ H)\ar[r]^-{(Mq)^\circ_2} \ar[d]_-{(Mq)\mu}&
(Mq)H\circ\other(Mq)H \ar[r]^-{Q\circ\other Q}&
H'\circ\other H'\ar[d]^-{\mu'}
&
(Mq)I \ar[r]^-{(Mq)^\circ_0}\ar[d]_-{(Mq)\eta}&
I'\ar[d]^-{\eta'}\\
(Mq)H\ar[rr]_-Q&&
H'
&
(Mq)H\ar[r]_-Q&
H'}
$$
However, in our most important example in Section \ref{sec:wba}, the functors
$Mq:MX\to MX'$ are not double comonoidal. So we need to cope with
the more general situation in Lemma \ref{lem:bmd_morphism}.
\end{remark}

\begin{definition}\label{def:mor_bmd}
Let $M$ be a functor from an arbitrary category $\mathcal S$ to the category
$\mathsf{duo}$ of duoidal categories. The associated category
$\mathsf{bmd}(M)$ is defined to have objects which are pairs, consisting of an
object $X$ of $\mathcal S$ and a bimonoid $H$ in $MX$. Morphisms are pairs
consisting of a morphism $q:X\to X'$ in $\mathcal S$ and a morphism
$Q:(Mq)H\to H'$ in $MX'$, obeying the equivalent conditions in Lemma
\ref{lem:bmd_morphism}. 
\end{definition}

The composite of any morphisms of monoidal comonads is a morphism of
monoidal comonads again, see \cite[Definition 3.3]{Szlachanyi:2003}. 
Hence the composition of morphisms in $\mathsf{bmd}(M)$ is well-defined by
their description in part (a) of Lemma \ref{lem:bmd_morphism}. 

If $\mathcal S$ is the singleton category (having only one object and its
identity morphism), then the functors $M:\mathcal S \to \mathsf{duo}$ are in
bijection with the objects of $\mathsf{duo}$; that is, with the duoidal
categories $M$. As kindly pointed out by the referee, in this case
$\mathsf{bmd}(M)$ is the usual category of bimonoids in the duoidal category
$M$: Its objects are the bimonoids and the morphisms are the morphisms in $M$
which are both morphisms of monoids (w.r.t. $\circ$) and morphisms of
comonoids (w.r.t. $\bullet$); see Remark \ref{rem:double_comon}. 

\begin{remark}\label{rem:symm}
Note that Definition \ref{def:mor_bmd} is one choice of several
symmetric possibilities. With this choice, we obtain the adjunction in
Section \ref{sec:functor_k} and Section \ref{sec:adjoint}. An analogous
definition could be based on the monoidal comonad $((MX,\circ), H\bullet
(-))$. If applied to the functor $\mathsf{span}:\mathsf{set}\to \mathsf{duo}$
in Section \ref{sec:cat}, it would lead to the same category of small
categories. If applied to the functor $\mathsf{bim}(-^e):\mathsf{sfr}\to
\mathsf{duo}$ in Section \ref{sec:wba}, however, it would result in a
different notion of morphism between weak bialgebras (related to that in
Section \ref{sec:wba} by interchanging the roles of the `left' and `right'
subalgebras). This symmetric variant of the category of weak bialgebras admits
a symmetric adjunction with the category of small categories, see also Remark
\ref{rem:symm_restored}.

As a further symmetry, one can change the notion of morphism between duoidal 
categories to functors which are monoidal with respect to both monoidal
structures. Then two symmetric variants of morphisms between
bimonoids can be defined in terms of the induced comonoidal monads $((MX,
\bullet),H\circ (-))$ and $((MX,\bullet),(-)\circ H)$. (Note that while
weak bialgebra is a self-dual structure, its morphisms in Section
\ref{sec:wba} are not. A category of weak bialgebras whose morphisms 
are dual to those in Section \ref{sec:wba} can be obtained by this dual
construction. The possibility of finding a contravariant adjunction to
the category of small categories has not been investigated in this case.) 
\end{remark}

\section{Example: $\mathsf{cat}$ as a category of bimonoids.} 
\label{sec:cat}

\begin{aussage}{\bf The category $\mathsf{span}(X)$ \cite[Example
6.17]{Aguiar&Mahajan}. } 
For any set $X$, a {\em span} over $X$ is a triple $(A,s,t)$
where $A$ is a set and $s,t:A\to X$ is a pair of maps, called the {\em source}
and {\em target} maps, respectively. 
A morphism between the spans $(A,s,t)$ and $(A',s',t')$ over $X$ is a map
$f:A\rightarrow A'$ such that the diagrams
$$
\xymatrix{
A\ar[rr]^-{f}\ar[rd]_-{s} && 
A'\ar[ld]^-{s'}\\
&X&} \qquad
\xymatrix{
A\ar[rr]^-{f}\ar[rd]_-{t} && 
A'\ar[ld]^-{t'}\\
&X&}$$
commute. For brevity, we write $A$ instead $(A,s,t)$, understanding 
that $s$ and $t$ are given. We denote by $\mathsf{span}(X)$ the category of
spans over $X$. For any spans $A$ and $B$, define the sets
\begin{eqnarray*}
A\circ B &:=&\{(a,b)\in A\times B : s(a)=t(b)\}\\
A\bullet B &:=&\{(a,b)\in A\times B: s(a)=s(b) \text{ and } t(a)=t(b)\}.
\end{eqnarray*}
We turn $A\circ B$ and $A\bullet B$ into spans over $X$ by
defining, for $(a,b)\in A\circ B$, 
\begin{equation*}
 s(a,b):=s(b)\qquad \text{and} \qquad t(a,b):=t(a),
\end{equation*}
and for $(a,b)\in A\bullet B$,
\begin{equation*}
 s(a,b):=s(a)=s(b) \qquad \text{and}\qquad t(a,b):=t(a)=t(b).
\end{equation*}
Each one of these operations is functorial and endows the
category $\mathsf{span}(X)$ with a monoidal structure, with the obvious
associators. The unit object $I$ of $(\mathsf{span}(X),\circ)$ is the discrete
span $(X,\mathsf{id},\mathsf{id})$ and the unit object $J$ of
$(\mathsf{span}(X),\bullet)$ is the complete span $(X\times X, p_1,p_2)$ with
$p_1(x,y)=x$ and $p_2(x,y)=y$. Furthermore, $(\mathsf{span}(X),
\circ,I,\bullet,J)$ is a duoidal category with the structure below. Let
$A,B,C,D$ be spans over $X$. The interchange law 
$$
\gamma_{A,B,C,D}:(A\bullet B)\circ (C\bullet D)\rightarrow (A\circ C)\bullet
(B\circ D)
$$
simply sends $(a,b,c,d)$ to $(a,c,b,d)$.
The structure map $\Delta_{I}:I\rightarrow I\bullet I$ is the identity and
$\mu_J:J\circ J\rightarrow J$ and $\tau:I\rightarrow J$ are uniquely
determined since the object $J$ is terminal in the category
$\mathsf{span}(X)$. 

A bimonoid in the duoidal category $\mathsf{span}(X)$ is, equivalently, a
small category (see \cite[Example 6.43]{Aguiar&Mahajan}).
\end{aussage} 

Consider the following functor $\mathsf{span}$ from the category
$\mathsf{set}$ of (small) sets to $\mathsf{duo}$. It sends
a set $X$ to the duoidal category $\mathsf{span}(X)$ above. Regarding its
action on a map of sets $q:X\to X'$, note that $q$ induces a morphism
$\mathsf{span}(q)$ in $\mathsf{duo}$ from $\mathsf{span}(X)$ to
$\mathsf{span}(X')$: The functor $\mathsf{span}(q)$ takes an object
$t:X\leftarrow A \rightarrow X:s$ to $qt:X'\leftarrow A 
\rightarrow X':qs$ and it acts on the morphisms as the identity map. It
is easily seen to be comonoidal with respect to both monoidal structures
$\circ$ and $\bullet$, via
$$
\begin{array}{lcl}
\mathsf{span}(q)_2^{\circ}:A\circ B\to A\circ' B,\quad
&(a,b)&\mapsto (a,b)\\
\mathsf{span}(q)_0^{\circ}:X\to X', \quad 
&x&\mapsto q(x)\\
\mathsf{span}(q)_2^{\bullet}:A\bullet B\to A\bullet' B,\quad
&(a,b)&\mapsto (a,b)\\
\mathsf{span}(q)_0^{\bullet}:X\times X\to X'\times X' \quad 
&(x,y)&\mapsto (q(x),q(y)).
\end{array}
$$

The final aim of this section is to prove the following. 

\begin{theorem}
The category $\mathsf{bmd}(\mathsf{span})$ is isomorphic to the category of
small categories. 
\end{theorem}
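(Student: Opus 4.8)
The plan is to exhibit an explicit isomorphism of categories between $\mathsf{bmd}(\mathsf{span})$ and $\mathsf{cat}$, constructed on objects and morphisms separately, and then to check functoriality and mutual inverseness. On objects this is essentially given already: by \cite[Example~6.43]{Aguiar&Mahajan} (recalled above), a bimonoid in $\mathsf{span}(X)$ is precisely a small category with object set $X$, so a pair $(X, H)$ with $H$ a bimonoid in $\mathsf{span}(X)$ corresponds bijectively to a small category with (finitely many) objects. The correspondence sends $H=(H,s,t)$ to the category with object set $X$, morphism set $H$, source/target maps $s,t$, composition the monoid multiplication $\mu: H\circ H \to H$ (which is exactly a composition of composable arrows), and identities the monoid unit $\eta: I=(X,\mathrm{id},\mathrm{id})\to H$ (picking out an identity arrow at each object); the comonoid structure $\Delta, \epsilon$ on $(H,\bullet,J)$ carries no extra data since $J$ is terminal and $\Delta$ is forced to be the diagonal, which is why the bimonoid axioms reduce exactly to the category axioms.

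Next I would treat morphisms. A morphism in $\mathsf{bmd}(\mathsf{span})$ from $(X,H)$ to $(X',H')$ is a pair $(q:X\to X', Q:(\mathsf{span}(q))H\to H')$ satisfying the equivalent conditions of Lemma~\ref{lem:bmd_morphism}. Here $(\mathsf{span}(q))H$ is the set $H$ with source/target maps $qs, qt$, so $Q$ is just a map of sets $H\to H'$ compatible with source and target via $q$ — i.e.\ the data of a functor between the corresponding categories on underlying graphs. It then remains to show that conditions (b) of Lemma~\ref{lem:bmd_morphism} — diagrams \eqref{diag1lemma}--\eqref{diag4lemma} — unwind, in $\mathsf{span}$, to exactly the two functoriality equations $Q(\mu(a,b))=\mu'(Qa,Qb)$ (preservation of composition) and $Q\eta=\eta'$ (preservation of identities). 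Since in $\mathsf{span}(X)$ all the comonoidal structure maps $\mathsf{span}(q)_2^\circ, \mathsf{span}(q)_2^\bullet$ are identities on elements and $\mathsf{span}(q)_0^\circ=q$, and since $\Delta,\epsilon,\Delta_I$ are all forced by terminality of $J$, diagrams \eqref{diag1lemma}, \eqref{diag2lemma} become automatic, \eqref{diag3lemma} collapses to multiplicativity of $Q$ (chasing $(a,b)$ with $s(a)=t(b)$ through both sides, using that $\gamma$ just permutes coordinates), and \eqref{diag4lemma} collapses to $Q\eta=\eta'$. I would also note that $\mathsf{span}(q)$, though not double comonoidal in general, makes Remark~\ref{rem:double_comon} inapplicable, so one genuinely verifies \eqref{diag3lemma} and \eqref{diag4lemma} directly; this is the one spot requiring care rather than a one-line observation.

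Finally I would check that this assignment is a functor: identities go to identities (take $q=\mathrm{id}_X$, $Q=\mathrm{id}_H$), and composites go to composites, which follows because composition of morphisms in $\mathsf{bmd}(M)$ is defined via composition of monoidal-comonad morphisms (as recalled after Definition~\ref{def:mor_bmd}) and this matches composition of functors under the object-level dictionary. The inverse functor sends a small category to the pair $(X,H)$ with $X$ its object set and $H$ the span of arrows with the induced bimonoid structure, and sends a functor $F$ to the pair $(\text{object part of }F, \text{arrow part of }F)$; the two round-trips are visibly the identity. The main obstacle I anticipate is purely bookkeeping: carefully unwinding the large diagram \eqref{diag3lemma} in the category of spans and confirming it says nothing more than $Q\mu = \mu'(Q\circ Q)$ — the diagram is imposing because of the many comonad-coherence layers, but in $\mathsf{span}$ each layer acts as (a relabeling of) the identity on elements, so the element-chase is short once set up correctly. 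No deep idea is needed beyond the object-level result of Aguiar--Mahajan; the content is in verifying that the Szlach\'anyi-style notion of monoidal-comonad morphism, specialized to spans, reproduces exactly the naive notion of functor.
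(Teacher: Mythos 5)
Your plan is correct and follows essentially the same route as the paper: objects are identified with small categories via the Aguiar--Mahajan result (the $\bullet$-comonoid structure being forced), and the four diagrams of Lemma \ref{lem:bmd_morphism} are unwound elementwise so that \eqref{diag1lemma} and \eqref{diag2lemma} carry no extra information beyond $Q$ being a span morphism over $q$, while \eqref{diag3lemma} and \eqref{diag4lemma} become preservation of composition and of identities. The remaining checks you defer (the element chase through $\gamma$ and the functoriality of the correspondence) are exactly the routine verifications the paper also leaves implicit.
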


\begin{proof}
Since there is exactly one comonoid structure (the `diagonal' one) on
any object $(A,s,t)$ of $(\mathsf{span}(X),\bullet,J)$, it follows that
objects in $\mathsf{bmd(span)}$ are pairs $(X,A)$ of a set $X$ and a monoid
$A$ in $\mathsf{span}(X)$ or, equivalently, a small category $A$ with
object set $X$, see \cite{Aguiar&Mahajan}. 

The morphisms in $\mathsf{bmd(span)}$ are pairs $(q:X\rightarrow 
X',Q:A\rightarrow A')$ of maps for which $qs=s'Q$, $qt=t'Q$ and 
which render commutative 
the four diagrams in Lemma \ref{lem:bmd_morphism}. 
Evaluating these diagrams on elements of the appropriate set, we see that
\eqref{diag1lemma} commutes for any maps $(q:X\rightarrow X',Q:A\rightarrow
A')$; \eqref{diag2lemma} commutes if and only if $Q$ is a morphism of spans,
\eqref{diag3lemma} commutes if and only if $Q$ preserves composition; and 
\eqref{diag4lemma} commutes if and only if $Q$ preserves identity
morphisms. Shortly, these diagrams commute if and only if there is a functor
with object map $q$ and morphism map $Q$.
\end{proof}

Applying the above construction to the restriction of the functor
$\mathsf{span}$ to the full subcategory of finite sets in $\mathsf{set}$, we
obtain the full subcategory $\mathsf{cat}$ of small categories with finitely
many objects.

\section{Example: $\cat{wba}$ as a category of bimonoids.}
\label{sec:wba}

Let $\mathsf{sfr}$ denote the category whose objects are
separable Frobenius (co)algebras (over a given base field $k$), and whose
morphisms are defined as follows. Given separable Frobenius algebras $R$ and
$R'$ with respective Nakayama automorphisms $\theta$ and $\theta'$, a morphism
in $\mathsf{sfr}$ from $R$ to $R'$ is a coalgebra map $f: R \to R'$ such that
$\theta' f = f \theta$. In what follows we construct a functor
$\mathsf{bim}(-{}^e)$ from $\mathsf{sfr}$ to $\mathsf{duo}$. 

In the monoidal category of $R^e:=R\ox R^{op}$--bimodules, we will not 
explicitly denote the associator constraints. However, since we work
simultaneously with various $R^e$--actions, the corresponding unit constraints
will be written out for an easier reading. 

\begin{aussage}\textbf{The first monoidal structure.}
Let $R$ be an object in $\cat{sfr}$ with idempotent Frobenius element $e_i\ox
f_i$, Frobenius functional $\psi:R\rightarrow k$ and Nakayama automorphism
$\theta: R\rightarrow R$. The category $\mathsf{bim}(R^e)$ of $R^e$--bimodules
is monoidal via the monoidal product $ \circ = \ox_{R^e}$, and unit $I = R^e$
with the $R^e$--bimodule structure given by its multiplication as a
$k$--algebra; that is, with the actions 
\begin{equation}\label{eq:I_left}
(s\ox r)(x\ox y)=sx\ox yr\qquad
\textrm{and} \qquad
(x\ox y)(s\ox r)=xs\ox ry.
\end{equation}
Given $R^e$--bimodules $M$ and $N$, the unit constraints are
\begin{eqnarray*}
& \lambda_M^{\circ}: I\circ M\rightarrow M,& 
\qquad (x\otimes y)\circ m\mapsto(x\otimes y)m\\
&\rho_M^{\circ}:M\circ I\rightarrow M,& 
\qquad m\circ (x\otimes y)\mapsto m(x\otimes y).
\end{eqnarray*}
The product $M\circ N$ is an $R^e$--bimodule via the actions
\begin{equation}\label{eq:circ_bimod}
(s\ox r)(m\circ n)=(s\ox r)m\circ n\qquad \textrm{and}\qquad 
(m\circ n)(s\ox r)=m\circ n(s\ox r).
\end{equation}
The canonical $R^e$--bimodule epimorphism 
\begin{equation*}
\pi^{\circ}_{M,N}: M\ox N\rightarrow M\circ N, \qquad m\ox n\mapsto m\circ n
\end{equation*}
is split by 
\begin{equation*}
\iota^{\circ}_{M, N}: M\circ N\rightarrow M\ox N, \qquad m\circ
 n\mapsto m(e_i\ox f_j)\ox (f_i\ox e_j)n. 
\end{equation*}
Thus, $M\circ N$ is isomorphic to the vector subspace (in fact
$R^e$--subbimodule) 
$$
\iota^{\circ}_{M,N}(M\circ N)=M(e_i\ox f_j)\ox (f_i\ox e_j)N
$$
of $M\otimes N$. Alternatively, 
$$
M\circ N\cong
\{\ x\in M\ox N\ : \ \iota^{\circ}_{M,N}\pi^{\circ}_{M,N}x=x\ \}. 
$$

Recall from \cite[Lemma 2.2]{Bohm:2009} that the monoids in this
monoidal category $(\mathsf{bim}(R^e),\circ,I)$ can be identified with pairs
consisting of a $k$--algebra $A$ and a $k$--algebra homomorphism $\eta:R^e\to
A$. Via this identification, the morphisms of monoids correspond to
$k$--algebra homomorphisms $f:A\to A'$ such that $f\eta=\eta'$.

Let $H$ be a weak bialgebra over a field. Then its `right' subalgebra
$R:=\sqcap^R(H)$ is a separable Frobenius algebra and there is an algebra
homomorphism $R^e\to H$, $s\ox r\mapsto s\overline \sqcap^L(r)$. Thus $H$ is
in particular a monoid in $(\mathsf{bim}(R^e),\circ,I)$.
\end{aussage}

\begin{aussage}\textbf{The second monoidal structure.}
As we have seen in the previous paragraph, the multiplication of a weak
bialgebra $H$, with base algebra $R:=\sqcap^R(H)$, is $R^e$--balanced with
respect to the $R^e$--actions $(s\ox r)h(s'\ox r')=s\overline
\sqcap^L(r)hs'\overline \sqcap^L(r')$ on $H$. The comultiplication of $H$
factorizes through another $R^e$--module tensor product with respect to the
twisted $R^e$--actions $(s\ox r)\cdot h\cdot (s'\ox r')= r'\sqcap^L(r)h
s'\overline \sqcap^L(s)$ on $H$ (note the occurrence of the Nakayama
automorphism $\sqcap^R\sqcap^L$ of $R$ in $\sqcap^L=\overline
\sqcap^L\sqcap^R\sqcap^L$). 

By this motivation, for any separable Frobenius algebra $R$, 
define an automorphism functor $F:\mathsf{bim}(R^e)\rightarrow
\mathsf{bim}(R^e)$ as follows. For any $R^e$--bimodule $M$, the underlying
vector space of $F(M)$ is $M$ endowed with the $R^e$--actions 
\begin{equation}
(s\ox r)\·m = (1\ox \theta(r))m(1\ox s) \label{eq:dot_left}\qquad m\·(s\ox r)
= (r\ox 1)m(s\ox 1). 
\end{equation}
This gives the object map of the functor $F$. On morphisms $F$ acts as the
identity map. The (strict) inverse of $F$ sends an $R^e$--bimodule $M$ to the
$R^e$--bimodule whose underlying vector space is $M$, and whose actions are
$$
(s\ox r)\cdotdot m = (1\ox \theta^{-1}(r))m(1\ox s)
\qquad
m\cdotdot (s\ox r) = (r\ox 1)m(s\ox 1), 
$$
where juxtaposition denotes the original untwisted actions. 

Any automorphism functor on a monoidal category can be used to twist the
monoidal structure. In particular, we can use the above functor $F$ to twist
the monoidal category $(\mathsf{bim}(R^e), \circ, I)$ to a new monoidal 
category $(\mathsf{bim}(R^e), \bullet, J)$. The new monoidal product and unit
are 
$$
M\bullet N=F^{-1}(F(M)\circ F(N))\qquad \textrm{and} \qquad J=F^{-1}(I).
$$
The underlying vector space of the $R^e$--bimodule $M\bullet N$ is the tensor
product $F(M)\ox_{R^e} F(N)$; that is, the factor space of
$M\ox N$ with respect to the relations 
\begin{equation}\label{eq:bullet_quotient}
 \{(r\ox 1)m(s\ox 1)\ox n-m\ox (1\ox \theta(r))n(1\ox s)\}.
\end{equation}
The $R^e$--bimodule structure of $M\bullet N$ comes out as 
\begin{equation}\label{eq:bullet_left}
(s\ox r)\cdotdot (m\bullet n)=
(1\ox \theta^{-1}(r))\cdot m\bullet n \cdot (1\ox s)=
(1\ox r)m\bullet (s\ox 1)n
\end{equation}
and, analogously,
\begin{equation}\label{eq:bullet_right}
(m\bullet n)\cdotdot (s\ox r) =m(1\ox r) \bullet n(s\ox 1).\hspace{1cm}
\end{equation}
The $R^e$--bimodule structure of $J=R^e$ is given by the actions 
\begin{equation}\label{eq:LbimodJ}
(s \ox r)\cdotdot (x \ox y) =
x \ox sy\theta^{-1}(r)
\quad \textrm{and}\quad
(x \ox y) \cdotdot (s \ox r) =
rxs \ox y.
\end{equation}
The left and right unit constraints for the monoidal product $\bullet$ are
given by
\begin{equation}\label{eq:lambda_rho_bullet}
\begin{array}{ll}
\lambda_M^{\bullet}:J\bullet M\rightarrow M,
& \qquad (x\otimes y)\bullet m\mapsto 
(x\ox y)\cdot m = 
(1\otimes \theta(y))m(1\otimes x)\\ 
\rho_M^{\bullet}:M\bullet J\rightarrow M,
& \qquad m\bullet (x\otimes y)\mapsto 
m\cdot (x\ox y) = 
(y\otimes 1)m(x\otimes 1).
\end{array}
\end{equation}
The canonical epimorphism
\begin{equation*}
\pi^{\bullet}_{M,N}: M\ox N\rightarrow M\bullet N, \qquad m\ox n\mapsto
m\bullet n 
\end{equation*}
is a homomorphism of $R^e$--bimodules if 
$M \ox N$ is considered as the bimodule $F^{-1}(F(M) \ox F(N))$.
Since $R^e$ is a separable Frobenius algebra, $\pi^{\bullet}_{M,N}$
admits an $R^e$--bimodule section 
\begin{equation}\label{eq:iota_bullet}
 \iota^{\bullet}_{M, N}: M\bullet N\rightarrow M\ox N, \quad
 m\bullet n\mapsto 
m\cdot (e_j\ox f_i)\ox (f_j\ox e_i)\cdot n = 
(e_i\ox 1)m(e_j\ox 1)\ox (1\ox f_i)n(1\ox f_j).\\ 
\end{equation} 
Thus $M\bullet N$ is isomorphic to the $R^e$--subbimodule 
$$
\iota^{\bullet}_{M,N}(M\bullet N)=(e_i\ox 1)M(e_j\ox 1)\ox (1\ox
f_i)N(1\ox f_j) 
$$
of $M \ox N$ (with the aforementioned structure). Alternatively, 
$$
 M\bullet N\cong\{\ x\in M\ox N\ : \
 \iota^{\bullet}_{M,N}\pi^{\bullet}_{M,N}(x)=x\ \}. 
$$
Note that by \eqref{eq:lambda_rho_bullet}, \eqref{eq:iota_bullet},
\eqref{eq:LbimodJ}, \eqref{Nakayamaexplicitform}, \eqref{eq:Frob4} and
\eqref{eq:Frob-1}, the following diagrams commute, for any $R^e$--bimodule $M$.
\begin{equation}\label{eq:lambda_rho_iota}
\raisebox{.8cm}{\xymatrix@C=40pt{
J\bullet M\ar[r]^-{\lambda^\bullet_M}\ar[d]_-{\iota^\bullet_{J,M}}&
M\ar@{=}[d]&&
M\bullet J\ar[r]^-{\rho^\bullet_M}\ar[d]_-{\iota^\bullet_{M,J}}&
M\ar@{=}[d]\\
J\ox M \ar[r]_-{\psi\ox \psi \ox M}&
M&&
M\ox J \ar[r]_-{M\ox \psi\ox \psi}&
M}}
\end{equation}
\end{aussage}

\begin{theorem}\label{thm:bim_obj}
$(\mathsf{bim}(R^e), \circ,I,\bullet,J)$ possesses the structure of a duoidal
category. 
\end{theorem}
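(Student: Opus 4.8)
The plan is to verify, one family at a time, that the data exhibited in the two ``monoidal structure'' paragraphs — the product $\circ=\ox_{R^e}$ with unit $I=R^e$, the twisted product $\bullet=F^{-1}(F(-)\circ F(-))$ with unit $J=F^{-1}(I)$, the interchange law $\gamma$, and the three coherence morphisms $\mu_J$, $\Delta_I$, $\tau$ — satisfy the axioms listed in the definition of a duoidal category. First I would pin down the coherence morphisms explicitly: $\Delta_I:I\to I\bullet I$ and $\tau:I\to J$ should be induced from the multiplication/unit of $R^e$ much as in the bialgebroid example \cite[Example 6.44]{Aguiar&Mahajan}, while $\mu_J:J\circ J\to J$ is induced from multiplication of $R^e$ read through $F^{-1}$; and $\gamma$ is the composite transporting $(F(A)\circ F(B))\bullet\cdots$-type data, which on representing tensors should be a ``shuffle'' reorganizing the four $R^e$-factors, exactly parallel to $(a,b,c,d)\mapsto(a,c,b,d)$ in the span example. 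Since $F$ is a strict monoidal automorphism-up-to-the-twist, a cleaner route is available: I would observe that $(\mathsf{bim}(R^e),\circ,I)$ already carries a known duoidal (indeed braided, since $R^e$ is commutative-enough in the relevant sense? — no, $R^{op}$ spoils commutativity, so this needs care) structure in the central-base case, and that $\bullet$ is obtained by Aguiar--Mahajan's general twisting of a duoidal structure by a monoidal automorphism functor. If that abstract twisting lemma applies verbatim, the axioms for $(\circ,\bullet)$ follow formally from those for the untwisted pair.

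Concretely, the steps I would carry out are: (1) check compatibility of units — that $(J,\mu_J,\tau)$ is a $\circ$-monoid and $(I,\Delta_I,\tau)$ is a $\bullet$-comonoid — which reduces to associativity/unitality of multiplication in $R^e$ after unwinding the $F^{-1}$-twist on $J$ via \eqref{eq:LbimodJ}; (2) check the two associativity pentagon-type diagrams \eqref{diag:1associativity}, \eqref{diag:2associativity} for $\gamma$, which on representatives amount to ``two ways of shuffling six $R^e$-labelled slots agree'', a purely combinatorial identity once one tracks how the idempotent $e_i\ox f_i$ and the Nakayama automorphism enter through $\iota^\circ$, $\iota^\bullet$; and (3) verify the four unitality diagrams \eqref{diag:unit1}--\eqref{diag:unit4}, where \eqref{eq:lambda_rho_iota} is the key input telling us $\lambda^\bullet,\rho^\bullet$ collapse to applying the counit $\psi\ox\psi$. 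Throughout I would work with the concrete realization $M\circ N\cong\iota^\circ(M\circ N)\subseteq M\ox N$ and $M\bullet N\cong\iota^\bullet(M\bullet N)\subseteq M\ox N$, so that all maps become honest linear maps between subspaces of ordinary tensor products and all checks become element-chasing modulo the Frobenius identities \eqref{eq:Frob1}--\eqref{eq:Frob4}, \eqref{separability}, \eqref{idempotencyFrob}, \eqref{Nakayamaexplicitform} already recorded in Section \ref{sec:prelims}.

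The main obstacle I anticipate is bookkeeping of the two different $R^e$-module structures and the Nakayama automorphism $\theta$: because $\bullet$ is a \emph{twisted} tensor product, the factor-space relations \eqref{eq:bullet_quotient} mix a $\theta$ into the balancing, so the interchange law $\gamma$ is not the naive flip — reassociating $((A\bullet B)\circ(C\bullet D))$ versus $((A\circ C)\bullet(B\circ D))$ forces $\theta$'s to travel across tensor factors, and the associativity diagrams will only close because of the precise interplay \eqref{eq:Frob2}, \eqref{eq:Frob3} between $\theta$ and the Frobenius element. In particular, showing that $\gamma$ is well-defined on the quotient/subspace descriptions (i.e.\ compatible with both the $\circ$-balancing over $R^e$ and the twisted $\bullet$-balancing) is where the separable-Frobenius hypothesis is genuinely used, via the splitness of $\pi^\circ$ and $\pi^\bullet$ and the idempotency \eqref{idempotencyFrob}. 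I would do this one diagram, and then remark that the remaining diagrams are ``entirely analogous'' or dual, rather than grinding through each, since the computations are long but mechanical once the well-definedness of $\gamma$ is secured and the unit constraints are identified via \eqref{eq:lambda_rho_iota}.
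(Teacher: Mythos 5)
Your overall strategy coincides with the paper's: write $M\circ N$ and $M\bullet N$ as split sub-bimodules of $M\ox N$ via $\iota^\circ$, $\iota^\bullet$, exhibit $\gamma$, $\mu_J$, $\Delta_I$, $\tau$ explicitly, prove well-definedness (balancedness) using the separability idempotent, and then element-chase the compatibility, associativity and unitality diagrams with \eqref{eq:lambda_rho_iota} and the Frobenius identities. Two points, however, need to be flagged. First, the ``cleaner route'' you float --- deducing the duoidal axioms from an abstract lemma about twisting by the automorphism $F$ --- does not exist in the form you would need: twisting $\circ$ by $F$ produces the \emph{monoidal} structure $(\bullet,J)$, but it supplies no interchange law between $\circ$ and $\bullet$; that would require the untwisted pair $(\circ,\circ)$ to already be duoidal, i.e.\ the category to be braided, which (as you yourself observe) it is not. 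So the concrete verification cannot be bypassed.

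Second, and more seriously, the one piece of data on which the whole proof hinges is never produced: the formula for $\gamma$. Your first guess (``exactly parallel to $(a,b,c,d)\mapsto(a,c,b,d)$'') is wrong --- the naive flip $(a\bullet b)\circ(c\bullet d)\mapsto(a\circ c)\bullet(b\circ d)$ is \emph{not} well-defined, because the $\bullet$-balancing relation \eqref{eq:bullet_quotient} on $A\bullet B$ moves $s\ox 1$ as a \emph{right} action off $a$ onto $b$, whereas after the flip the $\circ$-balancing in $A\circ C$ converts a right action on $a$ into a \emph{left} action on $c$, and these do not match. You correctly sense this (``$\gamma$ is not the naive flip'') but do not resolve it. The resolution is the insertion of the separability idempotent: $\gamma((a\bullet b)\circ(c\bullet d))=(a(e_i\ox 1)\circ c)\bullet(b\circ(1\ox f_i)d)$ as in \eqref{bim(Re)interchangelaw}, and it is precisely this insertion (together with the Frobenius element appearing in $\tau(x\ox y)=yf_i\ox xe_i$ and in $\mu_J$) that makes the balancedness computations and the unit diagrams close. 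Until that formula is written down and its $R^e$-balancedness in all three tensor products is checked, the plan cannot be executed; with it, the remaining diagram chases go through exactly as you describe.
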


\begin{proof}
Given $R^e$--bimodules $A, B, C, D$, we define the interchange law
\eqref{duoidalinterchangelaw} by 
\begin{equation}\label{bim(Re)interchangelaw}
\gamma ((a \bullet b) \circ (c \bullet d)) = (a(e_i \ox 1) \circ c) \bullet (b
\circ (1 \ox f_i)d) 
\end{equation}
and the morphisms in \eqref{duoidalmorphisms} by
\begin{equation}\label{bim(Re)morphisms}
\begin{array}{rlll}
 \tau&:& I\rightarrow J, & (x\otimes y)\mapsto (y f_i\otimes x e_i)\\
\mu_{J}&:& J \circ J\rightarrow J, & (x\otimes y)\circ(p\otimes q)\mapsto
\psi(xq)p\otimes y\\ 
\Delta_{I}&:& I\rightarrow I\bullet I, & (x\otimes y)\mapsto (1\otimes
y)\bullet (x\otimes 1).
\end{array}
\end{equation}

In order to show that $\gamma$ is well defined, we should check
that the map 
$$
\widetilde{\gamma}:
A \ox B \ox C \ox D \to
(A \circ C) \bullet (B \circ D), \qquad
a \ox b \ox c \ox d \mapsto
(a(e_i \ox 1) \circ c) \bullet (b \circ (1 \ox f_i)d)
$$
is $R^e$--balanced in all of the three occurring tensor products. 
This follows by computations of the kind 
\begin{eqnarray*}
\widetilde{\gamma}[a \cdot (
1\ox r) \ox b \ox c \ox d] & = & 
\left[(r \ox 1)a
(e_i \ox 1) \circ c\right] \bullet [b \circ (1 \ox f_i)d]\\ 
& = & [(a(e_i \ox 1) \circ c) \cdot (1 \ox r)] \bullet [b
\circ (1 \ox f_i)d] \\ 
& = & [a(e_i \ox 1) \circ c] \bullet [(1 \ox r) \cdot (b
\circ (1 \ox f_i)d)] \\ 
& = & [a(e_i \ox 1) \circ c] \bullet [(1 \ox \theta(r))b
\circ (1 \ox f_i)d] \\ 
& = & \widetilde{\gamma}[a \ox (
1\ox r)\cdot b \ox c \ox d],
\end{eqnarray*}
and similarly in the other five cases. By similar steps one can also see that
$\mu_J$ is well defined and that $\gamma$, $\tau$, $\mu_J$ and $\Delta_I$ are
morphisms of $R^e$--bimodules. For instance, let us show that $\tau$ is a
morphism of left $R^e$--modules; the compatibilities of the other maps with
the respective $R^e$--actions are checked similarly. 
\begin{eqnarray*}
(s\ox r)\cdotdot \tau(x \ox y) & = & 
(s \ox r) \cdotdot(yf_i \ox xe_i) 
\stackrel{\eqref{eq:LbimodJ}}= 
yf_i \ox sxe_i\theta^{-1}(r) \\
& \stackrel{\eqref{eq:Frob2}}= & 
yrf_i \ox sxe_i = \tau(sx \ox yr) = \tau\left((s \ox r)(x\ox y)\right)
\end{eqnarray*}

We turn to checking the compatibility between both monoidal structures. This
amounts to showing that the just defined maps satisfy the associativity,
unitality and compatibility of units conditions from Section
\ref{duoidalcatdefinition}.  
The computations are fairly straightforward so we only illustrate them on some
chosen examples. For example, coassociativity of $\Delta_I$ and associativity
of $\mu_J$ are obvious. The (left) counitality of $\Delta_I$ and the (left)
unitality of $\mu_J$ are checked by the computations 
$$
\xymatrix@C=38pt{
x\ox y \ar@{|->}[r]^-{\Delta_I}\ar@{=}[d]_(.4){\eqref{eq:Frob4}}
_(.6){\eqref{separability}}&
(1\ox y)\bullet (x\ox 1)\ar@{|->}[d]^-{\tau\bullet \mathsf{id}}&
(x\ox y)\circ (u\ox v)\ar@{|->}[r]^-{\tau\circ\mathsf{id}}
\ar@{|->}[d]_-{\lambda_J^{\circ}}&
(yf_i\ox xe_i)\circ (u\ox v)\ar@{|->}[d]^-{\mu_J}\\
x\ox y f_j\theta(e_j)&
(yf_j\ox e_j)\bullet(x\ox 1)\ar@{|->}[l]^-{\lambda^{\bullet}_{I}}&
u\ox xv\theta^{-1}(y)\ar@{=}[r]_-{\eqref{eq:Frob-1} \eqref{eq:Frob2}}&
\psi(yf_iv)u\ox xe_i,}
$$
for any $x,y,u,v\in R$.
Commutativity of \eqref{diag:1associativity} and
\eqref{diag:2associativity} is immediate from \eqref{eq:circ_bimod},
\eqref{eq:bullet_left} and \eqref{eq:bullet_right}. The following
computation, for any $R^e$--bimodules $A$ and $B$, and any $a\in A$ and $b\in
B$, proves the commutativity of \eqref{diag:unit1}. Commutativity of
\eqref{diag:unit2} is proven analogously. 
$$
\xymatrix@R=5pt{
a\bullet b\ar@{|->}[r]^-{(\lambda_{A\bullet B}^{\circ})^{-1}}
\ar@{|->}[dd]_-{(\lambda_A^{\circ})^{-1}\bullet (\lambda_B^\circ)^{-1}}&
(1\ox 1)\circ (a\bullet b)\ar@{|->}[dddd]^-{\Delta_{I}\circ \mathsf{id}}\\
\\
((1\ox 1)\circ a)\bullet ((1\ox 1)\circ b)\ar@{=}[d]\\
((1\ox 1)\circ a)\cdot (1\ox e_j)\bullet (1\ox \theta^{-1}(f_j))\cdot 
((1\ox 1)\circ b)\ar@{=}[d]\\
((e_j\ox 1)\circ a)\bullet ((1\ox 1)\circ(1\ox f_j) b)&
((1\ox 1)\bullet (1\ox 1))\circ(a\bullet b)\ar@{|->}[l]^-\gamma}
$$
Reading from the top to the bottom, the first equality at the bottom
left corner follows by \eqref{eq:Frob4} and \eqref{separability}. The
computation below, for any $R^e$--bimodules $A$ and $B$, and any $a\in
A$ and $b\in B$, verifies the commutativity of \eqref{diag:unit3}, and
\eqref{diag:unit4} is checked analogously. 
$$
\xymatrix{
a\circ b
\ar@{|->}[rr]^-{(\lambda_A^{\bullet})^{-1}\circ (\lambda_B^{\bullet})^{-1}}
\ar@{|->}[d]_-{(\lambda_{A\circ B}^{\bullet})^{-1}}&&
((1\ox 1)\bullet a)\circ ((1\ox 1)\circ b)
\ar@{|->}[d]^-\gamma\\
(1\ox 1)\bullet (a\circ b)\ar@{=}[r]_-{\eqref{eq:Frob-1}}&
\psi(e_i)(1\ox 1)\bullet (a\circ (1\ox f_i)b)&
((e_i\ox 1)\circ (1\ox 1))\bullet (a\circ (1\ox f_i)b)
\ar@{|->}[l]^-{\mu_J\bullet \mathsf{id}}}
$$
\end{proof}

\begin{remark}\label{rem:sF}
For a separable Frobenius (co)algebra $R$, and $R^e$--bi(co)modules $M$ and $N$,
the $R$--(co)module tensor product
$$
M\ox N/\{m(s\ox 1)\ox n- m\ox n(1\ox s)\} \cong 
\{m(e_j\ox 1)\ox n(1\ox f_j)\}
$$
is an $R$--bimodule via
$$
r[m(e_j\ox 1)\ox n(1\ox f_j)]r'=
(r\ox 1)m(e_j\ox 1)\ox (1\ox r')n(1\ox f_j).
$$
The monoidal product $M\bullet N$ in Theorem \ref{thm:bim_obj} can be
interpreted as the center of this bimodule. That is, $\bullet$ is isomorphic
to the so-called {\em Takeuchi product} over $R$ \cite{Takeuchi}. 

The Takeuchi product of $R^e$--bimodules is defined for {\em any ring}
$R$. However, at this level of generality it does not define a monoidal
product on the category of $R^e$--bimodules (only a lax monoidal one, see
\cite{Day&Street:1997}). It is a consequence of the separable Frobenius
structure of $R$ that allows us to write the Takeuchi product over it
as a (co)module tensor product, what is more, as a split (co)equalizer. 
\end{remark}

\begin{remark}
For any commutative algebra $R$ over a field $k$, a duoidal category
$\mathsf{bim}(R)$ of $R$--bimodules was constructed in \cite[Example
6.18]{Aguiar&Mahajan}. Although the constructions in \cite[Example
6.18]{Aguiar&Mahajan} and in the current section are similar in flavor,
they yield inequivalent categories for a commutative separable Frobenius
$k$--algebra $R$ (in which case both can be applied). Indeed, an equivalence
$\mathsf{bim}(R) \cong \mathsf{bim}(R^e)$ would imply the Morita equivalence
of $R^e$ and $R^e\ox R^e$; hence $R^e\cong R\cong k$. To say a bit more about
the relationship between the categories $\mathsf{bim}(R)$ and
$\mathsf{bim}(R^e)$, let $R$ be a commutative separable Frobenius
$k$--algebra. Any $R$--bimodule $M$ with left and right actions $r\ox m\mapsto 
r\triangleright m$ and $m\ox r \mapsto m\triangleleft r$ can be regarded as an
$R^e\cong R\ox R$--bimodule putting $(s\ox r)m:=r\triangleright m\triangleleft
s =:m(s\ox r)$. This is the object map of a fully faithful embedding (acting
on the morphisms as the identity map) from the category $\mathsf{bim}(R)$ in 
\cite[Example 6.18]{Aguiar&Mahajan} to the category $\mathsf{bim}(R^e)$ in
Theorem \ref{thm:bim_obj} --- but it is not an equivalence. It is strict
monoidal with respect to the monoidal products $\diamond$ in \cite[Example
6.18]{Aguiar&Mahajan} and $\circ$ in Theorem \ref{thm:bim_obj} --- but not
with respect to $\star$ in \cite[Example 6.18]{Aguiar&Mahajan} and $\bullet$
in Theorem \ref{thm:bim_obj}. In fact, it takes the monoidal product $\star$
to $\bullet$ but it does not preserve its monoidal unit. The image of the
$\star$--monoidal unit $R$ in \cite[Example 6.18]{Aguiar&Mahajan} does not
serve as a $\bullet$--monoidal unit in our $\mathsf{bim}(R^e)$, while our
$\bullet$--monoidal unit $R^e$ does not lie in the image of the above embedding
$\mathsf{bim}(R) \to \mathsf{bim}(R^e)$.
\end{remark}

\begin{remark}
Recall (from \cite[Appendix C.5.3]{Aguiar&Mahajan}) that for a commutative
$k$--algebra $R$, the duoidal category $\mathsf{bim}(R)$ in \cite[Example
6.18]{Aguiar&Mahajan} arises via the so-called `looping principle'. This means
the following. If $(\mathcal V,\times, \mathbbm{1})$ is a monoidal 2-category
and $\mathcal C$ is a $\mathcal V$--enriched bicategory, then for any object
$R$ of $\mathcal C$, the hom object $\mathcal C(R,R)$ is a pseudo-monoid in
$\mathcal V$. By \cite[Appendix C.2.4]{Aguiar&Mahajan}, duoidal categories can
be regarded as pseudo-monoids in the monoidal 2-category $\mathsf{coMon}$ of
monoidal categories, comonoidal functors and comonoidal natural
transformations (with monoidal structure provided by the Cartesian 
product). So via the looping principle, hom objects in a
$\mathsf{coMon}$--enriched bicategory are duoidal categories. 

Below we claim that also the duoidal category $\mathsf{bim}(R^e)$ in Theorem
\ref{thm:bim_obj} can be obtained via the looping principle. For this purpose, 
we sketch the construction of a $\mathsf{coMon}$--enriched bicategory
$\mathcal C$ whose objects are separable Frobenius $k$--algebras, and
for any object $R$, $\mathcal C(R,R)\cong \mathsf{bim}(R^e)$. For any
separable Frobenius $k$--algebras $S$ and $R$, let $\mathcal C (R,S)$ be the
category of $R^e$-$S^e$--bimodules. As in \eqref{eq:dot_left}, we can regard
any $R^e$-$S^e$--bimodule $M$ as an $S\ox R$--bimodule via the actions 
$$
(s\ox r)\cdot m\cdot(s'\ox r')=(r'\ox \theta(r))m(s'\ox s).
$$
Hence $\mathcal C (R,S)$ is a monoidal category via the $S\ox R$--module
tensor product 
$$
M\bullet N:=M\ox N/ \{(r\ox 1)m(s\ox 1)\ox n-m\ox (1\ox \theta(r))n(1\ox s)\},
$$
cf. \eqref{eq:bullet_quotient}. The product $M\bullet N$ is an
$R^e$-$S^e$--bimodule as in (\ref{eq:bullet_left}-\ref{eq:bullet_right}). The
monoidal unit is $R\ox S$ with the actions $(r\ox r')(x\ox y)(s\ox
s')=rx\theta^{-1}(r')\ox s'ys$ (which becomes isomorphic to the
$R^e$--bimodule $J$ in \eqref{eq:LbimodJ} if $S=R$). For any separable
Frobenius $k$--algebra $R$, there is a comonoidal functor $I_R$ from the
singleton category $\mathbbm 1$ to $\mathcal C(R,R)$, sending the single
object of $\mathbbm 1$ to the $R^e$--bimodule $I$ in \eqref{eq:I_left}. Its
comonoidal structure is given (up-to isomorphism) by the $R^e$--bimodule maps
$\tau:I\to J$ and $\Delta_I:I\to I \bullet I$ in
\eqref{bim(Re)morphisms}. Coassociativity and counitality of this comonoidal
functor follows by coassociativity and counitality of $\Delta_I$. Furthermore,
for any separable Frobenius $k$--algebras $S$, $R$ and $T$, there is a
comonoidal functor $\circ_{S,R,T}:\mathcal C(S,R) \times \mathcal C(R,T) \to
\mathcal C(S,T)$ given by the usual $R^e$--module tensor product. Its
comonoidal structure is given by the maps 
$$
\begin{array}{ll}
(S\ox R)\circ (R\ox T)\to S\ox T,\qquad
&(s\ox r)\circ (r'\ox t)\mapsto \psi(rr')(s\ox t) 
\quad \textrm{and}\\ 
(A\bullet B)\circ (C\bullet D)\to (A\circ C)\bullet (B\circ D),\qquad
&(a\bullet b)\circ (c\bullet d)\mapsto 
(a(e_i\ox 1) \circ c)\bullet (b\circ (1\ox f_i)d),
\end{array}
$$
for any $S^e$-$R^e$--bimodules $A$ and $B$ and $R^e$-$T^e$--bimodules $C$ and
$D$ (compare them with $\mu_J$ in \eqref{bim(Re)morphisms} and $\gamma$ in
\eqref{bim(Re)interchangelaw}). These maps are checked to be bimodule maps in
the same way as $\mu_J$ and $\gamma$ are in the proof of Theorem
\ref{thm:bim_obj}. Naturality of the binary part is immediate. Coassociativity
and counitality of the comonoidal functor $\circ_{S,R,T}$ is verified by the same
computations used to check that \eqref{diag:2associativity},
\eqref{diag:unit3} and \eqref{diag:unit4} hold in the proof of Theorem
\ref{thm:bim_obj}. The unitors and the associator for the module tensor
product $\circ$ give rise to 2-cells 
$$
\xymatrix{
\mathcal C(R,S)\times \mathbbm 1\ar[d]_-{\mathcal C(R,S)\times I_S}
\ar@{}[rd]|-{\Rightarrow}&
\mathcal C(R,S)\ar[l]_-{\cong}\ar@{=}[d]\ar[r]^-\cong
\ar@{}[rd]|-{\Leftarrow}&
\mathbbm 1 \times \mathcal C(R,S)\ar[d]^-{I_R \times \mathcal C(R,S)}\\
\mathcal C(R,S)\times \mathcal C(S,S)\ar[r]_-{\circ_{R,S,S}}&
\mathcal C(R,S)&
\mathcal C(R,R)\times \mathcal C(R,S)\ar[l]^-{\circ_{R,R,S}}\\
(\mathcal C(Z,R)\times \mathcal C(R,S))\times \mathcal C(S,T)
\ar[rr]^-\cong \ar@{}[rrdd]|-{\Rightarrow}
\ar[d]_-{\circ_{Z,R,S} \times \mathcal C(S,T)}&&
\mathcal C(Z,R)\times (\mathcal C(R,S)\times \mathcal
C(S,T))\ar[d]^-{\mathcal C(Z,R)\times \circ_{R,S,T}}\\
\mathcal C(Z,S)\times \mathcal C(S,T)\ar[d]_-{\circ_{Z,S,T}}&&
\mathcal C(Z,R)\times \mathcal C(R,T)\ar[d]^-{\circ_{Z,R,T}}\\
\mathcal C(Z,T)\ar@{=}[rr]&&
\mathcal C(Z,T)}
$$
in $\mathsf{coMon}$, for any separable Frobenius algebras $S,R,Z,T$. Indeed,
they are shown to be comonoidal natural transformations by computations
similar to those verifying the associativity and the unitality of $\mu_J$ and
the validity of \eqref{diag:1associativity}, \eqref{diag:unit1} 
and \eqref{diag:unit2} in the proof of Theorem \ref{thm:bim_obj}. They
clearly obey the Mac Lane type coherence conditions. This proves that 
$\mathcal C$ is a $\mathsf{coMon}$--enriched bicategory hence
$\mathcal C(R,R)\cong \mathsf{bim}(R^e)$ is a duoidal category. 
\end{remark}

\begin{aussage}\textbf{The functor $\mathsf{bim}(-{}^e)$}. 
Let $R$ and $R'$ be separable Frobenius (co)algebras. Associated to any 
coalgebra homomorphism $q:R\to R'$, there is a functor
$\mathsf{bim}(q^e):\mathsf{bim}(R^e) \to \mathsf{bim}(R^{\prime e})$ (where
$q^e : R^e \to R'^e$ is defined by $q^e(s \ox r) = q(s) \ox q(r)$). It acts on
the morphisms as the identity map. It takes an $R^e$--bi(co)module $P$ with
coactions $\lambda:P\to R^e\ox P$ and $\rho: P \to P\ox R^e$ to the $R^{\prime
e}$--bi(co)module $P$ with the coactions $(q^e\ox P)\lambda$ and $(P\ox
q^e)\rho$. The $R^{\prime e}$--actions on $P$ are induced from the
$R^e$--actions by the dual forms of $q$; that is, by the algebra maps 
$$
\widetilde q: R'\to R, \ r'\mapsto \psi'(r'q(e_i))f_i
\qquad \textrm{and}\qquad
\hat q: R'\to R, \ r'\mapsto e_i\psi'(q(f_i)r')
$$
as 
$$
(r'\ox s')p(u'\ox v')=
(\widetilde q(r')\ox \hat q(s'))p(\hat q(u') \ox \widetilde q(v')),
\quad \textrm{for} \ p\in P,\ r',s',u',v'\in R'.
$$
Note that
\begin{equation}\label{eq:qhat_tr}
\qquad \hat q(e'_i)\ox f'_i=e_j\ox q(f_j)
\qquad \textrm{and}\qquad
\qquad e'_i\ox \widetilde q(f'_i)=q(e_j)\ox f_j.
\end{equation}
The maps $\widetilde q$ and $\hat q$ are equal if and only if $q$
commutes with the Nakayama automorphisms; that is,
$\theta'q=q\theta$. 
\end{aussage}

\begin{proposition}\label{prop:bim_mor}
Let $R$ and $R'$ be separable Frobenius (co)algebras and $q:R\to R'$ be a
coalgebra homomorphism which commutes with the Nakayama automorphisms. The
induced functor $\mathsf{bim}(q^e):\mathsf{bim}(R^e)\to \mathsf{bim}(R^{\prime
 e})$ is comonoidal with respect to both monoidal structures. 
\end{proposition}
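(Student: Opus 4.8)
The plan is to exhibit the comonoidal structure maps explicitly and to verify the relevant coherence conditions by direct (if tedious) computation, mirroring the arguments already used in the proof of Theorem \ref{thm:bim_obj}. First I would observe that since $\mathsf{bim}(q^e)$ acts as the identity on underlying vector spaces and on morphisms, the content of the claim is entirely about the structure maps. For the first monoidal structure $\circ = \ox_{R^e}$, the canonical projection $P \ox_{R^e} P' \to P \ox_{R^{\prime e}} P'$ is the obvious one (induced by the inclusion of relations, since $\widetilde q$ and $\hat q$ are algebra maps), giving $\mathsf{bim}(q^e)^\circ_2$, and $\mathsf{bim}(q^e)^\circ_0 : R^{\prime e} \to R^e$ should be $q^e$ postcomposed with the appropriate identification — more precisely, using that $I = R^e$ with the multiplication action, the map $\mathsf{bim}(q^e)(I) \to I'$ is determined by where $1 \ox 1$ goes, and one checks $(x \ox y) \mapsto (q(x) \ox q(y))$ works, using \eqref{eq:qhat_tr}. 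For the second structure $\bullet$, the structure map $\mathsf{bim}(q^e)^\bullet_2$ is induced similarly on the quotient \eqref{eq:bullet_quotient}, and here the hypothesis $\theta' q = q\theta$ is exactly what is needed to make the relation defining $M \bullet N$ compatible with the twist; the nullary part $\mathsf{bim}(q^e)^\bullet_0 : \mathsf{bim}(q^e)(J) \to J'$ should again be given by $q^e$ on $R^e = J$, checked against the actions \eqref{eq:LbimodJ}.

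Next I would verify that these four maps are indeed morphisms of $R^{\prime e}$-bimodules. For $\mathsf{bim}(q^e)^\circ_2$ and $\mathsf{bim}(q^e)^\bullet_2$ this is essentially automatic from functoriality of the module tensor products once one knows the twist is respected; for $\mathsf{bim}(q^e)^\circ_0$ and $\mathsf{bim}(q^e)^\bullet_0$ it is a short calculation using \eqref{eq:I_left}, \eqref{eq:LbimodJ}, the fact that $q$ is a coalgebra map (so $q(e_i) \ox q(f_i)$ serves as the Frobenius/separability element for $R'$, i.e.\ $e'_i \ox f'_i = q(e_i) \ox q(f_i)$ up to the identification), and \eqref{eq:qhat_tr}. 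Then I would check the coherence axioms: associativity and unitality of the comonoidal structure for $\circ$, and likewise for $\bullet$. These reduce to identities among elements of the form $q(e_i) \ox q(f_i)$ versus $e'_i \ox f'_i$ and compatibility of $\widetilde q, \hat q$ with the Frobenius functionals $\psi, \psi'$, all of which follow from $q$ being a coalgebra map together with \eqref{eq:Frob-1}, \eqref{eq:Frob4} and \eqref{eq:qhat_tr}. Since the structure maps and unit constraints for $\mathsf{bim}(R^e)$ and $\mathsf{bim}(R^{\prime e})$ have the same shape, these verifications parallel the ones in the proof of Theorem \ref{thm:bim_obj} almost line for line.

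The main obstacle — and the place where the hypothesis $\theta' q = q \theta$ genuinely enters — is well-definedness of $\mathsf{bim}(q^e)^\bullet_2$ on the quotient \eqref{eq:bullet_quotient}: one must check that a generator $(r\ox 1)m(s\ox 1)\ox n - m \ox (1 \ox \theta(r)) n (1 \ox s)$, which is zero in $M \bullet N$ over $R^e$, also maps to zero in $M \bullet N$ over $R^{\prime e}$. Passing to the $R^{\prime e}$-structure reintroduces the relation with $\theta'$ in place of $\theta$, transported through $\widetilde q$ and $\hat q$; reconciling the two requires precisely that $q$ intertwines $\theta$ and $\theta'$, equivalently (by the last sentence of the preceding \texttt{aussage}) that $\widetilde q = \hat q$. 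I would isolate this as the crux, dispatch it by a direct computation with the twisted actions \eqref{eq:dot_left} and \eqref{eq:bullet_left}, and then treat the remaining bimodule-map checks and coherence diagrams as routine, pointing to the proof of Theorem \ref{thm:bim_obj} for the pattern and spelling out only one or two representative cases (for instance, compatibility of $\mathsf{bim}(q^e)^\circ_0$ with $\mu_J$, and of $\mathsf{bim}(q^e)^\bullet_0$ with $\Delta_I$).
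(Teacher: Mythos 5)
There is a genuine gap, and it sits at the very foundation of your construction: the binary structure maps do not exist in the form you describe. The $R^{\prime e}$--actions on $\mathsf{bim}(q^e)M$ are obtained by \emph{restriction} of the $R^e$--actions along the algebra maps $\widetilde q=\hat q:R'\to R$, so the relations defining $M\circ' N=M\ox_{R^{\prime e}}N$ (respectively $M\bullet' N$) are indexed by elements in the \emph{image} of $\widetilde q$ and hence form a subset of the relations defining $M\circ N$ (respectively $M\bullet N$). Consequently the canonical quotient map goes $M\circ' N\to M\circ N$ --- the wrong way round for a comonoidal structure $\mathsf{bim}(q^e)(M\circ N)\to \mathsf{bim}(q^e)M\circ'\mathsf{bim}(q^e)N$. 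There is no ``obvious projection induced by the inclusion of relations'' in the direction you need; for instance with $q=\psi:R\to k$ your prescription would send $m\circ n$ to $m\ox n\in M\ox_k N$, which is not well defined. The paper's proof instead \emph{defines} the binary parts by routing through the separable Frobenius splittings: $\mathsf{bim}(q^e)^\circ_2=\pi^{\circ'}_{M,N}\,\iota^{\circ}_{M,N}$, explicitly $m\circ n\mapsto m(e_i\ox f_j)\circ'(f_i\ox e_j)n$, and analogously $\mathsf{bim}(q^e)^\bullet_2=\pi^{\bullet'}_{M,N}\,\iota^{\bullet}_{M,N}$. All the coherence checks then take a different (and element-explicit) form, resting on the identities \eqref{eq:qhat_tr} and $e_j\widetilde qq(f_j)=1=\widetilde qq(e_j)f_j$.

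Two further consequences. First, your identification of ``the crux'' is misplaced: with the correct definitions there is no well-definedness issue on the quotient \eqref{eq:bullet_quotient}, since $\iota^{\bullet}$ is already known to be well defined and $\pi^{\bullet'}$ is defined on all of $M\ox N$. The hypothesis $\theta'q=q\theta$ enters elsewhere: it guarantees $\widetilde q=\hat q$ (so the $R^{\prime e}$--bimodule structure on $\mathsf{bim}(q^e)M$ is unambiguous) and, in the form $\widetilde q\theta'=\theta\widetilde q$, it is used to verify the $R^{\prime e}$--bilinearity of the nullary part $\mathsf{bim}(q^e)^{\bullet}_0=q^e$ with respect to the twisted actions \eqref{eq:LbimodJ}. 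Second, the identity $e'_i\ox f'_i=q(e_i)\ox q(f_i)$ you invoke is false in general (take $q$ to be the inclusion of a direct factor $R_1\hookrightarrow R_1\times R_2$); only the weaker relations \eqref{eq:qhat_tr} hold, and these are what the counitality computations actually use. Your nullary parts ($q^e$ in both cases) are correct, but the argument needs to be rebuilt around the split-(co)equalizer description of $\circ$ and $\bullet$.
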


\begin{proof}
The coalgebra homomorphisms $q:R\rightarrow R'$ are in bijective
correspondence with the algebra homomorphisms $\widetilde{q}:R'\rightarrow R$
via transposition (or duality)
\begin{eqnarray*}
q\mapsto \widetilde{q}=\psi'(-q(e_i))f_i\qquad 
\widetilde{q}\mapsto q=e'_i\psi(\widetilde{q} (f'_i)-). 
\end{eqnarray*}
In particular, the Nakayama automorphism and its dual $\widetilde{\theta}$ 
satisfy 
\begin{equation*}
 \widetilde{\theta}(r)=\psi(r\theta(e_i))f_i=\psi(r
 f_i)e_i=\theta^{-1}(r), 
\end{equation*}
cf. \eqref{eq:Frob4} and \eqref{Nakayamaexplicitform}.
Thus the assumption $\theta'q=q\theta$ can be written equivalently as
$\widetilde{q}\theta'=\theta \widetilde{q}$. 

The candidate for the binary part of the comonoidal structure with
respect to $\circ$ is the ($R^e$--bimodule) map 
\begin{equation*}
\mathsf{bim}(q^e)_2^{\circ}: \mathsf{bim}(q^e)(M\circ N)\to
\mathsf{bim}(q^e)M\circ' \mathsf{bim}(q^e)N, \qquad 
m\circ n\mapsto m(e_i\ox f_j)\circ' (f_i\ox e_j)n.
\end{equation*}
It is evidently coassociative. The nullary part of the $\circ$--comonoidal
structure is 
\begin{equation*}
\mathsf{bim}(q^e)^\circ_0=q^e :
R^e\rightarrow R^{ \prime e},\qquad 
x\ox y \mapsto q(x)\ox q(y).
\end{equation*}
Its $R'^e$--bimodule map property; that is, 
\begin{equation*} 
 sq(x)s'\ox rq(y)r'=q(\widetilde{q}(s) x \widetilde{q}(s'))\ox
 q(\widetilde{q}(r) y \widetilde{q}(r')) 
\end{equation*}
is proven by 
\begin{equation}\label{eq:qqtilde}
\begin{array}{rl}
q(\widetilde{q}(r')x\widetilde{q}(s'))
=& \psi'(r'q(e_i))q(f_ixe_j)\psi'(q(f_j)s')
=\psi'(r' q(e_i))q(f_ix)e^{\prime}_k \psi^{\prime}(f^{\prime}_k s') \\
=& \psi'(r' q(e_i))q(f_ix)s'
=\psi'(r'e^{\prime }_l)f^{\prime}_lq(x)s'
=r'q(x)s',
\end{array}
\end{equation}
for all $r',s'\in R'$, where in the second and the penultimate equalities we
used that $q$ is comultiplicative. Right counitality; that is, 
commutativity of 
$$
\xymatrix@R=2pt@C=10pt{
\mathsf{bim}(q^e)M \ar[dd]_-{\mathsf{bim}(q^e)M}
\ar[r]^-{\raisebox{6pt}{${}_{\mathsf{bim}(q^e)((\rho_M^{\circ})^{-1})}$}} &
\mathsf{bim}(q^e)(M\circ I)\ar[dddd]^-{\mathsf{bim}(q^e)_2^{\circ}}&
m\ar@{|->}[r]\ar@{=}[dd]&
m\circ (1\ox 1)\ar@{|->}[dddd]\\
\\
\mathsf{bim}(q^e)M&&
m(e_i\ox f_j)(\widetilde q q(f_i)\ox \widetilde qq(e_j))\\
\\
\mathsf{bim}(q^e)M\circ' I'\ar[uu]^-{\rho_{\mathsf{bim}(q^e)M}^{\circ'}} &
\mathsf{bim}(q^e)M\circ' \mathsf{bim}(q^e)I
\ar[l]^-{\raisebox{-7pt}{${}_{\mathsf{bim}(q^e)M\circ' \mathsf{bim}(q^e)_0^{\circ}}$}}&
m(e_i\ox f_j)\circ' (q(f_i)\ox q(e_j))\ar@{|->}[uu]&
m(e_i\ox f_j)\circ' (f_i\ox e_j)\ar@{|->}[l]
}
$$
follows from
\begin{equation}\label{eq:qqtilde1}
e_j\widetilde q q(f_j)
\stackrel{\eqref{eq:qhat_tr}}=
\widetilde q(e'_i)\widetilde q(f'_i)=\widetilde
q(e'_if'_i)=\widetilde q(1')=1 
\quad \textrm{and}\quad 
\widetilde q q(e_j)f_j
\stackrel{\eqref{eq:qhat_tr}}=
\widetilde q(e'_i)\widetilde q(f'_i)=\widetilde
q(e'_if'_i)=\widetilde q(1')=1, 
\end{equation}
where in both cases, in the second and the last equalities we used that
$\widetilde q$ is an algebra homomorphism.
A similar computation shows counitality on the other side.

The binary part of the $\bullet$--comonoidal structure is given by the
$R^e$--bimodule maps 
\begin{equation*}
\mathsf{bim}(q^e)_2^{\bullet}: \mathsf{bim}(q^e)(M\bullet N) \to
\mathsf{bim}(q^e)M\bullet^{\prime} \mathsf{bim}(q^e)N, \qquad 
m\bullet n\mapsto (e_i\ox 1)m(e_j\ox 1)
 \bullet' (1 \ox f_i)n(1 \ox f_j). 
\end{equation*}
Its coassociativity is obvious. The nullary part is given by
$\mathsf{bim}(q^e)_0^{\bullet} =q^e:R^e\rightarrow R'^e$. Its
$R'^e$--bilinearity; that is, 
\begin{equation*}
 r'q(x)s'\ox sq(y)\theta'^{-1}(r)=q(\widetilde q(r')x\widetilde{q}(s'))\ox
 q(\widetilde{q}(s)y\theta^{-1}\widetilde{q}(r)) 
\end{equation*}
follows by 
\eqref{eq:qqtilde} and
 $\widetilde{q}\theta'=\theta\widetilde{q}$.
Right counitality; that is, commutativity of
\begin{equation*}
\scalebox{.98}{
\xymatrix@R=2pt@C=7pt{
\mathsf{bim}(q^e)M \ar[dd]_-{\mathsf{bim}(q^e)M}
\ar[r]^-{\raisebox{5pt}{${}_{\mathsf{bim}(q^e)((\rho_M^{\bullet})^{-1})}$}} &
\mathsf{bim}(q^e)(M\bullet J)\ar[dddd]^-{\mathsf{bim}(q^e)_2^{\bullet}}&
m\ar@{|->}[r]\ar@{=}[dd]&
m\bullet (1\ox 1)\ar@{|->}[dddd]\\
\\
\mathsf{bim}(q^e)M
&& (\widetilde qq(e_i)f_i\ox 1)m(e_j\widetilde qq(f_j)\ox 1)\\
\\
\mathsf{bim}(q^e)M\bullet' J'\ar[uu]^-{\rho_{\mathsf{bim}(q^e)M}^{\bullet'}} & 
\mathsf{bim}(q^e)M\bullet' \mathsf{bim}(q^e)J
\ar[l]^-{\raisebox{-7pt}{
${}_{\mathsf{bim}(q^e)M\bullet' \mathsf{bim}(q^e)_0^{\bullet}}$}}&
(f_i\!\ox\! 1)m(e_j\!\ox\! 1)\!\bullet'\!
(q(f_j)\! \ox\! q(e_i)) \ar@{|->}[uu]&
(f_i\!\ox\! 1)m(e_j\!\ox\! 1)\!\bullet' \!
(f_j \! \ox \! e_i) \ar@{|->}[l]}}
\end{equation*}
follows by \eqref{eq:qqtilde1} and similarly on the other side.
\end{proof}

By Theorem \ref{thm:bim_obj} and Proposition \ref{prop:bim_mor}
there is a functor $\mathsf{bim}(-{}^e):\mathsf{sfr}\to \mathsf{duo}$. Our 
next aim is to describe the corresponding category
$\mathsf{bmd}(\mathsf{bim}(-{}^e))$ as a category of weak bialgebras over
$k$. We begin with identifying in the next two paragraphs the objects of
$\mathsf{bmd}(\mathsf{bim}(-{}^e))$ with weak bialgebras; that is, the
bimonoids in $\mathsf{bim}(R^e)$ with weak bialgebras of `right' subalgebras
isomorphic to $R$.

\begin{punto}\textbf{From weak bialgebras to bimonoids.}\label{frombimtowba} 
Let $(H,\mu,\eta,\Delta,\epsilon)$ be a weak bialgebra over a field $k$. By
\cite[Proposition 4.2]{Schauenburg:2003}, 
$R := \sqcap^R (H)$ is a separable Frobenius algebra with $1_1 \ox
\sqcap^R(1_2) \in R \ox R$ as separability idempotent, and the restriction of
the counit $\epsilon$ to $R$ as the corresponding Frobenius functional. In
this case, the Nakayama automorphism and its inverse (see
\eqref{Nakayamaexplicitform}) are the (co)restrictions of $\sqcap ^R
{\sqcap}^L$ and $\overline{\sqcap}^R\overline{\sqcap}^L$ to $R$,
respectively. In this paragraph we equip $H$ with the structure of a
bimonoid in $\mathsf{bim}(R^e)$. 

First we construct on $H$ a monoid structure in $\mathsf{bim}(R^e)$. By
\cite[Lemma 2.2]{Bohm:2009}, this amounts to the construction of an algebra
homomorphism $R^e\to H$: Since $\sqcap^R(H)$ and $\overline{\sqcap}^L (H)$ are
commuting subalgebras in $H$, and since $\overline{\sqcap}^L$ restricts to an
anti-algebra isomorphism between them (cf. \cite[Proposition
1.18]{BohmEtAll:}), it follows that the map $\widetilde{\eta} : R^e \to H$
defined as $\widetilde{\eta}(s \ox r) = s\overline{\sqcap}^L (r)$ is a desired
homomorphism of algebras. It induces an $R^e$--bimodule structure on $H$. We
denote the resulting actions by juxtaposition. By virtue of \cite[Lemma
2.2]{Bohm:2009}, the multiplication $\mu$ factorizes through an
$R^e$--bilinear associative multiplication $\widetilde{\mu} : H \circ H \to H$
with unit $\widetilde{\eta}$, so that $(H,\widetilde{\mu},\widetilde{\eta})$ 
has a structure of monoid in $\mathsf{bim}(R^e)$. 

In order to equip $H$ with the structure of a comonoid in $\mathsf{bim}(R^e)$,
note that $\Delta:H \to H\ox H$ factorizes through $H\bullet H$ (via the
inclusion $\iota^\bullet_{H,H}:H\bullet H \to H\ox H$). That is, for any $h\in
H$,
\begin{equation}\label{eq:Delta_fac}
\begin{array}{rl}
\Delta(h)=& h_1\ox h_2
= 
1_1h_{ 1} 1_{1'}\ox 1_2h_21_{2'}
=
1_1h_{ 1} 1_{1'}\ox\overline{\sqcap}^L\sqcap^R(1_2)h_2
\overline{\sqcap}^L\sqcap^R(1_{2'})\\ 
=&
(1_1\ox 1)h_1(1_{1'}\ox 1)\ox (1\ox \sqcap^R(1_2))h_2 (1\ox \sqcap^R(1_{2'})) 
=
\iota^\bullet_{H,H}(h_1\bullet h_2),
\end{array} 
\end{equation}
where \eqref{piLbarpiR=piLbar} and \eqref{delta1} has been used. As the
comultiplication for the bimonoid associated to the weak bialgebra $H$,
consider the corestriction $\widetilde{\Delta}:H \to H\bullet H$ of
$\Delta$. As the counit, put
$\widetilde{\epsilon}=(\sqcap^R\ox\overline{\sqcap}^R)\Delta=  
(\sqcap^R\ox\overline{\sqcap}^R)\Delta^{op}:H \to R\ox R$. 
(The two forms are equal, indeed, since for any $h\in H$, 
\begin{eqnarray*}
 \sqcap^R(h_1)\ox \overline{\sqcap}^R(h_2) &=&
1_1\epsilon(h_11_2)\ox 1_{1'}\epsilon(1_{2'}h_2)
 =
1_1\ox 1_{1'}\epsilon(1_{2'}h1_2)\\ &=& 
1_1\epsilon(h_21_2)\ox 1_{1'}\epsilon(1_{2'}h_1)
= 
\sqcap^R(h_2)\ox \overline{\sqcap}^R(h_1).)
\end{eqnarray*}
The comultiplication $\widetilde{\Delta}$ is $R^e$--bilinear by the
$R$--module map properties of $\Delta$, cf. \eqref{idpiLdeltah}. 
Right $R^e$--linearity of $\widetilde{\epsilon}$ follows by 
\begin{eqnarray*}
\widetilde{\epsilon}(h(s\ox r))&=&
\widetilde{\epsilon}(hs\overline \sqcap^L(r))
=
1_1\ox 1_{1'}\epsilon(1_{2'}hs\overline \sqcap^L(r)1_2)
=
r1_1\ox 1_{1'}\epsilon(1_{2'}hs 1_2)\\
&\stackrel{\eqref{piRpiLcommute}}=&
r1_1\ox 1_{1'}\epsilon(1_{2'}h1_2 s)
\stackrel{\eqref{piRproduct}}=
r1_1\ox 1_{1'}\epsilon(1_{2'}h 1_2\sqcap^L(s))\\
&=&
r1_1s\ox 1_{1'}\epsilon(1_{2'}h 1_2)
\stackrel{\eqref{eq:LbimodJ}}=
\widetilde{\epsilon}(h)\cdotdot(s\ox r),
\end{eqnarray*}
for $h\in H$ and $s\ox r\in R^e$.
The third and the sixth equalities follow by the Frobenius property of $R$:
just apply $\mathsf{id}\ox \overline \sqcap^L$ to the identities
$$
1_1\ox \sqcap^R(1_2)\sqcap^R(h)=\sqcap^R(h)1_1\ox \sqcap^R(1_2)
\quad\textrm{and}\quad
1_1\ox \sqcap^R\sqcap^L(h)\sqcap^R(1_2)=1_1\sqcap^R(h)\ox \sqcap^R(1_2)
$$
holding true for all $h\in H$ by (\ref{eq:Frob1}-\ref{eq:Frob2}). 
Left $R^e$--linearity of $\widetilde{\epsilon}$ is checked
symmetrically. 
Coassociativity of $\widetilde{\Delta}$ is obvious. The computation 
$$
h_1\cdot(\sqcap^R(h_2)\ox \overline \sqcap^R(h_3))=
(\overline \sqcap^R(h_3)\ox 1)h_1(\sqcap^R(h_2)\ox 1)=
\overline \sqcap^R(h_3)h_1\sqcap^R(h_2)=h,
$$
for any $h\in H$, shows its right counitality and left
counitality is checked symmetrically. This proves that $(H,\widetilde
\Delta,\widetilde \epsilon)$ is a comonoid in $\mathsf{bim}(R^e)$. 

Our next aim is to show that the compatibility conditions --- expressed
by diagrams \eqref{diag:bimon1}, \eqref{diag:bimon2}, \eqref{diag:bimon3}
and \eqref{diag:bimon4} --- hold between the above monoid and comonoid
structures of $H$. Commutativity of \eqref{diag:bimon1} follows by
commutativity of 
$$
\xymatrix@C=60pt{
h\circ h'\ar@{|->}[r]^-{\widetilde{\Delta}\circ\widetilde{\Delta}}
\ar@{|->}[d]_-{\widetilde{\mu}}&
(h_1\bullet h_2)\circ (h'_1\bullet h'_2)\ar@{|->}[r]^-\gamma&
(h_11_1 \circ h'_1)\bullet (h_2\circ 1_2h'_2)
\ar@{|->}[d]^-{\widetilde{\mu}\bullet\widetilde{\mu}}\\
hh'\ar@{|->}[r]_-{\widetilde{\Delta}}&
(hh')_1\bullet (hh')_2\ar@{=}[r]&
h_11_1h'_1\bullet h_21_2h'_2,}
$$
for any $h,h'\in H$, where the equality in the bottom row follows by the
comultiplicativity and the unitality of the multiplication $\mu:H\ox H \to H$.
Commutativity of \eqref{diag:bimon2} follows by commutativity of
$$
\xymatrix@C=15pt{
h\circ h'\ar@{|->}[rr]^-{\widetilde{\epsilon}\circ\widetilde{\epsilon}}
\ar@{|->}[d]_-{\widetilde{\mu}}&&
(\sqcap^R(h_1)\ox \overline{\sqcap}^R(h_2))\circ 
(\sqcap^R(h'_1)\ox\overline{\sqcap}^R(h'_2))
\ar@{|->}[d]^-{\mu_J}\\
hh'\ar@{|->}[r]_-{\widetilde{\epsilon}}&
\sqcap^R((hh')_1)\ox \overline \sqcap^R((hh')_2)\ar@{=}[r]&
\epsilon(\sqcap^R(h_1)\overline{\sqcap}^R(h'_2))
\sqcap^R(h'_1)\ox\overline{\sqcap}^R(h_2),}
$$
for any $h,h'\in H$. In order to verify the equality in the bottom row,
observe that for all $h,h'\in H$, 
$$
\overline \sqcap^R(hh')
\stackrel{\eqref{piRproduct}}=\overline \sqcap^R(h\overline \sqcap^R(h'))
\stackrel{\eqref{piLbarpiR=piLbar}}=\overline \sqcap^R(h\overline
\sqcap^R\sqcap^L(h')) 
\stackrel{\eqref{piRproduct}}=\overline \sqcap^R(h\sqcap^L(h')).
$$
Using this identity in the penultimate equality, 
\begin{eqnarray*}
\epsilon(\sqcap^R(h_1)\overline{\sqcap}^R(h'_2))\hspace{-.4cm}&&\hspace{-.5cm}
\sqcap^R(h'_1)\ox\overline{\sqcap}^R(h_2)\stackrel{\eqref{piRproduct}}= 
\epsilon(\sqcap^R(h_1)h'_2)
\sqcap^R(h'_1)\ox\overline{\sqcap}^R(h_2)\\
&\stackrel{\eqref{idpiLdeltah}}=& 
\sqcap^R(\sqcap^R(h_1) h')\ox\overline{\sqcap}^R(h_2)
\stackrel{\eqref{piRproduct}}=
\sqcap^R(h_1 h')\ox\overline{\sqcap}^R(h_2) \\
&=& \sqcap^R(h_1 1_{1} h')\ox\overline{\sqcap}^R(h_21_{2})
\stackrel{\eqref{idpiLdeltah}}= 
\sqcap^R(h_1 h'_1)\ox
\overline{\sqcap}^R(h_2 \sqcap^L(h'_2))\\
&=& 
\sqcap^R(h_1h'_1)\ox \overline{\sqcap}^R(h_2h'_2)
= 
\sqcap^R((hh')_1)\ox \overline \sqcap^R((hh')_2).
\end{eqnarray*}

Commutativity of \eqref{diag:bimon3} and \eqref{diag:bimon4} follows by
commutativity of 
$$
\xymatrix{
s\ox r \ar@{|->}[rrr]^-{\widetilde{\eta}}\ar@{|->}[d]_-{\Delta_I}&&&
s\overline{\sqcap}^L(r)\ar@{|->}[d]^-{\widetilde{\Delta}}\\
(1\ox r)\bullet (s\ox 1)\ar@{|->}[r]_-{\widetilde{\eta}\bullet\widetilde{\eta}}&
\overline{\sqcap}^L(r)\bullet s\ar@{=}[r]_-{\eqref{counitalpropertiesofmaps}}&
\overline{\sqcap}^L(r)\cdot (1_1 \ox 1)\bullet (\sqcap^R(1_2)\ox 1)\cdot s
\ar@{=}[r]&
\overline{\sqcap}^L(r)1_1 \bullet s1_2}
$$
and
$$
\xymatrix@C=50pt{
s\ox r \ar@{|->}[rr]^-{\widetilde{\eta}}\ar@{|->}[d]_-{\tau}&&
s\overline{\sqcap}^L(r)\ar@{|->}[d]^-{\widetilde\epsilon}\\
r\sqcap^R(1_2)\ox s1_1\ar@{=}[r]&
r1_1\ox s \overline{\sqcap}^R(1_2)
\ar@{=}[r]_-{\eqref{R-moduleproperties}\ \eqref{piLbarpiR=piLbar}}&
\sqcap^R(\overline{\sqcap}^L(r)1_1)\otimes \overline{\sqcap}^R(s1_2),
}
$$
respectively, for any $s,r\in R$. Therefore, we conclude that
$(H,\widetilde{\mu},\widetilde{\eta},\widetilde{\Delta},\widetilde{\epsilon})$
is a bimonoid in $\mathsf{bim}(R^e)$. 
\end{punto}

\begin{punto}\textbf{From bimonoids to weak bialgebras.}\label{fromwbatobim}
Take now a bimonoid
$(H,\widetilde{\mu},\widetilde{\Delta},\widetilde{\eta},\widetilde{\epsilon})$ 
in $\mathsf{bim}(R^e)$, for some separable Frobenius (co)algebra
$R$ over the field $k$. In this paragraph we equip $H$ with the
structure of a weak bialgebra over $k$, whose `right' subalgebra is
isomorphic to $R$. 

First we construct an associative and unital $k$--algebra structure on
$A$, via the multiplication and the unit defined by
\begin{eqnarray*}
 \xymatrix{\mu:H\ox H\ar@{->>}[r]^-{\pi^{\circ}_{H,H}} & H\circ
 H\ar[r]^-{\widetilde{\mu}} & H}\quad \textrm{and}\quad 
&&\xymatrix{\eta: k\ar[r]^-{{\eta}_{R^e}} & R^e\ar[r]^-{\widetilde{\eta}} & H,}\\
\end{eqnarray*}
where ${\eta}_{R^e}$ denotes the unit of the $k$--algebra
$R^e$. 

Next, we can make $H$ to be a $k$--coalgebra via the comultiplication and the
counit
\begin{equation*}
\xymatrix{\
\Delta:H\ar[r]^-{\widetilde{\Delta}} & H\bullet H\ 
\ar@{>->}[r]^-{\iota^{\bullet}_{H,H}} & H\ox H} \quad \textrm{and}\quad 
\xymatrix{\epsilon: H\ar[r]^-{\widetilde{\epsilon}}& R^e\ar[r]^-{\psi\ox\psi}&
k.} 
\end{equation*}
Indeed, $\Delta$ is evidently coassociative and it is counital by
commutativity of
$$
\xymatrix @R=15pt@C=35pt{
H\ar[r]^-{\widetilde{\Delta}} \ar@/_2pc/@{=}[rrdd]& 
H\bullet H\ar[d]^-{\widetilde \epsilon \bullet H}\ 
\ar@{>->}[r]^-{\iota^{\bullet}_{H,H}} & 
H\ox H\ar[d]^-{\widetilde \epsilon \ox H}\\
&R^e \bullet H\ \ar@{>->}[r]^-{\iota^{\bullet}_{R^e,H}} 
\ar[rd]^(.7){\lambda^\bullet_H}&
R^e\ox H\ar[d]^-{(\psi\ox \psi)\ox H}\\
&&H}
$$
--- where the triangle at the bottom right commutes by
\eqref{eq:lambda_rho_iota} --- and similarly on the other side. 

Our next aim is to show that the above algebra and coalgebra structures
of $H$ combine into a weak bialgebra. In doing so, we use both Sweedler
notations $\widetilde\Delta(h)=h_{\grtilde{1}}\bullet h_{\grtilde{2}}$ and
$\Delta(h)=h_1\ox h_2$, for any $h\in H$. 

We begin with checking the multiplicativity of the comultiplication $\Delta$;
that is, axiom \eqref{multiplicativitycomultiplication}. For any $h\in
H$, $\Delta(h)=
\iota_{H,H}^{\bullet}\widetilde{\Delta}(h)=
(e_j\ox 1) h_{\grtilde{1}} (e_i\ox 1) \ox 
(1\ox f_j) h_{\grtilde{2}} (1\ox f_i)$, hence
\begin{eqnarray*}
\Delta(h)\Delta(h')&=&
(e_j\ox 1) h_{\grtilde{1}} (e_i\ox 1) h'_{\grtilde{1}}(e_k\ox 1)
 \ox 
(1\ox f_j) h_{\grtilde{2}} (1\ox f_i) h'_{\grtilde{2}} (1\ox f_k) \\
&=& \iota_{H,H}^{\bullet}(\widetilde{\mu}\bullet\widetilde{\mu})\gamma
(\widetilde{\Delta}\circ\widetilde{\Delta})
\pi_{H,H}^{\circ}(h\ox h')
\stackrel{\eqref{diag:bimon1}}=
\iota_{H,H}^{\bullet}\widetilde{\Delta}\widetilde{\mu}
\pi_{H,H}^{\circ}(h\ox h') =
\Delta\mu(h\ox h')=\Delta(hh'),
\end{eqnarray*} 
for all $h,h'\in H$. 

Next we check axiom \eqref{weakcomultiplicativityunit}, expressing
weak comultiplicativity of the unit. From \eqref{diag:bimon3} on the bimonoid
$H$ it follows that
\begin{equation}
{\Delta}\widetilde{\eta}(r\ox s) =
\iota_{H,H}^{\bullet}\widetilde{\Delta}\widetilde{\eta}(r\ox s)=
\iota_{H,H}^{\bullet}(\widetilde{\eta} \bullet \widetilde{\eta})
\Delta_I(r\ox s)=
\widetilde{\eta}(e_i\otimes s)\ox 
\widetilde{\eta}(r \ox f_i).
\label{eq:bimon2} 
\end{equation}
With this identity at hand, the weak comultiplicativity of the unit is checked
by
\begin{eqnarray*}
(H\ox {\Delta})\Delta(1)&=&
\widetilde{\eta}(e_i\otimes 1)\ox 
\Delta \widetilde{\eta}(1\ox f_i)
=\widetilde{\eta}(e_i\otimes 1)\ox \widetilde{\eta}(e_j\ox f_i)
\ox \widetilde{\eta}(1\ox f_j)\\
&=&\widetilde{\eta}(e_i\otimes 1)\ox 
\widetilde{\eta}(1 \ox f_i)\widetilde{\eta}(e_j\otimes 1)
\ox \widetilde{\eta}(1\ox f_j)
=(\Delta(1)\ox 1)(1\ox \Delta(1)).
\end{eqnarray*}
Since $\widetilde{\eta}(1\ox r)
\widetilde{\eta}(s\otimes 1)=\widetilde{\eta}(s\ox r)=
\widetilde{\eta}(s\otimes 1)\widetilde{\eta}(1\ox r)$, for all $s,r\in R$,
also $(1\ox \Delta(1))(\Delta(1)\ox 1)=(H\ox {\Delta})\Delta(1)$. 

Finally, we check that axiom
\eqref{weakmultiplicativitycounitalternativeaxioms} --- expressing weak 
multiplicativity of the counit --- holds. This starts with proving the 
equality
\begin{equation}\label{bimonoidcounit}
\widetilde{\epsilon}=
(\sqcap\ox \overline{\sqcap})\iota^{\bullet}_{H,H}\widetilde{\Delta}
=(\sqcap\ox \overline{\sqcap})\Delta
\end{equation}
in terms of the maps
\begin{equation*}\label{pi&pibardef}
\xymatrix{\sqcap:=(H\ar[r]^-{\widetilde{\epsilon}} & 
R\ox R^{op}\ar[r]^-{R\ox \psi} & R)}\qquad \text{and}\qquad 
\xymatrix{\overline{\sqcap}:=(H\ar[r]^-{\widetilde{\epsilon}} & R\ox
R^{op}\ar[r]^-{\psi\ox R^{op}} & R^{op}).} 
\end{equation*}
Equality \eqref{bimonoidcounit} is proven by commutativity of the following
diagram, noting that $\rho^{\bullet}_H$ is an isomorphism. 
\begin{equation*}
\xymatrix{
H\ox H \ar[rr]^-{\widetilde{\epsilon}\ox \widetilde{\epsilon}} && 
R^e\ox R^e \ar[rr]^-{R\ox \psi\ox \psi\ox R^{op}} && R^e\\
H\bullet H \ar@{>->}[u]^-{\iota_{H,H}^{\bullet}} 
\ar[r]^-{H\bullet \widetilde{\epsilon}} & 
H\bullet R^e\ar[ld]_-{\rho^{\bullet}_H}\ar[r]^{\widetilde{\epsilon}\bullet R^e}& 
R^e\bullet R^e
\ar@{>->}[u]^-{\iota_{R^e,R^e}^{\bullet}}\ar[rr]^-{\rho^{\bullet}_{R^e}} &&
R^e\ar@{=}[u]\\
H\ar[u]^-{\widetilde{\Delta}}\ar@/_1.5pc/[rrrru]^-{\widetilde{\epsilon}}}
\end{equation*}
The bottom-right region and the top-left region commute by the
$R^e$--bimodule map property of $\widetilde{\epsilon}$. The bottom-left region
commutes by counitality $\widetilde \Delta$. Commutativity of the top-right
region follows similarly to \eqref{eq:lambda_rho_iota}. 
From \eqref{diag:bimon2} on the bimonoid $H$ and from
\eqref{bimonoidcounit}, it follows that 
\begin{equation}
\sqcap((hh')_{ 1})\ox \overline{\sqcap}((hh')_{ 2})=
\psi(\sqcap(h_{ 1})\overline{\sqcap}(h'_{ 2})) \sqcap(h'_{ 1})\ox
\overline{\sqcap}(h_{ 2}), \label{eq:bimon3} 
\end{equation}
for all $h,h'\in H$. Using the $R^e$--bilinearity of $\widetilde
\epsilon$ together with \eqref{bimonoidcounit} in the second equality, 
\begin{eqnarray*}
\epsilon(h 1_1)\epsilon(1_2 h')
&=&(\psi\ox\psi)\widetilde{\epsilon}(h (e_i\ox 1))(\psi\ox\psi)
\widetilde{\epsilon}((1\ox f_i) h')\\
&=&\psi(\sqcap(h _{ 1})e_i)
\psi\overline{\sqcap}(h _{ 2})
\psi\sqcap(h' _{ 1})
\psi(\overline{\sqcap}(h' _{ 2})\theta^{-1}(f_i))\\
&\stackrel{\eqref{eq:Frob-1}\eqref{eq:Frob0}}=&
\psi(\sqcap(h _{ 1})\overline{\sqcap}(h' _{ 2}))
\psi\sqcap(h' _{ 1})
\psi\overline{\sqcap}(h _{ 2})\\ 
&\stackrel{\eqref{eq:bimon3}}=& 
\psi\sqcap((h h')_{ 1})
\psi\overline{\sqcap}((h h')_{ 2})
\stackrel{\eqref{bimonoidcounit}}=
(\psi\ox\psi)\widetilde{\epsilon}(hh')= 
\epsilon(h h'),
\end{eqnarray*}
where in the first equality we used \eqref{eq:bimon2} and that the
multiplication $\mu$ of the $k$--algebra $H$ is $R^e$--balanced and
$R^e$--bilinear. A symmetrical computation verifies
$\epsilon(h1_2)\epsilon(1_1 h')=\epsilon(hh')$, for all $h,h'\in H$. 

We have so far constructed a weak bialgebra structure on $H$. It remains to
check that $\sqcap^R(H)$ is isomorphic to the given separable Frobenius
algebra $R$. With this purpose, consider the map 
\begin{equation}\label{isoRHR}
\sigma: R \rightarrow H, \qquad 
r\mapsto \widetilde{\eta}(r\ox 1).
\end{equation} 
Since for any $s,r\in R$
\begin{equation}\label{eq:epsilon.tildeeta}
\epsilon\widetilde{\eta}(r\ox s)=
(\psi\ox\psi)\widetilde{\epsilon}\widetilde{\eta}(r\ox s)=
(\psi\ox\psi)\tau(r\ox s)=
\psi(sf_i)\psi(re_i)=
\psi(sr),
\end{equation}
and by \eqref{eq:bimon2}, 
$$
\sqcap^R\sigma (r)=
\widetilde{\eta}(e_i\ox 1)
\epsilon\widetilde{\eta}(r\ox f_i)
\stackrel{\eqref{eq:epsilon.tildeeta}}=\widetilde{\eta}(e_i\ox 1)\psi(f_ir)
=
\widetilde{\eta}(r\ox 1)
=
\sigma (r).
$$
This proves that $\sigma$ corestricts to a map $R\to \sqcap^R(H)$,
to be denoted also by $\sigma$. This restricted map $\sigma: R\to
\sqcap^R(H)$ is our candidate to establish the desired isomorphism of
separable Frobenius algebras. Since $\widetilde \eta$ is a $k$--algebra
homomorphism, so is $\sigma$. Comultiplicativity of $\sigma$ is proven using
the identity 
\begin{equation}\label{eq:PiR.tildeeta}
\sqcap^R\widetilde\eta(1\ox r)=
\widetilde\eta(e_i\ox 1)
\epsilon\widetilde \eta(1\ox f_ir)
\stackrel{\eqref{eq:epsilon.tildeeta}}=
\widetilde\eta(e_i\ox 1)\psi(f_ir)=
\widetilde\eta(r\ox 1).
\end{equation}
With this identity at hand, 
$$
\sigma(r)\widetilde{\eta}(e_i\ox 1) \ox \sqcap^R\widetilde{\eta}(1\ox f_i)
\stackrel{\eqref{eq:PiR.tildeeta}}=
\widetilde{\eta}(re_i\ox 1) \ox \widetilde\eta(f_i\ox 1)
=\sigma(re_i)\ox \sigma(f_i).
$$
Finally, $\sigma$ is also counital by applying \eqref{eq:epsilon.tildeeta} for
$s=1$. Since any map between Frobenius algebras, which is both an algebra and
a coalgebra homomorphism, is an isomorphism (cf. \cite[Proposition
A.3]{PastroStreet}), this proves that $\sigma$ is an isomorphism of
separable Frobenius algebras. 
\end{punto}

\begin{theorem}\label{thm:wba_obj}
Let $R$ be a separable Frobenius (co)algebra over a field $k$. A
bimonoid in the duoidal category $\mathsf{bim}(R^e)$ in Theorem
\ref{thm:bim_obj} is, equivalently, a weak bialgebra over $k$ whose right
subalgebra is isomorphic to $R$ (as a separable Frobenius (co)algebra). 
\end{theorem}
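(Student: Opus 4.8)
The plan is to deduce the theorem by combining the two constructions already spelled out and showing they are mutually inverse. Paragraph~\ref{frombimtowba} turns a weak bialgebra $(H,\mu,\eta,\Delta,\epsilon)$, whose separable Frobenius right subalgebra is $R:=\sqcap^R(H)$, into a bimonoid $(H,\widetilde\mu,\widetilde\eta,\widetilde\Delta,\widetilde\epsilon)$ in $\mathsf{bim}(R^e)$; paragraph~\ref{fromwbatobim} turns a bimonoid in $\mathsf{bim}(R^e)$ into a weak bialgebra structure on $H$ together with an isomorphism $\sigma\colon R\to\sqcap^R(H)$ of separable Frobenius (co)algebras, see~\eqref{isoRHR}. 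So I would check that, up to the isomorphism $\sigma$, these two assignments compose to the identity in both orders; well-definedness of the duoidal category $\mathsf{bim}(R^e)$ is already Theorem~\ref{thm:bim_obj}.

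First I would go around the loop ``weak bialgebra, then bimonoid, then weak bialgebra''. Everything is recovered verbatim: the multiplication reappears as the unique factorization of $\mu$ through $\pi^\circ_{H,H}$, so it is $\widetilde\mu\,\pi^\circ_{H,H}=\mu$; the unit reappears as $\widetilde\eta(1\ox 1)=\overline{\sqcap}^L(1)=1$; the comultiplication reappears as $\iota^\bullet_{H,H}\widetilde\Delta=\Delta$ by~\eqref{eq:Delta_fac}; and the counit reappears as $(\epsilon_{|R}\ox\epsilon_{|R})(\sqcap^R\ox\overline{\sqcap}^R)\Delta$, which equals $\epsilon$ because $\epsilon\sqcap^R=\epsilon=\epsilon\overline{\sqcap}^R$ (both immediate from $\epsilon(1_1)1_2=1$) and $\Delta$ is counital. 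This direction is routine.

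Then I would go around the loop ``bimonoid, then weak bialgebra, then bimonoid''. From $(H,\widetilde\mu,\widetilde\eta,\widetilde\Delta,\widetilde\epsilon)$ in $\mathsf{bim}(R^e)$, paragraph~\ref{fromwbatobim} produces a weak bialgebra on $H$ and the isomorphism $\sigma\colon R\to\sqcap^R(H)$; applying paragraph~\ref{frombimtowba} to that weak bialgebra yields a bimonoid in $\mathsf{bim}(\sqcap^R(H)^e)$, which I transport along $\sigma$ back to $\mathsf{bim}(R^e)$. The $R^e$--action on $H$ underlying the transported bimonoid is $s\ox r\mapsto\sigma(s)\,\overline{\sqcap}^L(\sigma(r))=\widetilde\eta(s\ox 1)\,\overline{\sqcap}^L(\widetilde\eta(r\ox 1))$. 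I would first prove $\overline{\sqcap}^L(\widetilde\eta(r\ox 1))=\widetilde\eta(1\ox r)$: using $\Delta(1)=\widetilde\eta(e_i\ox 1)\ox\widetilde\eta(1\ox f_i)$ (which follows from~\eqref{eq:bimon2} since $\widetilde\eta(1\ox 1)=1$), the algebra map property of $\widetilde\eta$, the counit formula~\eqref{eq:epsilon.tildeeta}, and~\eqref{eq:Frob-1}, one gets $\overline{\sqcap}^L(\widetilde\eta(r\ox 1))=\epsilon(\widetilde\eta(re_i\ox 1))\widetilde\eta(1\ox f_i)=\psi(re_i)\widetilde\eta(1\ox f_i)=\widetilde\eta(1\ox r)$. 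Hence the transported action is $s\ox r\mapsto\widetilde\eta(s\ox 1)\widetilde\eta(1\ox r)=\widetilde\eta(s\ox r)$, the original one, so the monoidal products $\circ$ and $\bullet$ on $H$ also coincide with the original ones. Then $\widetilde\mu$ and $\widetilde\Delta$ come back as the unique factorizations of $\mu=\widetilde\mu\,\pi^\circ_{H,H}$ and $\Delta=\iota^\bullet_{H,H}\widetilde\Delta$, the unit is $\widetilde\eta$, and for the counit I would use that the bimonoid counit produced by~\ref{frombimtowba} is $(\sqcap^R\ox\overline{\sqcap}^R)\Delta$; transporting it back along $\sigma^{-1}$ and invoking $\sigma\,\sqcap=\sqcap^R$ and $\sigma\,\overline{\sqcap}=\overline{\sqcap}^R$ (checked from the explicit form of $\Delta(1)$ above and the definitions of $\sqcap$ and $\overline{\sqcap}$) together with~\eqref{bimonoidcounit} gives back $\widetilde\epsilon$.

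The hardest part will be this second loop, and within it the bookkeeping forced by the fact that paragraph~\ref{frombimtowba} is phrased with base algebra $\sqcap^R(H)$ rather than $R$: one must pass through the duoidal-category isomorphism $\mathsf{bim}(\sqcap^R(H)^e)\cong\mathsf{bim}(R^e)$ induced by $\sigma^e$ and verify that every transported structure map literally equals the one we started with, not merely an isomorphic copy of it. All the identities needed for this --- in particular $\overline{\sqcap}^L\widetilde\eta(r\ox 1)=\widetilde\eta(1\ox r)$ and equation~\eqref{bimonoidcounit} --- are already available from paragraph~\ref{fromwbatobim}, so the theorem follows by assembling paragraphs~\ref{frombimtowba} and~\ref{fromwbatobim}.
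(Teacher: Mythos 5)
Your proposal is correct and follows essentially the same route as the paper: the paper's proof likewise reduces the theorem to checking that the constructions of Paragraphs \ref{frombimtowba} and \ref{fromwbatobim} are mutually inverse, verifying the ``weak bialgebra $\to$ bimonoid $\to$ weak bialgebra'' loop verbatim and the ``bimonoid $\to$ weak bialgebra $\to$ bimonoid'' loop up to the isomorphism $\sigma$, using the same key identities ($\sqcap^R=\widetilde\eta(\sqcap(-)\ox 1)$, $\overline\sqcap^L=\widetilde\eta(1\ox\sqcap(-))$, \eqref{eq:epsilon.tildeeta} and \eqref{bimonoidcounit}). Your explicit bookkeeping of the transport along $\sigma^e$ between $\mathsf{bim}(\sqcap^R(H)^e)$ and $\mathsf{bim}(R^e)$ is only a more pedantic rendering of what the paper phrases as ``the counits differ by the isomorphism $\sigma\ox\sigma$''.
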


\begin{proof}
In light of Paragraphs \ref{frombimtowba} and \ref{fromwbatobim},
we only have to prove the bijectivity of the correspondence described in
them. Starting with a weak bialgebra $(H,\mu,\eta,\Delta,\epsilon)$, and
applying to it the above constructions, the resulting weak bialgebra has the
same structure as $H$, as the following computations show. The resulting
multiplication is the unique map which yields $\mu \pi^\circ_{H,H}$ if
composed with $\pi^\circ_{H,H}$. Hence it is equal to $\mu$. The resulting
unit map multiplies an element of $k$ by $1 \overline \sqcap^L(1)=1$ hence it
is equal to $\eta$. The resulting comultiplication is equal to $\Delta$ by
\eqref{eq:Delta_fac}. The resulting counit sends $h\in H$ to 
$$
(\epsilon_{|R} \sqcap^R\ox \epsilon_{|R} \overline{\sqcap}^R)\Delta(h)
=(\epsilon\ox \epsilon)\Delta(h) 
=\epsilon(h).
$$
 
Conversely, consider a bimonoid
$(H, \widetilde{\mu},\widetilde{\eta},\widetilde{\Delta},
\widetilde{\epsilon})$ in $\mathsf{bim}(R^e)$ and the bimonoid obtained by
applying to it the constructions in Paragraphs \ref{frombimtowba} and
\ref{fromwbatobim}. By construction, they have identical multiplications and
comultiplications. Concerning the unit and the counit, note that in the weak
bialgebra in Paragraph \ref{fromwbatobim}, 
\begin{eqnarray*}
\sqcap^R(h)&=& 
\widetilde{\eta}(e_i\ox 1)(\psi\ox \psi)\widetilde{\epsilon}(h
\widetilde \eta(1\ox f_i))\\
&=&\widetilde{\eta}(e_i\ox 1) \psi(f_i\sqcap(h_1)) 
\psi\overline{\sqcap}(h_2) 
=\widetilde{\eta}(\sqcap(h_1)\ox 1) \epsilon(h_2) 
=\widetilde{\eta}(\sqcap(h)\ox 1),
\end{eqnarray*}
for all $h\in H$, where in the second equality we used the right
$R^e$--linearity of $\widetilde{\epsilon}$, \eqref{bimonoidcounit} and
\eqref{eq:LbimodJ}; and in the penultimate equality we used \eqref{eq:Frob-1}
and $\psi\overline{\sqcap}=\epsilon$. 
By similar computations also the idempotent maps $\overline\sqcap^L$
and $\overline\sqcap^R$ --- in the weak bialgebra associated in 
Paragraph \ref{fromwbatobim} to the bimonoid
$(H,\widetilde{\mu},\widetilde{\eta},\widetilde{\Delta},\widetilde{\epsilon})$ 
--- can be expressed as 
$$
\overline\sqcap^L=\widetilde\eta(1\ox \sqcap(-))
\qquad \textrm{and}\qquad 
\overline\sqcap^R=\widetilde\eta(\overline{\sqcap}(-)\ox 1).
$$
So the counits differ by the isomorphism $\sigma \otimes \sigma$ by
\eqref{bimonoidcounit}. Finally, in the bimonoid obtained by applying both
constructions, the unit map takes $s\ox r\in R\ox R^{op}$ to
$\sigma(s)\overline \sqcap^L\sigma(r)=\widetilde\eta(s\ox
1)\widetilde\eta(1\ox r)=\widetilde\eta(s\ox r)$. 
 \end{proof}

By Theorem \ref{thm:wba_obj}, an object of $\mathsf{bmd}(\mathsf{bim}(-^e))$
is given by a weak bialgebra. We make no notational distinction between a weak
bialgebra $H$ and the corresponding bimonoid in the bi(co)module category
$\mathsf{bim}(R^e)$, where $R$ is the `right' subalgebra $\sqcap^R(H)$. 

By \cite{Szlach:2001,Schauenburg:2003}, a weak bialgebra of `right'
subalgebra $R$ can be regarded as a right $R$--bialgebroid (or
`$\times_R$--bialgebra' in \cite{Takeuchi}) supplemented by a separable
Frobenius structure on $R$. However, since for arbitrary algebras $R$ we
cannot equip the category of $R^e$--bimodules with a duoidal structure (see
Remark \ref{rem:sF}), we cannot extend Theorem \ref{thm:wba_obj} to
interpret arbitrary bialgebroids as bimonoids in an appropriate duoidal
category. 

\begin{theorem}\label{thm:wba_mor}
Let $H$ and $H'$ be weak bialgebras with respective right subalgebras $R$
and $R'$. A morphism in $\mathsf{bmd}(\mathsf{bim}(-^e))$ from
$(R,H)$ to $(R',H')$ is, equivalently, a coalgebra map $Q:H\to H'$, rendering
commutative the diagrams
$$
\xymatrix @R=1pt {
H\ar[r]^-Q\ar[dddddddd]^(.4){\sqcap^R}&
H'\ar[dddddddd]_(.6){\sqcap^{\prime R}}
&
H\ar[r]^-Q\ar[dddddddd]^(.4){\overline\sqcap^R}&
H'\ar[dddddddd]_(.6){\overline\sqcap^{\prime R}}
&
{H\ar[r]^-{Q}\ar[dddddddd]^(.4){\sqcap^R\sqcap^L}}&
{ H'\ar[dddddddd]_(.6){\sqcap^{\prime R}\sqcap^{\prime L}}}
&
H\ox H \ar[r]^-E\ar[dddddddd]^-{\mu}&
H\ox H\ar[r]^-{Q\ox Q}&
H'\ox H'\ar[dddddddd]_-{\mu'}
\\ \\ \\ \\ 
&&&&&&
\\ \\ \\ \\
H\ar[r]_-Q&
H'
&
H\ar[r]_-Q&
H'
&
{H\ar[r]_-{Q}}&
{H'}
&
H\ar[rr]_-Q&&
H',}
$$
where
$E(h\ox h'):= h1_1\ox \sqcap^R (1_2)h'$.
\end{theorem}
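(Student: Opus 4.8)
The plan is to unwind Definition~\ref{def:mor_bmd} via Lemma~\ref{lem:bmd_morphism} and to substitute the explicit data of this section: the duoidal structure $\gamma,\mu_J,\Delta_I,\tau$ of $\mathsf{bim}(R^e)$ from Theorem~\ref{thm:bim_obj}, the comonoidal structure maps $\mathsf{bim}(q^e)_0^\circ=\mathsf{bim}(q^e)_0^\bullet=q^e$, $\mathsf{bim}(q^e)_2^\circ$, $\mathsf{bim}(q^e)_2^\bullet$ from Proposition~\ref{prop:bim_mor}, the bimonoid structure $\widetilde\mu,\widetilde\eta,\widetilde\Delta,\widetilde\epsilon$ attached to a weak bialgebra in Paragraph~\ref{frombimtowba}, and the recovered $k$-(co)algebra structure $\mu=\widetilde\mu\pi^\circ_{H,H}$, $\Delta=\iota^\bullet_{H,H}\widetilde\Delta$, $\epsilon=(\psi\ox\psi)\widetilde\epsilon$ from Paragraph~\ref{fromwbatobim}. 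All simplifications draw on the weak bialgebra identities of Section~\ref{sec:prelims}.

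First I would show that a morphism $(q,Q)$ is determined by its $k$-linear component $Q\colon H\to H'$, so that the datum $q$ may be suppressed. Since $\mathsf{bim}(q^e)$ is the identity on underlying vector spaces and on morphisms, $Q$ is a $k$-linear map $H\to H'$. Reading diagram~\eqref{diag1lemma} through $\Delta=\iota^\bullet_{H,H}\widetilde\Delta$ and $\mathsf{bim}(q^e)_2^\bullet$, and diagram~\eqref{diag2lemma} through $\epsilon=(\psi\ox\psi)\widetilde\epsilon$ together with $\psi'q=\psi$ (a coalgebra map of Frobenius coalgebras preserves counits), one finds that these two diagrams are equivalent to $\Delta'Q=(Q\ox Q)\Delta$ and $\epsilon'Q=\epsilon$; that is, $Q$ is a $k$-coalgebra map. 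Moreover, since the corestriction $\sqcap^R\colon H\to\sqcap^R(H)=R$ is a surjective coalgebra map, any $Q$ obeying $Q\sqcap^R=\sqcap^{\prime R}Q$ (diagram~(1), established below) forces $q$ to be the corestriction of $Q|_R$.

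Next I would translate the remaining conditions. The $R^{\prime e}$-bimodule-map property of $Q\colon(\mathsf{bim}(q^e))H\to H'$ --- expanded through the actions $\widetilde\eta(s\ox r)=s\overline\sqcap^L(r)$, $\widetilde\eta'$, and the dual maps $\widetilde q=\hat q$ of $q$ (equal precisely because $q$ commutes with the Nakayama automorphisms) --- reduces, using \eqref{piLbarpiR=piLbar}, \eqref{piRpiLcommute}, \eqref{counitalpropertiesofmaps} and that $Q$ is a coalgebra map, to diagrams~(1) and (2) (compatibility with $\sqcap^R$ and $\overline\sqcap^R$). Diagram~(3) is then equivalent to the Nakayama condition $q\theta=\theta'q$: indeed $\sqcap^R\sqcap^L=\theta\,\overline\sqcap^R$ by the identity $\sqcap^L\overline\sqcap^R=\sqcap^L$ from \eqref{piLbarpiR=piLbar} together with $(\sqcap^R\sqcap^L)|_R=\theta$, so (3) reads $Q\theta\overline\sqcap^R=\theta'\overline\sqcap^{\prime R}Q=\theta'Q\overline\sqcap^R$ using (2), which holds iff $Q$ commutes with $\theta,\theta'$ on the image $R$ of $\overline\sqcap^R$. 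Diagram~\eqref{diag3lemma}, after inserting $\gamma$ from \eqref{bim(Re)interchangelaw}, $\widetilde\mu$, $\mathsf{bim}(q^e)_2^\circ$, $\mathsf{bim}(q^e)_2^\bullet$ and using that $Q$ is an $R^{\prime e}$-bimodule and coalgebra map, collapses to the single identity $Q\widetilde\mu=\mu'(Q\ox Q)\iota^\circ_{H,H}$ of maps $H\circ H\to H'$; precomposing with the epimorphism $\pi^\circ_{H,H}$, using $\mu=\widetilde\mu\pi^\circ_{H,H}$ and the identification $\iota^\circ_{H,H}\pi^\circ_{H,H}(h\ox h')=h1_1\ox\sqcap^R(1_2)h'=E(h\ox h')$ (via $\widetilde\eta(s\ox r)=s\overline\sqcap^L(r)$, the separable Frobenius element $1_1\ox\sqcap^R(1_2)$ of $R$, and \eqref{piLbarpiR=piLbar}, \eqref{delta1}, \eqref{counitalpropertiesofmaps}), yields diagram~(4). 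Diagram~\eqref{diag4lemma} is automatically satisfied once $Q$ is a coalgebra map compatible with $\sqcap^R$, because the bimonoid unit is the fixed algebra map $\widetilde\eta$ determined by $R$ alone, so it contributes no further condition. For the converse, given a coalgebra map $Q$ obeying (1)--(4), put $q:=$ the corestriction of $Q|_R$ (well defined by (1)); it is a coalgebra map by the surjectivity argument above, and it commutes with the Nakayama automorphisms by (3), hence $q\in\mathsf{sfr}(R,R')$. Reversing the computations then shows that $Q$ is an $R^{\prime e}$-bimodule map and that $(q,Q)$ satisfies Lemma~\ref{lem:bmd_morphism}(b), so it is a morphism in $\mathsf{bmd}(\mathsf{bim}(-^e))$. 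The two assignments are mutually inverse (they are the identity on $Q$, and $q$ is forced) and functorial, the latter because composition in $\mathsf{bmd}(\mathsf{bim}(-^e))$ is composition of morphisms of monoidal comonads.

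The step I expect to be the main obstacle is the reduction of the large diagram~\eqref{diag3lemma} to identity~(4): it requires a careful chase through the two comonoidal structures of $\mathsf{bim}(q^e)$, the interchange law $\gamma$ and the splittings $\iota^\circ_{H,H},\iota^\bullet_{H,H}$, together with the recognition that $E=\iota^\circ_{H,H}\pi^\circ_{H,H}$. A secondary difficulty is verifying that the $R^{\prime e}$-bilinearity of $Q$ is captured by exactly (1)--(3) --- no more, no less --- and that the unit diagram~\eqref{diag4lemma} is genuinely redundant; both rest on the rigidity of the maps $\widetilde\eta,\sqcap^R,\overline\sqcap^R$ and on the identities \eqref{delta1}, \eqref{counitalpropertiesofmaps}.
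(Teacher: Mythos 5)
Your overall strategy --- unwinding Lemma \ref{lem:bmd_morphism}(b) through the explicit duoidal and comonoidal structure maps of $\mathsf{bim}(-^e)$ --- is the same as the paper's, and your treatment of \eqref{diag1lemma} and of the collapse of \eqref{diag3lemma} to the weak multiplicativity condition matches the paper's in outline. But the bookkeeping of where the first two diagrams of the statement come from, and of the role of \eqref{diag4lemma}, contains a genuine gap. First, \eqref{diag2lemma} is not merely the counitality $\epsilon'Q=\epsilon$: with $\widetilde\epsilon=(\sqcap^R\ox\overline\sqcap^R)\Delta$ it reads $(q\sqcap^R\ox q\overline\sqcap^R)\Delta=(\sqcap^{\prime R}\ox\overline\sqcap^{\prime R})\Delta'Q$, which, after composing with a counit on one tensor factor, yields the two identities $\sqcap^{\prime R}Q=q\sqcap^R$ and $\overline\sqcap^{\prime R}Q=q\overline\sqcap^R$; $k$-counitality of $Q$ is only a corollary. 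Second, you attribute diagrams (1) and (2) to the $R^{\prime e}$-bimodule-map property of $Q$, but that property only constrains $Q$ on products of elements of $H$ with elements of $\widetilde\eta(R^e)$; it says nothing about $\sqcap^R(h)$ or $\overline\sqcap^R(h)$ for a general $h\in H$, so it cannot yield (1) and (2). (Indeed the implication runs the other way: in the converse direction the bimodule property is \emph{derived} from the coalgebra-map property together with (1) and (2) via \eqref{idpiLdeltah}.)

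The decisive point is that diagrams (1) and (2) assert $\sqcap^{\prime R}Q=Q\sqcap^R$ and $\overline\sqcap^{\prime R}Q=Q\overline\sqcap^R$ with $Q$, not $q$, on one side; to pass from the identities extracted from \eqref{diag2lemma} to these, one must know that $q$ coincides with the restriction of $Q$ to $R$. That is exactly what \eqref{diag4lemma} delivers (evaluate it at $1_1\ox\sqcap^R(1_2)r$ and compose with $\epsilon'_{|R'}\ox\epsilon'_{|R'}\ox H'$ to obtain $q(r)=Q(r)$), so your claim that \eqref{diag4lemma} is ``genuinely redundant'' is unsupported and, in the forward direction, would leave $q$ unrelated to $Q$ and the theorem's first two diagrams unproved. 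Your remark that (1) ``forces $q$ to be the corestriction of $Q|_R$'' is circular there, since (1) is precisely what is being established. The repair is to read \eqref{diag2lemma} as the full compatibility with $\widetilde\epsilon$, use \eqref{diag4lemma} to identify $q=Q_{|R}$, and combine the two to get (1) and (2); and in the converse direction \eqref{diag4lemma} must be verified explicitly (using the right $R^{\prime e}$-linearity of $Q$ and \eqref{eq:qhat_tr}), not dismissed.
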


\begin{proof}
Let us take first a morphism in $\mathsf{bmd(bim}(-^e))$, and see that
it obeys the properties in the claim. A morphism in $\mathsf{bmd(bim}(-^e))$
is given by a morphism $q:R\rightarrow R'$ in $\mathsf{sfr}$ and a morphism
$Q:\mathsf{bim}(q^e)H\rightarrow H'$ in $\mathsf{bim}(R'^e)$, rendering
commutative the four diagrams in part (b) of Lemma
\ref{lem:bmd_morphism}. 

Let us see first that $Q$ is a coalgebra map. In order to prove that it is
comultiplicative, we need to see that the top row of 
$$
\xymatrix @C=40pt{
H\ar[d]_-Q\ar[r]^-{
\Delta}
\ar@{}[rrrd]|-{\eqref{diag1lemma}}&
H\ox H
\ar[r]^-{
\pi^\bullet_{H,H}}
&
H\bullet H
\ar[r]^-{
\mathsf{bim}(q^e)^\bullet_2}
&
H\bullet' 
H\ar[d]^-{Q\bullet'Q}
\ar[r]^-{
\iota^{\bullet'}_{
H, 
H}
}
&
H\ox 
H\ar[d]^-{Q\ox Q}\\
H'\ar[rr]^-{\Delta'} \ar@/_1.5pc/[rrrr]_-{\Delta'}^-{\eqref{eq:Delta_fac}}&&
H'\ox H'\ar[r]^-{\pi^{\bullet'}_{H',H'}}&
H'\bullet' H'\ar[r]^-{\iota^{\bullet'}_{H',H'}}&
H'\ox H'}
$$
is equal to the comultiplication $\Delta$ of $H$. Computing its value on $h\in
H$, we get 
$$
(\widetilde q (e'_k)e_i\ox 1)h_1(e_j\widetilde q (e'_l)\ox 1)\ox
(1\ox f_i \widetilde q (f'_k))h_2(1\ox \widetilde q (f'_l)f_j).
$$
It is equal to 
$$
(e_i\ox 1)h_1(e_j\ox 1)\ox (1\ox f_i)h_2(1\ox f_j)=
1_1h_11_{1'}\ox 1_2h_21_{2'}=\Delta(h)
$$ 
by 
\begin{equation}\label{eq:qtildeid}
\begin{array}{rl}
\widetilde q (e'_k)e_i\ox f_i \widetilde q (f'_k) 
=&\widetilde q (e'_k)\widetilde q (f'_k)e_i\ox f_i 
= \widetilde q (e'_kf'_k)e_i\ox f_i 
= \widetilde q (1')e_i\ox f_i 
= e_i\ox f_i 
\quad \textrm{and}\\
e_j\widetilde q (e'_l)\ox \widetilde q (f'_l)f_j
=& e_j\ox \widetilde q (f'_l) \theta \widetilde q (e'_l)f_j
= e_j\ox \widetilde q (f'_l) \widetilde q \theta'(e'_l)f_j
=e_j\ox \widetilde q (e'_l) \widetilde q (f'_l)f_j
= e_j\ox f_j.
\end{array}
\end{equation}
This proves comultiplicativity of $Q$.
In order to see that $Q$ is counital as well, observe that condition
\eqref{diag2lemma} takes now the form 
$$
\xymatrix@C=70pt{
H\ar[d]_-Q \ar[r]^-{\raisebox{7pt}{${}_{(\sqcap^R\ox \overline \sqcap^R)\Delta}$}}&
R\ox R^{op}
\ar[r]^-{q\ox q^{op}}&
R'\ox R^{\prime op}\ar@{=}[d]\\
H'
\ar[rr]_-{(\sqcap^{\prime R}\ox \overline \sqcap^{\prime R})\Delta'}
&&
R'\ox R^{\prime op}.}
$$
Composing both paths around it with $R'\ox \epsilon'_{|R'}$ and with
$\epsilon'_{|R'}\ox R'$, respectively, we obtain 
\begin{equation}\label{eq:qPi}
\sqcap^{\prime R}Q(h)=q \sqcap^R(h)\qquad \textrm{and}\qquad 
\overline
\sqcap^{\prime R}Q(h)=q\overline \sqcap^R(h);
\end{equation}
and composing either one of these equalities with $\epsilon'_{|R'}$ we have the
counitality of $Q$ proven. 

Let us check now that $Q$ satisfies the required weak multiplicativity
condition; that is, it renders commutative the last diagram in the
claim. Since $(q,Q)$ is a morphism in $\mathsf{bmd(bim}(-^e))$ by
assumption, it renders commutative diagram \eqref{diag3lemma} for any 
$R^e$--bimodules $A$ and $B$. Evaluating both paths around it
on an arbitrary element $(a\bullet h) \circ (b\bullet h')$, and using
the commutativity of $\widetilde{q}$ with the Nakayama automorphisms, 
the $R'^e$--bilinearity of $Q$, \eqref{eq:Frob4} and that $\widetilde{q}$ is
an algebra map together with \eqref{eq:Frob1}, yields the equivalent
form 
\begin{equation}\label{eq:diagr_2.3}
\begin{array}{l}
((e_i\ox 1)a(e_je_p\ox f_l)\circ' (f_p\ox e_l)b(e_{j'}\ox 1))
\bullet' Q((1\ox f_i)h(1\ox f_j)h'(1\ox f_{j'}))=\\
((e_i\ox 1)a(e_j\ox f_l)\circ' 
(e_{i'}\ox e_l)b(e_{j'}\ox 1))\bullet' 
Q((1\ox f_i)h(e_k\ox f_j))Q((f_k\ox f_{i'})h'(1\ox f_{j'}))
\end{array}
\end{equation}
of condition \eqref{diag3lemma} on $(q,Q)$. Taking $A=B=R^e\ox R^e$ with the
$R^e$--actions
$$
(r\ox s)((x\ox y)\ox (v\ox w))(r'\ox s'):=(rx\ox ys) \ox (vr'\ox s'w),
$$
putting $a=b=1\ox1\ox 1\ox 1$, and applying $(\iota^{\circ'}\ox
H')\iota^{\bullet'}$ to the resulting equality, by the $R^{\prime
e}$--bilinearity of $Q$ and \eqref{eq:qtildeid} we obtain
\begin{eqnarray*}
&&
e_i\ox 1\ox e_je_p\ox f_l\ox f_p\ox e_l\ox e_{j'}
\ox 1\ox Q((1\ox f_i)h(1\ox f_j)h'(1\ox f_{j'}))
\\ &=&
e_i\ox 1\ox e_j
\ox 
f_l
\ox 
e_{i'}\ox e_l\ox e_{j'}
\ox 1
\ox 
Q((1\ox f_i)h(e_k\ox f_j))
Q((f_k\ox f_{i'})h'(1\ox f_{j'})).
\end{eqnarray*}
Applying $\psi$ to the first, third, fifth and seventh tensorands in the last
equality, we get 
\begin{equation*}
 1\ox f_l\ox e_l \ox 1 \ox Q(hh')= 
 1\ox f_l\ox e_l \ox 1 \ox Q(h(e_k\ox 1))Q((f_k\ox 1)h').
\end{equation*}
This is equivalent to
$$
Q(hh')= 
Q(h1_1)Q(\sqcap^R(1_2)h'),
$$
that is, commutativity of the last diagram in the claim. 

Next we check that $q$ can be uniquely reconstructed from $Q$ ---
namely, it is the (co)restriction to $R\to R'$ of $Q:H\rightarrow 
H'$. Evaluating the equal paths around 
$$
\xymatrix@R=25pt{
R^e\ar[d]_-{\Delta_I}\ar[r]^-{q^e}\ar@{}[rrdd]|-{\eqref{diag4lemma}}&
R^{\prime e}\ar[r]^-{\Delta_{I'}}&
R^{\prime e} \bullet' R^{\prime e} 
\ar[r]^-{\iota^{\bullet'}_{R^{\prime e},R^{\prime e}}}
\ar[dd]^-{R^{\prime e}\bullet'\widetilde \eta'}&
R^{\prime e}\ox R^{\prime e}\ar[ddd]^-{R^{\prime e}\ox \widetilde \eta'}\\
R^e\bullet R^e \ar[d]_-{\mathsf{bim}(q^e)^\bullet_2}\\
R^e\bullet' R^e \ar[r]^-{R^e\bullet' \widetilde \eta}
\ar[d]_-{\iota^{\bullet'}_{R^e,R^e}}&
R^e\bullet' H\ar[r]^-{q^e\bullet' Q}\ar[d]^-{\iota^{\bullet'}_{R^e,H}}&
R^{\prime e}\bullet' H'\ar[rd]^-{\iota^{\bullet'}_{R^{\prime e},H'}}\\
R^e\ox R^e \ar[r]_-{R^e\ox\widetilde \eta}&
R^e\ox H \ar[rr]_-{q^e\ox Q}&&
R^{\prime e}\ox H'}
$$
at $1_1\ox \sqcap^R(1_2)r\in R^e$, and composing the result with
$\epsilon'_{|R'}\ox \epsilon^{\prime}_{|R^{\prime}} \ox H'$, we conclude
$q(r)=Q(r)$. 

Comparing this identity $q(r)=Q(r)$ with \eqref{eq:qPi}, the 
compatibility of $Q$ with $\sqcap^R$ and $\overline \sqcap^R$
--- that is, commutativity of the first two diagrams in the claim ---
follows. Commutativity of the third diagram --- that is, compatibility of $Q$
with $\sqcap^R\sqcap^L$ --- is equivalent to the assumed commutativity of $q$
with the Nakayama automorphisms. 

Conversely, assume that $Q:H\rightarrow H'$ is a coalgebra map rendering
commutative the four diagrams in the statement. 
We construct its mate $q:R\to R'$ together with whom they constitute a
morphism in $\mathsf{bmd}(\mathsf{bim}(-^e))$. 

By commutativity of any of the first two diagrams, $Q$ restricts to a map
$q:R\to R'$. Let us see that the restriction $q:R\rightarrow R'$ of $Q$ is
a morphism in $\mathsf{sfr}$. First of all, that it is a coalgebra map. Take
$y\in R$. Since $Q$ respects the counits, $\epsilon'_{\vert R'}q(y)=
\epsilon'Q(y)=\epsilon_{\vert R}(y)$. Moreover, $q$ is comultiplicative by 
\begin{eqnarray*}
Q(y)1'_1\ox \sqcap'^R(1'_2)
&\stackrel{\eqref{idpiLdeltah}}=&
Q(y)_1\ox \sqcap'^R(Q(y)_2)
=
Q(y_1)\ox \sqcap'^RQ(y_2)\\
&=& Q(y_1)\ox Q\sqcap^R(y_2)
\stackrel{\eqref{idpiLdeltah}}=
Q(y1_1)\ox Q\sqcap^R(1_2).
\end{eqnarray*}
By commutativity of the third diagram in the claim, $q$ commutes with the
Nakayama automorphisms. Hence it is a morphism in $\mathsf{sfr}$, as needed. 

In order for $Q$ to be a morphism in $\mathsf{bim}(R^{\prime e})$, it
has to be an $R^{\prime e}$--bimodule map. We check that it is a right
$R'$--module map; its compatibility with the other three $R'$--actions is
similarly proven. 
\begin{eqnarray*}
Q(h\widetilde{q}(r'))&=&
Q(h1_1)\epsilon'(q\sqcap^R(1_2)r')\stackrel{\eqref{idpiLdeltah}}=
Q(h_1)\epsilon'(q\sqcap^R(h_2)r')\\
&=&Q(h_1)\epsilon'(\sqcap'^RQ(h_2)r')
\stackrel{\eqref{piRproduct}}=Q(h_1)\epsilon'(Q(h_2)r')
=Q(h)_1\epsilon'(Q(h)_2r')
=Q(h)r',
\end{eqnarray*}
illustrating that $Q$ is a morphism in $\mathsf{bim}(R^{\prime e})$. 

It remains to show that the morphisms $q:R\to R'$ in $\mathsf{sfr}$ and
$Q:H\to H'$ in $\mathsf{bim}(R^{\prime e})$ obey the conditions in part
(b) of Lemma \ref{lem:bmd_morphism}. The following commutative diagrams
show that \eqref{diag1lemma} and \eqref{diag2lemma} hold.
$$
\xymatrix@R=15pt@C=30pt{
H\ar[ddd]_-{Q}\ar[rd]_-{\Delta}
\ar[r]^-{\widetilde{\Delta }} & 
H\bullet H \ar[r]^-{\mathsf{bim}(q^e)^\bullet_2}
\ar@{}[d]|-{\eqref{eq:qtildeid}}& H\bullet' H\ar[ddd]^-{Q\bullet^{\prime} Q} 
&&
H\ar[ddd]_-{Q}
\ar[r]^-{\Delta}&
H\ox H\ar[r]^-{\sqcap^R\ox \overline \sqcap^R}
\ar[ddd]^-{Q\ox Q}& 
R\ox R^{op}\ar[ddd]^-{q\ox q^{op}}\\
&H\ox H\ar[ur]_-{\pi^{\bullet'}_{H,H}}\ar[d]^-{Q\ox Q}
& &&& 
\\
&H'\ox H' \ar[dr]^-{\pi^{\bullet'}_{H',H'}}
&& && 
\\
H'\ar[rr]_-{\widetilde{\Delta}'}
\ar[ur]^-{\Delta'} && 
H'\bullet^{\prime} H'
&& 
H'\ar[r]_-{\Delta'}&
H'\ox H'\ar[r]_-{\sqcap^{\prime R}\ox \overline \sqcap^{\prime R}}& 
R'\ox R^{\prime op}} 
$$ 
Commutativity of diagram \eqref{diag3lemma} was seen to be equivalent
to \eqref{eq:diagr_2.3}. It holds by the following computation, 
for all $h,h'\in H$, $a\in A$ and $b\in B$ for any
$R^e$--bimodules $A$ and $B$. 
\begin{eqnarray*}
&&\hspace{-3pt}
((e_i\ox 1)a(e_j\ox f_l)\circ' (e_{i'}\ox e_l)b(e_{j'}\ox 1))\bullet' 
Q((1\ox f_i)h(e_k\ox f_j))Q((f_k\ox f_{i'})h'(1\ox f_{j'}))\\
&\!\!=&\!\!((e_i\ox 1)a(e_j\ox f_l)\circ' (e_{i'}\ox e_l)b(e_{j'}\ox 1))\bullet' 
Q((1\ox f_i)h(1\ox f_j)(1\ox f_{i'})h'(1\ox f_{j'}))\\
&\!\!=&\!\!((e_i\ox 1)a(e_je_{i'}\ox f_l)\circ' (f_{i'}\ox e_l)b(e_{j'}\ox 1))
\bullet' Q((1\ox f_i)h(1\ox f_j)h'(1\ox f_{j'}))
\end{eqnarray*}
In the first equality we used the weak multiplicativity of $Q$, holding true
by assumption. In the second equality we used \eqref{eq:Frob4} and
\eqref{eq:Frob2}. 

Commutativity of diagram \eqref{diag4lemma} is checked by the computation
\begin{eqnarray*}
(q(1_1)\ox q(y))\bullet' Q\widetilde{\eta}(x\ox \sqcap^R(1_2))
&=& (1\ox q(y))\bullet' Q\widetilde{\eta}(x\ox \sqcap^R(1_2))
(1\ox q(1_1))\\
&=& (1\ox q(y))\bullet' Q(\widetilde{\eta}(x\ox \sqcap^R(1_2))
(1\ox \widetilde qq(1_1)))\\
&=& (1\ox q(y))\bullet' 
Q\widetilde{\eta}(x\ox \widetilde{q}q(1_1)\sqcap^R(1_2))\\
&=&(1\ox q(y))\bullet' 
Q\widetilde{\eta}(x\ox \widetilde{q}(1'_1)\widetilde{q}\sqcap'^R(1'_2))\\
&=& (1\ox q(y))\bullet' 
Q\widetilde{\eta}(x\ox 1)\\
&=&(1\ox q(y))\bullet'\widetilde{\eta}'(q(x)\ox 1),
\end{eqnarray*}
for any $x,y\in R$. In the first equality we used the definition of $\bullet'$ 
(cf. \eqref{eq:bullet_quotient}). In the second and third equalities we used 
the right $R'^e$--linearity of $Q$ and the right $R^e$--linearity of
$\widetilde \eta$, respectively. In the fourth equality we used
\eqref{eq:qhat_tr}; in the penultimate equality we used that
$\widetilde{q}$ is an algebra map together with
\eqref{counitalpropertiesofmaps}; and in the last equality we used that $Q$
restricts to $q$ on $R$. 
\end{proof}

We conclude by Theorem \ref{thm:wba_obj} and Theorem \ref{thm:wba_mor} that
the category $\mathsf{bmd}(\mathsf{bim}(-^e))$ has weak bialgebras as its
objects and morphisms as in Theorem \ref{thm:wba_mor}. Thus we can regard it
as the category of weak bialgebras and introduce the notation $\mathsf{wba}$
for it.

Applying results from \cite{Szlachanyi:2003}, we know from Lemma
\ref{lem:bmd_morphism} that the morphisms in $\mathsf{wba}$ are closed under 
the composition. But it is also easy to see this directly. Indeed, if both
morphisms $Q:H\to H'$ and $Q':H'\to H''$ render commutative the
first three diagrams in Theorem \ref{thm:wba_mor}, then so does their
composite evidently. If $Q$ and $Q'$ make commutative the last diagram Theorem
\ref{thm:wba_mor}, then so does their composite since $Q$ is a morphism of
$R'$--bimodules and \eqref{eq:qtildeid} holds: for any $h,h'\in H$,
\begin{eqnarray*}
Q'Q(hh')&=&
Q'[Q(h1_1)Q(\sqcap^R(1_2)h')]=
Q'[Q(h1_1)1'_1]Q'[\sqcap^{\prime R}(1'_2)Q(\sqcap^R(1_2)h')]\\
&=&Q'Q(h1_1\widetilde q(1'_1))
Q'Q(\widetilde q\sqcap^{\prime R}(1'_2)\sqcap^R(1_2)h')
\stackrel{\eqref{eq:qtildeid}}=
Q'Q(h1_1)Q'Q(\sqcap^R(1_2)h').
\end{eqnarray*}

While the notion of weak bialgebra is self-dual, the morphisms in Theorem
\ref{thm:wba_mor} are not. (They are coalgebra homomorphisms but not algebra
homomorphisms.) The dual counterpart of $\mathsf{wba}$; that is, a category of
weak bialgebras with the dual notion of morphisms, would be obtained from
a construction based on a symmetric form of Definition \ref{def:mor_bmd} (see
the discussion in Remark \ref{rem:symm}). 

The morphisms in Theorem \ref{thm:wba_mor} look different from all other
kinds of morphisms between weak bialgebras discussed previously in
\cite[Section 1.4]{Szlach:Gal_fin}. However, if we restrict to morphisms
$Q:H\to H'$ whose (co)restriction $q:\sqcap^R(H)\to \sqcap^{\prime R}(H')$ is
the identity map, they are in particular unit preserving
$\sqcap^R(H)=\sqcap^{\prime R}(H')$--bimodule maps; hence also homomorphisms
of algebras (see also Remark \ref{rem:double_comon}). That is to say,
they are {\em `strict morphisms'} of weak bialgebras in the sense of
\cite[Section 1.4]{Szlach:Gal_fin}. For usual (non-weak) 
bialgebras $H$ and $H'$ over the field $k$, any morphism $H\to H'$ in Theorem
\ref{thm:wba_mor} (co)restricts to the identity map $\sqcap^R(H)\cong k\to
\sqcap^{\prime R}(H')\cong k$. Hence $\mathsf{wba}$ contains the usual
category of $k$--bialgebras --- in which morphisms are algebra and coalgebra
homomorphisms --- as a full subcategory. 

\section{The ``free vector space'' functor.}
\label{sec:functor_k}

Let $k$ be a field. For any small category $A$ with finite object set $X$, let
$kA$ denote the free $k$--vector space spanned by the set of morphisms in
$A$. Consider the unique $k$--coalgebra structure $(kA,\Delta,\epsilon)$ for
which the elements of $A$ are group-like, that is, $\Delta (a) = a \ox a$,
$\epsilon (a) = 1$ for all $a \in A$. Let $t: X \leftarrow A \rightarrow X :s$
be the target and source maps, respectively, in the category $A$. The vector
space $kA$ is an algebra with the multiplication determined by the rule $ab =
\delta_{s(a),t(b)} a . b$, where $\delta $ denotes Kronecker's `delta
operator' and $.$ is the composition in $A$. The unit of $kA$ is given by $1 =
\sum_{x \in X} x$ (where $x$ denotes also the identity morphism at
$x$). With these algebra and coalgebra structures $kA$ turns out to be a weak
bialgebra over $k$ (see for example \cite[Section 3.2.2, page
187]{Bohm:2009}, or \cite[Section 2.5]{NykVai} for the case when $A$ is a
groupoid hence $kA$ is a weak Hopf algebra). For any $a \in A$, we get
$\sqcap^R (a) = {s(a)} = \overline{\sqcap}^L(a)$ and $\overline{\sqcap}^R(a) =
{t(a)} = \sqcap^L(a)$. 

This assignment gives the object map of a functor $\mathsf k
:\mathsf{cat} \to \mathsf{wba}$ --- from the category $\mathsf{cat}$ of small
categories with finitely many objects to $\mathsf{wba}$ --- as Proposition
\ref{fAA'} shows. 

\begin{proposition}\label{fAA'}
Let $A$ and $A'$ be small categories with finite object sets $X$
and $X'$, respectively. For any functor $f:A \to A'$, the linear extension
$kf:kA\to kA'$ is a morphism in $\mathsf{wba}$. 
\end{proposition}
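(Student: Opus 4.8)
The plan is to verify that $kf\colon kA\to kA'$ satisfies the four conditions characterizing morphisms in $\mathsf{wba}$ that are listed in Theorem \ref{thm:wba_mor}. First I would note that $kf$ is a coalgebra map: since $f$ sends morphisms of $A$ to morphisms of $A'$, it sends group-like elements to group-like elements, and the linear extension of a map between group-like bases is automatically comultiplicative and counital. This disposes of the standing hypothesis on $Q$ in Theorem \ref{thm:wba_mor}.

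Next I would check compatibility with the idempotent maps. Using the explicit formulas recalled just before the proposition --- namely $\sqcap^R(a)=s(a)$, $\overline\sqcap^R(a)=t(a)$, and $\sqcap^L(a)=t(a)$ for a morphism $a\in A$ (and similarly in $A'$) --- compatibility of $kf$ with $\sqcap^R$ reduces to $s'(f(a))=f(s(a))$, which is exactly functoriality of $f$ on objects (the source of the image is the image of the source). Likewise compatibility with $\overline\sqcap^R$ reduces to $t'(f(a))=f(t(a))$, and compatibility with $\sqcap^R\sqcap^L$ (the Nakayama automorphism of the right subalgebra) becomes $s'(f(a))=f(s(a))$ again, since $\sqcap^R\sqcap^L(a)=s(t(a))=s(a)$ because $t(a)$ is an identity morphism. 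All three diagrams thus commute on the basis $A$, hence everywhere by linearity.

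Then I would handle the last (weak multiplicativity) diagram, which asks that $kf(hh')=kf(h1_1)\,kf(\sqcap^R(1_2)h')$ where $E(h\ox h')=h1_1\ox\sqcap^R(1_2)h'$. The key computational input is the explicit form of $\Delta(1)$ in $kA$: since $1=\sum_{x\in X}x$ with each $x$ group-like, $\Delta(1)=\sum_{x\in X}x\ox x$, so $1_1\ox\sqcap^R(1_2)=\sum_x x\ox s(x)=\sum_x x\ox x$ (identities are their own sources). Evaluating on basis elements $a,b\in A$, the right-hand side becomes $kf(a\cdot\sum_x x)\,kf(\sum_x x\cdot b)$; using $ax=\delta_{s(a),x}a$ and $xb=\delta_{x,t(b)}b$ this collapses to $\delta_{s(a),t(b)}\,f(a)f(b)$ when $s(a)=t(b)$, and one checks both sides vanish otherwise; since $f$ preserves composition and identities, $f(a)f(b)=\delta_{s'(f(a)),t'(f(b))}f(a).f(b)=f(a.b)=kf(ab)$ exactly when $s(a)=t(b)$. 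So the diagram commutes on basis elements, hence by bilinearity of both sides in $(h,h')$.

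The main obstacle is mostly bookkeeping rather than conceptual: one must be careful that the maps appearing in the last diagram of Theorem \ref{thm:wba_mor} are indeed linear in each argument (so that checking on the group-like basis suffices), and that the Kronecker-delta case analysis for the product in $kA$ is handled on both sides consistently; there is no deep difficulty, since everything is pinned down by the elementary identities $\sqcap^R(a)=s(a)$, $\sqcap^L(a)=\overline\sqcap^R(a)=t(a)$ and $\Delta(1)=\sum_x x\ox x$ together with functoriality of $f$. Finally I would remark that $\mathsf k$ preserves identities and composition, so it is genuinely a functor $\mathsf{cat}\to\mathsf{wba}$.
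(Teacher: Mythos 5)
Your proof follows essentially the same route as the paper's: coalgebra-map property from preservation of group-likes, the first two diagrams from $s'f=fs$ and $t'f=ft$, and the weak multiplicativity diagram via the computation $1_1\ox\sqcap^R(1_2)=\sum_x x\ox x$ and the Kronecker-delta bookkeeping. The one flaw is in your treatment of the third diagram: you assert $\sqcap^R\sqcap^L(a)=s(t(a))=s(a)$, but the source of the identity morphism at the object $t(a)$ is $t(a)$ itself, so in fact $\sqcap^R\sqcap^L(a)=t(a)=\overline\sqcap^R(a)$ (this is the paper's observation that the third diagram is redundant given the second, since $\sqcap^L=\overline\sqcap^R$ on $kA$ and $\sqcap^R\overline\sqcap^R=\overline\sqcap^R$ by \eqref{piLbarpiR=piLbar}). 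The required identity therefore reduces to $t'(f(a))=f(t(a))$ rather than to source-preservation; the diagram still commutes by functoriality of $f$, so the conclusion is unaffected, but the intermediate computation as written is incorrect and should be fixed.
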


\begin{proof}
First, note that $kf$ is a morphism of $k$--coalgebras because it sends
group-like elements to group-like elements; and group-like elements
provide a basis in $kA$. 
We need to show that the four diagrams in Theorem \ref{thm:wba_mor} commute
for $Q = kf$. As for the first two concerns, for any basis element $a \in A$ 
\begin{eqnarray*}
(kf)\sqcap^R(a)&=&
(kf) {s(a)}=
f {s(a)} = 
{s^{ \prime}f(a)}=
\sqcap^{ \prime R}f(a)=
\sqcap^{ \prime R}(kf)(a)\\
(kf)\ \overline{\sqcap}^R(a)&=&
(kf){t(a)}\, =
f{t(a)} = 
{t^{ \prime}f(a)}= 
\overline{\sqcap}^{ \prime R}f(a)=
\overline{\sqcap}^{ \prime R}(kf)(a).
\end{eqnarray*}
The commutativity of the third diagram in Theorem \ref{thm:wba_mor} becomes
redundant by $\sqcap^L=\overline \sqcap^R$. In order to check that
the fourth diagram commutes, let us first note that any element 
in the range of the map $E=(-)1_1\ox \sqcap^R(1_2)(-):kA \ox kA\to kA\ox
kA$ is of the form 
\begin{equation*}
\sum_{x\in X} (\sum_{a\in A} \lambda_a a) x \ox 
x(\sum_{a'\in A}\lambda_{a'}a')=
\sum_{x\in X}(\sum_{a : s(a)=x} \lambda_a a)
\ox (\sum_{ a': t(a')=x} \lambda_{a'} a')=
\sum_{ a,a': s(a)=t(a')} \lambda_a\lambda_{a'}a\ox a',
\end{equation*}
and if $s(a)=t(a')$ then
$$
(kf)\mu(
a\ox a')=f(a . a')=f(a) . f(a')=
\mu'(kf\ox kf)(a\ox a').
$$
\end{proof}

\section{On group-like elements in a weak bialgebra.}
\label{sec:gr-like}

In forthcoming Section \ref{sec:adjoint} we are going to construct the right
adjoint $\mathsf g$ of the ``free vector space'' functor $\mathsf k$ in Section
\ref{sec:functor_k}. Recall that for any small category $A$, the set of
morphisms is in a bijective correspondence with the set of functors from
the interval category $\mathbbm{2}=\xymatrix{S\ar@(dl,ul)[]^-{}\ar[r]^-{a} &
T\ar@(dr,ur)[]^-{}}$ to $A$. 
So if the right adjoint $\mathsf g$ of $\mathsf k$ exists, then for any weak
bialgebra $H$ over the field $k$, the set of morphisms in $\mathsf g 
(H)$ is isomorphic to $\mathsf{cat}(\mathbbm{2},\mathsf g (H))\cong
\mathsf{wba}(k\mathbbm{2},H)$. This motivates the study
of the set $\mathsf{wba}(k\mathbbm{2},H)$ for any weak bialgebra $H$, with the
aim of finding the way to look at it as the set of morphisms in an appropriate
category. 

\begin{definition}\label{groupoidlike-set}
For any weak bialgebra $H$, define the subset 
$$
\mathsf g (H) := \{g\in H\ : \ 
\Delta (g)=g\ox g,\ \epsilon (g) =1,\ 
\Delta \sqcap^R (g)=\sqcap^R (g) \ox \sqcap^R (g),\ 
\Delta \overline \sqcap^R (g)=\overline \sqcap^R (g) \ox 
\overline \sqcap^R (g)\}
$$ 
of the set of group-like elements in $H$. 
\end{definition}

\begin{remark}\label{rem:smaller}
Let us stress that for a general weak bialgebra $H$, the set $\mathsf g (H)$
is strictly smaller than the set $\{g\in H\ :\ \Delta (g)=g\ox g,\ \epsilon
(g) =1\}$ of group-like elements. 

For example, let us consider the free $k$--vector space on the basis provided
by the morphisms of the interval category $\mathbbm{2}$. It is a
weak bialgebra via the dual of the weak bialgebra structure in Section
\ref{sec:functor_k}. In terms of Kronecker's delta, it has the unique
multiplication such that $p q=\delta_{p,q}p$, for all $p,q\in \{S,T,a\}$, the
unit $S+T+a$, the unique comultiplication for which 
$$
\Delta(S)= S \ox S,\qquad
\Delta(T)= T \ox T,\qquad
\Delta(a)= T \ox a + a \ox S
$$
and the unique counit for which $\epsilon(S)=\epsilon(T)=1$ and
$\epsilon(a)=0$. In this weak bialgebra 
$$
\sqcap^R(S)=\overline\sqcap^R(S)= S+ a
\qquad
\sqcap^R(T)=\overline\sqcap^R(T)= T
\qquad
\sqcap^R(a)=\overline\sqcap^R(a)= 0.
$$
Thus there are two group-like elements $S$ and $T$ but only
$T$ belongs to $\mathsf g(k\mathbbm{2})$. 

As we shall see below, there are some distinguished classes of weak
bialgebras $H$, however, in which $\mathsf g(H)$ coincides with the set of
group-like elements in $H$. 

In contrast to usual bialgebras, where the unit element is always group-like,
there are weak bialgebras $H$ in which the set of group-like elements (and
therefore the subset $\mathsf g(H)$) is empty. 
Consider, for example, the groupoid with two objects $S$ and $T$ and only one
non-identity isomorphism $a:S\to T$. The free $k$--vector space on the
basis provided by its morphisms, is a weak bialgebra via the dual of the weak
bialgebra structure in Section \ref{sec:functor_k}. It has the unique
multiplication such that $pq=\delta_{p,q}p$, for all $p,q\in
\{S,T,a,a^{-1}\}$, the unit $S+T+a+a^{-1}$, the unique comultiplication for
which
$$
\Delta(S)= S \ox S+a^{-1}\ox a,\ 
\Delta(T)= T \ox T+a\ox a^{-1},\ 
\Delta(a)= T \ox a + a \ox S,\ 
\Delta(a^{-1})= S \ox a^{-1} + a^{-1} \ox T,
$$
and the unique counit for which $\epsilon(S)=\epsilon(T)=1$ and
$\epsilon(a)=\epsilon(a^{-1})=0$. In this weak bialgebra there is
no group-like element.
\end{remark}

\begin{lemma}\label{lem:gr-like}
For a weak bialgebra $H$, any element $g\in H$ such that $\Delta(g)=g\ox g$
obeys the following identities. 
\begin{itemize}
\item[{(i)}] $g\sqcap^R(g)=g=\overline \sqcap^R(g)g$ and
 $\sqcap^L(g)g=g=g\overline \sqcap^L(g)$.
\item[{(ii)}] All elements $\sqcap^R(g)$, $\overline \sqcap^R(g)$,
 $\sqcap^L(g)$, $\overline \sqcap^L(g)$ are idempotent.
\item[{(iii)}] If in addition $g\in \mathsf g(H)$, then $\sqcap^R
\sqcap^L(g)= \overline \sqcap^R(g)$ and $\sqcap^L\sqcap^R(g) =\overline
\sqcap^L(g)$. 
\end{itemize}
\end{lemma}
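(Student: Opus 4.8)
The plan is to prove the three parts in order, exploiting the ``counital properties'' \eqref{counitalpropertiesofmaps} and the defining formulae \eqref{sqcapR&L}, together with the fact that a group-like element is sent by $\Delta$ to $g\ox g$ so that Sweedler notation collapses: $g_1=g_2=g$.

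For part (i): the counital properties \eqref{counitalpropertiesofmaps} read $h_1\sqcap^R(h_2)=\sqcap^L(h_1)h_2=\overline\sqcap^R(h_2)h_1=h_2\overline\sqcap^L(h_1)=h$ for all $h\in H$. Specializing to $h=g$ and using $\Delta(g)=g\ox g$, each Sweedler factor becomes $g$, so all four expressions give $g\sqcap^R(g)=\sqcap^L(g)g=\overline\sqcap^R(g)g=g\overline\sqcap^L(g)=g$. That is precisely the four claimed identities, so part (i) is essentially immediate.

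For part (ii): I would apply the maps $\sqcap^R$, $\overline\sqcap^R$, $\sqcap^L$, $\overline\sqcap^L$ to the identities in (i) and use the module-map properties \eqref{R-moduleproperties} and \eqref{piRproduct}. For instance, from $g=g\sqcap^R(g)$, applying $\sqcap^R$ and using $\sqcap^R(h\sqcap^R(h'))=\sqcap^R(h)\sqcap^R(h')$ from \eqref{R-moduleproperties} gives $\sqcap^R(g)=\sqcap^R(g)\sqcap^R(g)$. Similarly $\overline\sqcap^R(g)g=g$ with the identity $\overline\sqcap^R(\overline\sqcap^R(h)h')=\overline\sqcap^R(h)\overline\sqcap^R(h')$ yields idempotency of $\overline\sqcap^R(g)$; and the $\sqcap^L$, $\overline\sqcap^L$ statements follow symmetrically from the other two relations in (i) using the left-handed versions in \eqref{R-moduleproperties}. (Alternatively one can use the $\sqcap^R$-product identities \eqref{piRproduct}.) This step is routine bookkeeping across the four maps.

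For part (iii): this is where the extra hypothesis $g\in\mathsf g(H)$ enters, and I expect it to be the main obstacle. The assumption $\Delta\sqcap^R(g)=\sqcap^R(g)\ox\sqcap^R(g)$ means $\sqcap^R(g)$ is itself group-like (not merely idempotent as in (ii)), so part (i) applies to the element $\sqcap^R(g)$ as well. I would combine this with the composition identities \eqref{piLbarpiR=piLbar} relating $\sqcap^R,\overline\sqcap^R,\sqcap^L,\overline\sqcap^L$ and with \eqref{idpiLdeltah} (which gives $\Delta\sqcap^L(h)=\sqcap^L(h)1_1\ox 1_2$ and $\Delta\sqcap^R(h)=1_1\ox\sqcap^R(h)1_2$). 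The strategy: start from the counital property $\overline\sqcap^R(g_2)g_1=g$, i.e.\ $\overline\sqcap^R(g)g=g$; apply $\sqcap^R\sqcap^L$ appropriately, or better, use that $\overline\sqcap^R(g)$ lies in the right subalgebra $\sqcap^R(H)=\sqcap^L(H)'$ and that on that subalgebra the various projections are controlled by \eqref{piLbarpiR=piLbar}. Concretely, applying $\sqcap^L$ to the group-like relation for $\sqcap^R(g)$ and using $\sqcap^L\sqcap^R(g)=\overline\sqcap^L(g)$-type identities from \eqref{piRpiLcommute}, \eqref{piLbarpiR=piLbar}, together with the facts proven in (i)--(ii) for both $g$ and $\sqcap^R(g)$, should pin down $\sqcap^R\sqcap^L(g)=\overline\sqcap^R(g)$; the other equality $\sqcap^L\sqcap^R(g)=\overline\sqcap^L(g)$ follows by the left-right symmetry of the axioms (replacing $g$ by considerations with $\overline\sqcap^R(g)$, which is group-like by the fourth defining clause of $\mathsf g(H)$). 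The delicate point is matching up which hypothesis (group-likeness of $\sqcap^R(g)$ versus of $\overline\sqcap^R(g)$) feeds which identity, and keeping track of the Nakayama-type twist hidden in $\sqcap^L=\overline\sqcap^L\sqcap^R\sqcap^L$; a careful use of \eqref{piLbarpiR=piLbar} should resolve it without heavy computation.
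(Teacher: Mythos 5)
Your treatment of parts (i) and (ii) is correct and coincides with the paper's: (i) is immediate from the counital properties \eqref{counitalpropertiesofmaps} once $\Delta(g)=g\ox g$ collapses the Sweedler indices, and (ii) follows by applying the four projections to the identities of (i) via the module-map properties \eqref{R-moduleproperties}.

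Part (iii), however, is not proven: what you offer is a plan punctuated by ``should pin down'' and ``should resolve it'', with no actual chain of identities, and the specific tools you name cannot deliver the conclusion. The composition identities \eqref{piLbarpiR=piLbar} involve only the eight products of the form $\overline\sqcap^L\sqcap^R$, $\sqcap^R\overline\sqcap^L$, etc.; they say nothing about $\sqcap^R\sqcap^L$ or $\sqcap^L\sqcap^R$, which are (restrictions of) the Nakayama automorphism and its inverse and are \emph{not} controlled by those relations --- indeed the whole content of (iii) is that on elements of $\mathsf g(H)$ these Nakayama twists coincide with $\overline\sqcap^R$ and $\overline\sqcap^L$. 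Invoking ``$\sqcap^L\sqcap^R(g)=\overline\sqcap^L(g)$-type identities from \eqref{piRpiLcommute}, \eqref{piLbarpiR=piLbar}'' is therefore circular. The missing idea is a concrete computation exploiting the hypothesis $\Delta\overline\sqcap^R(g)=\overline\sqcap^R(g)\ox\overline\sqcap^R(g)$ together with the general fact (a consequence of \eqref{idpiLdeltah}) that $\Delta\overline\sqcap^R(g)=1_1\ox 1_2\overline\sqcap^R(g)$: applying $\mathsf{id}\ox\sqcap^R$ to this two-sided expression, multiplying by $g\ox 1$, using part (i) and then $\epsilon\ox\mathsf{id}$ yields $\overline\sqcap^R(g)=\sqcap^R\sqcap^L(g)\,\overline\sqcap^R(g)$; a second application with $\sqcap^L\ox\sqcap^R\sqcap^L$ in place of $\mathsf{id}\ox\sqcap^R$ yields $\sqcap^R\sqcap^L(g)=\sqcap^R\sqcap^L(g)\,\overline\sqcap^R(g)$; comparing the two gives $\sqcap^R\sqcap^L(g)=\overline\sqcap^R(g)$, and the other equality follows symmetrically. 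Without some such computation your argument for (iii) does not go through.
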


\begin{proof}
The equalities in (i) follow from $\Delta(g)=g\ox g$ and
\eqref{counitalpropertiesofmaps}. The statements in (ii) are obtained by
applying $\sqcap^R$, $\overline 
\sqcap^R$, $\sqcap^L$ and $\overline \sqcap^L$, respectively, to the equalities
in (i), and taking into account the module map properties
\eqref{R-moduleproperties}. For $g\in
\mathsf{g}(H)$, 
\begin{equation}\label{eq:deltapibarg}
\overline \sqcap^R(g) \ox \overline \sqcap^R(g)=
\Delta\overline \sqcap^R(g)=
1_1\ox 1_2 \overline \sqcap^R(g).
\end{equation}
Applying to both sides $\mathsf{id}\ox \sqcap^R$ and multiplying on the right
the result by $g\ox 1$, by the application of part (i) we get
$$
g\ox \overline \sqcap^R(g)=
1_1g\ox \sqcap^R(1_2)\overline \sqcap^R(g).
$$
Application of $\epsilon \ox \mathsf{id}$ to both sides of this equality yields
\begin{equation}\label{eq:pibarg1}
\overline \sqcap^R(g)=\sqcap^R\sqcap^L(g)\overline \sqcap^R(g).
\end{equation}
On the other hand, applying to both sides of \eqref{eq:deltapibarg}
$\sqcap^L\ox \sqcap^R\sqcap^L$ and multiplying on the right the result by
$g\ox 1$, we obtain
$$
g\ox \sqcap^R\sqcap^L(g) = 1_2 g \ox \sqcap^R\sqcap^L(g)1_1,
$$
where we used \eqref{piLbarpiR=piLbar}, part (i), \eqref{delta1},
\eqref{R-moduleproperties}, 
anti-multiplicativity of $\sqcap^R:\sqcap^L(H)\to \sqcap^R(H)$, and
\eqref{delta1}. Thus by applying $\epsilon \ox \mathsf{id}$, we
get 
\begin{equation}\label{eq:pibarg2}
\sqcap^R\sqcap^L(g) = \sqcap^R\sqcap^L(g)\overline \sqcap^R(g).
\end{equation}
Comparing \eqref{eq:pibarg1} and \eqref{eq:pibarg2}, we conclude on the first
equality in (iii). The other equality in (iii) is proven symmetrically.
\end{proof}

\begin{proposition}\label{prop:g(H)cocommutativeH}
For a cocommutative weak bialgebra $H$, the set of group-like elements and
the set $\mathsf g(H)$ 
are equal; that is, $\mathsf{g}(H)=\{g\in H:\Delta(g)=g\ox g,
\epsilon(g)=1\}$. 
\end{proposition}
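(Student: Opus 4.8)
The plan is as follows. The inclusion $\mathsf g(H)\subseteq\{g\in H:\Delta(g)=g\ox g,\ \epsilon(g)=1\}$ is immediate from Definition \ref{groupoidlike-set}. For the reverse inclusion, fix $g\in H$ with $\Delta(g)=g\ox g$ and $\epsilon(g)=1$; what must be shown is $\Delta\sqcap^R(g)=\sqcap^R(g)\ox\sqcap^R(g)$ and $\Delta\overline\sqcap^R(g)=\overline\sqcap^R(g)\ox\overline\sqcap^R(g)$. First I would record some consequences of cocommutativity. The flip-invariance of $\Delta(1)$, used in the formulae \eqref{sqcapR&L}, gives $\sqcap^R=\overline\sqcap^L$ and $\overline\sqcap^R=\sqcap^L$; hence, by \eqref{piLbarpiR=piLbar}, $\sqcap^R\sqcap^L=\overline\sqcap^L\sqcap^L=\sqcap^L$. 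Feeding this into the form $\Delta(1)=1_1\ox\sqcap^L(1_2)$ of \eqref{delta1} yields $(\mathsf{id}\ox\sqcap^R)\Delta(1)=\Delta(1)$, while $(\sqcap^R\ox\mathsf{id})\Delta(1)=\Delta(1)$ is read off directly from the form $\Delta(1)=\sqcap^R(1_1)\ox 1_2$ of \eqref{delta1}. Consequently $\Delta(1)=\sqcap^R(1_1)\ox\sqcap^R(1_2)$, and more generally, for all linear maps $\varphi,\varphi'$, one has $\sum\varphi(1_1)\ox\varphi'(1_2)=\sum\varphi(\sqcap^R(1_1))\ox\varphi'(1_2)=\sum\varphi(1_1)\ox\varphi'(\sqcap^R(1_2))$; I shall refer to this as the substitution rule.

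The heart of the matter is the $\sqcap^R$-identity. Using the last identity in \eqref{idpiLdeltah}, then the substitution rule, then \eqref{R-moduleproperties},
$$
\Delta\sqcap^R(g)=1_1\ox\sqcap^R(g)1_2=\sqcap^R(1_1)\ox\sqcap^R(g)\sqcap^R(1_2)=\sqcap^R(1_1)\ox\sqcap^R(g\sqcap^R(1_2)).
$$
On the other hand, applying the third identity in \eqref{idpiLdeltah} to the group-like $g$ gives $g\ox\sqcap^R(g)=g1_1\ox\sqcap^R(1_2)$, hence, applying $\sqcap^R\ox\mathsf{id}$, one gets $\sqcap^R(g)\ox\sqcap^R(g)=\sqcap^R(g1_1)\ox\sqcap^R(1_2)$. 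The right-hand side, being of the form $x\ox x$, is flip-invariant, so it equals $\sqcap^R(1_2)\ox\sqcap^R(g1_1)$; relabelling the two legs of $\Delta(1)$ (legitimate since $\Delta(1)$ is flip-invariant) turns this into $\sqcap^R(1_1)\ox\sqcap^R(g1_2)$, and one further use of the substitution rule rewrites it as $\sqcap^R(1_1)\ox\sqcap^R(g\sqcap^R(1_2))$ --- exactly the expression obtained above for $\Delta\sqcap^R(g)$. Therefore $\Delta\sqcap^R(g)=\sqcap^R(g)\ox\sqcap^R(g)$.

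The remaining identity $\Delta\overline\sqcap^R(g)=\overline\sqcap^R(g)\ox\overline\sqcap^R(g)$ then follows by applying the $\sqcap^R$-identity just established --- which holds in every cocommutative weak bialgebra --- to the opposite weak bialgebra $H^{op}$: it is again cocommutative, has the same comultiplication and counit (so $g$ is still a group-like element with $\epsilon(g)=1$), and $\sqcap^R_{H^{op}}=\overline\sqcap^R_H$ by \eqref{sqcapR&L}. Alternatively one may rerun the computation of the middle paragraph with `left' and `right' interchanged, using the $\sqcap^L$-versions of \eqref{idpiLdeltah}, \eqref{delta1} and \eqref{R-moduleproperties}, at the cost of one extra appeal to the fact that for cocommutative $H$ the left and right subalgebras coincide and are commutative.

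I expect the only genuine obstacle to be the bookkeeping in the middle paragraph: one must verify that the two expressions found for $\Delta\sqcap^R(g)$ and for $\sqcap^R(g)\ox\sqcap^R(g)$ really coincide, which forces cocommutativity to be used twice --- once to pass between $1_1\ox 1_2$ and $\sqcap^R(1_1)\ox\sqcap^R(1_2)$, and once to transport the group-like element from one tensor factor to the other. Everything else is a direct appeal to the identities collected in Section \ref{sec:prelims}.
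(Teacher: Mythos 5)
Your proof is correct. It follows the same overall strategy as the paper --- the forward inclusion is definitional, and the reverse inclusion is a direct element computation starting from $\Delta\sqcap^R(g)=1_1\ox\sqcap^R(g)1_2$ via \eqref{idpiLdeltah} --- but the middle of the computation is organised differently. The paper inserts a second copy of $\sqcap^R(g)$ using its idempotency (part (ii) of Lemma \ref{lem:gr-like}) and then transports one copy across the tensor sign using the fact that, under cocommutativity, $\Delta(1)$ is a separability element for the commutative algebra $\sqcap^R(H)=\sqcap^L(H)$; this yields $\sqcap^R(g)1_1\ox\sqcap^R(g)1_2=\sqcap^R(g1_1)\ox\sqcap^R(g1_2)=\sqcap^R(g)\ox\sqcap^R(g)$ in a single chain. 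You avoid the idempotency lemma altogether and instead reduce both $\Delta\sqcap^R(g)$ and $\sqcap^R(g)\ox\sqcap^R(g)$ to the common normal form $\sqcap^R(1_1)\ox\sqcap^R(g\sqcap^R(1_2))$, using your substitution rule (which encodes $\Delta(1)\in\sqcap^R(H)\ox\sqcap^R(H)$, valid since $\sqcap^R=\overline\sqcap^L$ forces $\sqcap^R\sqcap^L=\sqcap^L$) together with the flip-invariance of $x\ox x$ and of $\Delta(1)$. Both arguments invoke cocommutativity in the same two essential places, and your treatment of $\overline\sqcap^R$ via the opposite weak bialgebra $H^{op}$ (where $\sqcap^R_{H^{op}}=\overline\sqcap^R_H$) is a clean formalisation of the paper's ``follows symmetrically''. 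The trade-off is purely stylistic: the paper's route is one five-step chain, whereas yours requires verifying that two separately computed expressions coincide, at the cost of two appeals to flip-invariance; there is no gap in either.
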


\begin{proof}
It follows immediately from the cocommutativity of $H$ that
$\sqcap^L=\overline{\sqcap}^R$ and $\sqcap^R=\overline{\sqcap}^L$, so that
$\sqcap^R(H)$ and $\sqcap^L(H)$ are coinciding commutative 
separable Frobenius subalgebras in $H$, with separability element $1_1\ox
1_2$. 
Hence if $\Delta(g)=g\ox g$, then 
\begin{equation*}
\begin{array}{rll}
\Delta\sqcap^R(g)&=& 1_1\ox \sqcap^R(g)1_2
=
1_1\ox \sqcap^R(g)\sqcap^R(g)1_2\\
&=&\sqcap^R(g)1_1 \ox \sqcap^R(g)1_2
=
\sqcap^R(g1_1)\ox \sqcap^R(g1_2)
=
\sqcap^R(g)\ox \sqcap^R(g).
\end{array}
\end{equation*}
In the first equality we used \eqref{idpiLdeltah} and in the second one we
used part (ii) of Lemma \ref{lem:gr-like}. In the third equality
we used that $\Delta(1)$ is a separability element for the commutative algebra
$\sqcap^R(H)$. In the fourth equality we used $\Delta(1)\in \sqcap^R(H)\ox
\sqcap^R(H)$ and \eqref{R-moduleproperties}. 
In the last equality we used the multiplicativity of the
comultiplication (cf. \eqref{multiplicativitycomultiplication}) and that
$\Delta(g)=g\ox g$. 
The identity $\Delta\overline\sqcap^R(g)=\overline\sqcap^R(g)\ox
\overline\sqcap^R(g)$ follows symmetrically.
\end{proof}

\begin{lemma}\label{lem:gr-Hopf}
Let $H$ be a weak Hopf algebra 
and $g\in H$ such that $\Delta(g)=g\ox g$. Then the following assertions
hold. 
\begin{itemize}
\item[{(i)}] $\Delta\sqcap^L(g)=\sqcap^L(g) \ox \sqcap^L(g)$ and
$\Delta\sqcap^R(g)=\sqcap^R(g) \ox \sqcap^R(g)$.
\item[{(ii)}] $S^2\sqcap^R(g)=\sqcap^R(g)$ and $S^2\sqcap^L(g)=\sqcap^L(g)$.
\item[{(iii)}] $\overline \sqcap^L(g)=\sqcap^R(g)$ and $\sqcap^L(g)=\overline
 \sqcap^R(g)$. 
\item[{(iv)}] $S^2(g)=g$.
\item[{(v)}] $\sqcap^R S(g)=\overline \sqcap^R(g)$ and $\overline \sqcap^R
 S(g)=\sqcap^R(g)$; $\sqcap^L S(g)=\overline \sqcap^L(g)$ and $\overline
 \sqcap^L S(g)=\sqcap^L(g)$. 
\item[{(vi)}] $\Delta\overline\sqcap^L(g)=\overline\sqcap^L(g)\ox
 \overline\sqcap^L(g)$ and
 $\Delta\overline\sqcap^R(g)=\overline\sqcap^R(g)\ox \overline\sqcap^R(g)$.
\end{itemize}
\end{lemma}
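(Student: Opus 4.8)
The plan is to derive all six items from one observation: since $S$ is anti-comultiplicative, \eqref{eq:S_a_multiplicative} gives $\Delta S(g)=S(g_2)\ox S(g_1)=S(g)\ox S(g)$, so $S(g)$ — and hence $S^n(g)$ for every $n$ — again satisfies the hypothesis of Lemma~\ref{lem:gr-like}, which we may therefore apply freely to both $g$ and $S(g)$. Using the antipode axioms \eqref{antipodeaxioms} with $\Delta(g)=g\ox g$ one gets the closed forms $\sqcap^L(g)=g_1S(g_2)=gS(g)$ and $\sqcap^R(g)=S(g_1)g_2=S(g)g$. Part (i) is then immediate from \eqref{multiplicativitycomultiplication}: $\Delta\sqcap^L(g)=\Delta(g)\Delta(S(g))=(g\ox g)(S(g)\ox S(g))=gS(g)\ox gS(g)=\sqcap^L(g)\ox\sqcap^L(g)$, and symmetrically $\Delta\sqcap^R(g)=\sqcap^R(g)\ox\sqcap^R(g)$.

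The heart of the proof — and the step I expect to require the most care — is the claim that $\sqcap^R(g)$ and $\sqcap^L(g)$ both lie in $\sqcap^R(H)\cap\sqcap^L(H)$ and are fixed by $S$. Combining part (i) with the two relevant identities of \eqref{idpiLdeltah} yields $1_1\ox\sqcap^R(g)1_2=\sqcap^R(g)\ox\sqcap^R(g)$ and $\sqcap^L(g)1_1\ox 1_2=\sqcap^L(g)\ox\sqcap^L(g)$. I would apply the linear map $a\ox b\mapsto S(a)b$ to each. On the right-hand sides this gives $S(\sqcap^R(g))\sqcap^R(g)$ resp.\ $S(\sqcap^L(g))\sqcap^L(g)$; on the left-hand sides, rewriting the legs of $\Delta(1)$ through \eqref{delta1} so that they lie in $\sqcap^L(H)$ resp.\ $\sqcap^R(H)$, sliding $\sqcap^R(g)$ resp.\ $\sqcap^L(g)$ past them by the commutation relation \eqref{piRpiLcommute}, and using the instance $S(1_1)1_2=\sqcap^R(1)=1$ of \eqref{antipodeaxioms}, one is left with $\sqcap^R(g)$ resp.\ $S(\sqcap^L(g))$. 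Hence $\sqcap^R(g)=S(\sqcap^R(g))\sqcap^R(g)$ and $S(\sqcap^L(g))=S(\sqcap^L(g))\sqcap^L(g)$; applying \eqref{piRpiLcommute} once more (by \eqref{piR&antipode} the factors $S(\sqcap^R(g))$, $S(\sqcap^L(g))$ lie in $\sqcap^L(H)$ resp.\ $\sqcap^R(H)$) together with the axiom $\gamma_1S(\gamma_2)=\sqcap^L(\gamma)$ for the group-likes $\gamma=\sqcap^R(g),\sqcap^L(g)$, these collapse to $\sqcap^R(g)=\sqcap^L\sqcap^R(g)$ and $S(\sqcap^L(g))=\sqcap^L(g)$, i.e.\ $\sqcap^R\sqcap^L(g)=\sqcap^L(g)$ by \eqref{piR&antipode}. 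Thus $\sqcap^R(g)\in\sqcap^L(H)$ and $\sqcap^L(g)\in\sqcap^R(H)$; and since \eqref{piR&antipode} identifies $S$ on $\sqcap^R(H)$ with $\sqcap^L$ and $S$ on $\sqcap^L(H)$ with $\sqcap^R$, $S$ fixes both $\sqcap^R(g)$ and $\sqcap^L(g)$, which are injective points of $S$ within these two finite-dimensional subalgebras.

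From here the remaining items are bookkeeping. For (iv): by Lemma~\ref{lem:gr-like}(i) and the previous step, both $g$ and $S^2(g)$ lie in the corner $\sqcap^L(g)H\sqcap^R(g)$ — note $\sqcap^L(S^2(g))=\sqcap^L\sqcap^R\sqcap^L(g)=\sqcap^L(g)$ and $\sqcap^R(S^2(g))=\sqcap^R\sqcap^L\sqcap^R(g)=\sqcap^R(g)$ using \eqref{piR&antipode} and the previous step — so $x:=S^2(g)-g$ obeys $x=x\sqcap^R(g)$. On the other hand $S(g)S^2(g)=\sqcap^L(S(g))=\sqcap^L\sqcap^R(g)=\sqcap^R(g)=S(g)g$, so $S(g)x=0$ and hence $x=x\sqcap^R(g)=xS(g)g=0$. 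Part (ii) is then a one-line consequence of (iv): $S^2\sqcap^R(g)=S^2(S(g)g)=S^3(g)S^2(g)=S(g)g=\sqcap^R(g)$, and likewise $S^2\sqcap^L(g)=\sqcap^L(g)$. For (iii): $S$ fixes $\sqcap^R(g)$ and $S\overline\sqcap^L=\sqcap^R$ by \eqref{piR&antipode}, so $S(\overline\sqcap^L(g))=\sqcap^R(g)=S(\sqcap^R(g))$ with both arguments in $\sqcap^L(H)$, where $S$ is injective, giving $\overline\sqcap^L(g)=\sqcap^R(g)$; symmetrically $\overline\sqcap^R(g)=\sqcap^L(g)$. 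Part (v) follows by combining (iii), (iv) and \eqref{piR&antipode} (e.g.\ $\sqcap^RS(g)=\sqcap^R\sqcap^L(g)=\sqcap^L(g)=\overline\sqcap^R(g)$), and (vi) follows from (i) and (iii): $\Delta\overline\sqcap^R(g)=\Delta\sqcap^L(g)=\sqcap^L(g)\ox\sqcap^L(g)=\overline\sqcap^R(g)\ox\overline\sqcap^R(g)$, and similarly $\Delta\overline\sqcap^L(g)=\overline\sqcap^L(g)\ox\overline\sqcap^L(g)$. (In particular parts (i) and (vi) show $g\in\mathsf g(H)$ whenever $g\ne 0$, the point of the lemma.)
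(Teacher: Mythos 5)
Your argument is correct in substance and pivots on the same key identity as the paper's proof, namely $\sqcap^L\sqcap^R(g)=\sqcap^R(g)$ (the paper's \eqref{eq:gr-like-lemma}) together with its mirror $\sqcap^R\sqcap^L(g)=\sqcap^L(g)$; but you reach it differently (by applying $a\ox b\mapsto S(a)b$ to the two expressions for $\Delta\sqcap^R(g)$ and $\Delta\sqcap^L(g)$ coming from (i) and \eqref{idpiLdeltah}, where the paper simply evaluates the antipode axioms \eqref{antipodeaxioms} on the group-likes $\sqcap^R(g)$ and compares via \eqref{piRpiLcommute}), and you reorder the consequences: (iv) before (ii), part (iii) via injectivity of $S$ on the base algebras rather than the paper's one-line chain $\sqcap^R(g)=\sqcap^L\sqcap^R(g)=\overline\sqcap^L\sqcap^L\sqcap^R(g)=\overline\sqcap^L(g)$ from \eqref{piLbarpiR=piLbar}, and (iv) by annihilating the difference $x=S^2(g)-g$ rather than the paper's telescoping $g=gS(g)S^2(g)S(g)g=S^2\bigl(\sqcap^L(g)\,g\,\sqcap^R(g)\bigr)=S^2(g)$. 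All of these variants are legitimate and cost about the same effort.

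The one step that does not follow as written is the last line of your proof of (iv). From $S(g)S^2(g)=S(g)g$ you correctly obtain $S(g)x=0$, but the chain $x=x\sqcap^R(g)=xS(g)g=0$ needs $xS(g)=0$, i.e.\ annihilation on the \emph{other} side, which you have not established at that point. The repair is immediate: either note symmetrically that $S^2(g)S(g)=\sqcap^R(S(g))=\sqcap^R\sqcap^L(g)=\sqcap^L(g)=gS(g)$, whence $xS(g)=0$ and your chain closes; or replace it by the left-handed corner identity $x=\sqcap^L(g)x=gS(g)x=g\,(S(g)x)=0$, which uses exactly the fact $S(g)x=0$ that you did prove. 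With that one-line fix the proof is complete.
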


\begin{proof}
(i). Since $\Delta(g)=g\ox g$, it follows by the multiplicativity of 
$\Delta$ in \eqref{multiplicativitycomultiplication} and the
anti-co\-multiplicativity of $S$ in \eqref{eq:S_a_multiplicative} that 
$$
\Delta\sqcap^L(g)=
\Delta(g_1S(g_2))=
\Delta(gS(g))=
g_1S(g_{2'})\ox g_2S(g_{1'})=
gS(g)\ox gS(g)=
\sqcap^L(g)\ox \sqcap^L(g),
$$
and symmetrically for $\sqcap^R(g)$.

(ii). By the weak Hopf algebra axioms \eqref{antipodeaxioms} and part (i), 
$$
\sqcap^L\sqcap^R(g)=
\sqcap^R(g)_1S(\sqcap^R(g)_2)=
\sqcap^R(g)S\sqcap^R(g)
\stackrel{\eqref{piR&antipode}}=
\sqcap^R(g)\sqcap^L\sqcap^R(g).
$$
Symmetrically, 
$$
\sqcap^R(g)=
\sqcap^R\sqcap^R(g)=
S(\sqcap^R(g)_1)\sqcap^R(g)_2=
S\sqcap^R(g)\sqcap^R(g)
\stackrel{\eqref{piR&antipode}}=
\sqcap^L\sqcap^R(g)\sqcap^R(g).
$$
The right hand sides are equal by \eqref{piRpiLcommute}, proving
\begin{equation}\label{eq:gr-like-lemma}
\sqcap^L\sqcap^R(g)=\sqcap^R(g). 
\end{equation}
Applying $\sqcap^R$ to both sides of \eqref{eq:gr-like-lemma} and using
\eqref{piR&antipode}, we conclude on $S^2\sqcap^R(g)=\sqcap^R(g)$. The other
equality is proven symmetrically. 

(iii). By \eqref{eq:gr-like-lemma} and some weak Hopf algebra identities
in Section \ref{sec:prelims},
$$
\sqcap^R(g)\stackrel{\eqref{eq:gr-like-lemma}}=
\sqcap^L\sqcap^R(g)\stackrel{\eqref{piLbarpiR=piLbar}}=
\overline\sqcap^L\sqcap^L\sqcap^R(g)\stackrel{\eqref{eq:gr-like-lemma}}=
\overline\sqcap^L\sqcap^R(g)\stackrel{\eqref{piLbarpiR=piLbar}}=
\overline\sqcap^L(g).
$$
The other equality is proven symmetrically.

(iv). If $\Delta(g)=g\ox g$, then
\begin{equation}\label{eq:Sg}
gS(g)g=
g_1S(g_2)g_3=
g_1\sqcap^R(g_2)=
g.
\end{equation}
Hence 
\begin{eqnarray*}
g&=&gS(g)g=gS(gS(g)g)g=gS(g)S^2(g)S(g)g=
g_1S(g_2)S^2(g)S(g_{1'})g_{2'}\\
&=&\sqcap^L(g)S^2(g)\sqcap^R(g)=
S^2\sqcap^L(g)S^2(g)S^2\sqcap^R(g)=
S^2(\sqcap^L(g)g\sqcap^R(g))=
S^2(g).
\end{eqnarray*}
In the first and the second equalities we used \eqref{eq:Sg}. In the third and
the penultimate equalities we used anti-multiplicativity of $S$,
cf. \eqref{eq:S_a_multiplicative}. In the fourth equality we used
$\Delta(g)=g\ox g$, in the fifth equality we used the weak Hopf algebra axioms
\eqref{antipodeaxioms} and in the sixth equality we used part (ii). 
The last equality follows by part (i) of Lemma \ref{lem:gr-like}. 

(v). The first claim follows by 
$
\overline\sqcap^R(g)=
\overline\sqcap^R S^2(g)=
\sqcap^R S(g)
$,
cf. part (iv) and \eqref{piR&antipode}. The second claim is immediate by
\eqref{piR&antipode}. The remaining two claims follow symmetrically.

(vi). This is immediate by parts (i) and (iii). 
\end{proof}

From parts (i) and (vi) of Lemma \ref{lem:gr-Hopf} we obtain the following.

\begin{corollary}
In any weak Hopf algebra $H$, $\mathsf{g}(H)=\{g\in H\ :\ \Delta(g)=g\ox g,
\epsilon(g)=1\}$.
\end{corollary}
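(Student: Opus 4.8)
The plan is to notice that $\mathsf g(H)$ is, by Definition \ref{groupoidlike-set}, obtained from the set of group-like elements by imposing the two extra requirements $\Delta\sqcap^R(g)=\sqcap^R(g)\ox\sqcap^R(g)$ and $\Delta\overline\sqcap^R(g)=\overline\sqcap^R(g)\ox\overline\sqcap^R(g)$. Hence the inclusion $\mathsf g(H)\subseteq\{g\in H\ :\ \Delta(g)=g\ox g,\ \epsilon(g)=1\}$ is immediate, and the whole content of the statement is the reverse inclusion. So I would start from an arbitrary $g\in H$ with $\Delta(g)=g\ox g$ and $\epsilon(g)=1$, and show that in a weak Hopf algebra the two extra conditions are automatic.

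For this, first observe that the requirement $\Delta\sqcap^R(g)=\sqcap^R(g)\ox\sqcap^R(g)$ is precisely the second displayed identity in part (i) of Lemma \ref{lem:gr-Hopf}, which is established there from the multiplicativity of $\Delta$ (axiom \eqref{multiplicativitycomultiplication}) together with the anti-comultiplicativity of the antipode (see \eqref{eq:S_a_multiplicative}), using only $\Delta(g)=g\ox g$. Second, the requirement $\Delta\overline\sqcap^R(g)=\overline\sqcap^R(g)\ox\overline\sqcap^R(g)$ is exactly the second identity in part (vi) of Lemma \ref{lem:gr-Hopf}, which follows there from parts (i) and (iii), the latter resting on $S^2(g)=g$ (part (iv)). Since none of these Lemma parts makes use of the normalization $\epsilon(g)=1$, both extra conditions hold for every group-like $g$, so $g\in\mathsf g(H)$ and the two sets coincide.

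In short, the argument is a bookkeeping step: the only thing to verify is that the conditions listed in Definition \ref{groupoidlike-set} match verbatim the conclusions of parts (i) and (vi) of Lemma \ref{lem:gr-Hopf}, which they do. There is no genuine obstacle at this stage; all the substance has already been carried out in Lemma \ref{lem:gr-Hopf}, in particular in its part (iv), where $S^2(g)=g$ is deduced for any element $g$ with $\Delta(g)=g\ox g$.
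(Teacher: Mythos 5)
Your argument is correct and is exactly the paper's proof: the corollary is deduced by observing that the two extra conditions in Definition \ref{groupoidlike-set} are precisely the second identities in parts (i) and (vi) of Lemma \ref{lem:gr-Hopf}, which hold for any $g$ with $\Delta(g)=g\ox g$ in a weak Hopf algebra. Your additional remark that the normalization $\epsilon(g)=1$ plays no role in the lemma is accurate and just makes explicit what the paper leaves implicit.
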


Our motivation of the study of the set $\mathsf g(H)$ in a weak
bialgebra $H$ comes from the following. 

\begin{proposition}\label{prop:gr-like}
For any weak bialgebra $H$ over a field $k$, there is a bijection
between the sets $\mathsf{wba}(k\mathbbm{2},H)$ and $\mathsf g (H)$.
\end{proposition}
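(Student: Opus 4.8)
The plan is to identify $k\mathbbm 2=\mathsf k(\mathbbm 2)$ explicitly and to realize the bijection as $Q\mapsto Q(a)$, where $a\colon S\to T$ is the unique non-identity morphism of the interval category. I would first record the weak bialgebra $k\mathbbm 2$ concretely: it has $k$-basis the three morphisms $S,T,a$ (all group-like), multiplication $SS=S$, $TT=T$, $aS=a=Ta$ and all other products of basis elements zero, unit $1=S+T$, and $\sqcap^R(S)=S=\overline{\sqcap}^R(S)$, $\sqcap^R(T)=T=\overline{\sqcap}^R(T)$, $\sqcap^R(a)=S$, $\overline{\sqcap}^R(a)=T$. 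The crucial structural remark is that $R:=\sqcap^R(k\mathbbm 2)=k\{S,T\}$ is commutative, so its Nakayama automorphism is the identity and hence $\sqcap^R\sqcap^L=\overline{\sqcap}^R$ on all of $k\mathbbm 2$; this is what will let me handle the third diagram of Theorem \ref{thm:wba_mor}.

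For the forward direction: a morphism $Q\in\mathsf{wba}(k\mathbbm 2,H)$ is in particular a coalgebra map, hence sends the group-like basis $\{S,T,a\}$ to group-like elements of $H$. Put $g:=Q(a)$. Evaluating the first two diagrams of Theorem \ref{thm:wba_mor} at $a$ gives $Q(S)=\sqcap^R Q(a)=\sqcap^R(g)$ and $Q(T)=\overline{\sqcap}^R Q(a)=\overline{\sqcap}^R(g)$; since $Q(S),Q(T)$ are group-like this forces $\Delta\sqcap^R(g)=\sqcap^R(g)\ox\sqcap^R(g)$ and $\Delta\overline{\sqcap}^R(g)=\overline{\sqcap}^R(g)\ox\overline{\sqcap}^R(g)$, so together with $\Delta(g)=g\ox g$, $\epsilon(g)=1$ we get $g\in\mathsf g(H)$: the map $Q\mapsto Q(a)$ is well defined. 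The same computation shows $Q$ is determined on the basis by $Q(a)$ (through $Q(S)=\sqcap^R Q(a)$, $Q(T)=\overline{\sqcap}^R Q(a)$), which gives injectivity and also that the assignment $g\mapsto Q_g$ below is a two-sided inverse.

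For surjectivity, given $g\in\mathsf g(H)$ I would define $Q_g\colon k\mathbbm 2\to H$ linearly by $Q_g(S)=\sqcap^R(g)$, $Q_g(T)=\overline{\sqcap}^R(g)$, $Q_g(a)=g$, and verify the three requirements for membership in $\mathsf{wba}(k\mathbbm 2,H)$. That $Q_g$ is a coalgebra map is immediate from the defining conditions of $\mathsf g(H)$ (which say exactly that $\sqcap^R(g),\overline{\sqcap}^R(g),g$ are group-like, using $\epsilon\sqcap^R=\epsilon\overline{\sqcap}^R=\epsilon$ for counitality), since then $\Delta_H Q_g$ and $(Q_g\ox Q_g)\Delta$ agree on the basis, and so do $\epsilon_H Q_g$ and $\epsilon$. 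The first two diagrams of Theorem \ref{thm:wba_mor} reduce on basis elements to idempotency of $\sqcap^R,\overline{\sqcap}^R$ on $R':=\sqcap^R(H)$ together with \eqref{piLbarpiR=piLbar} ($\sqcap^R\overline{\sqcap}^R=\overline{\sqcap}^R$, $\overline{\sqcap}^R\sqcap^R=\sqcap^R$). The fourth diagram, with $E(x\ox y)=x1_1\ox\sqcap^R(1_2)y=xS\ox Sy+xT\ox Ty$ in $k\mathbbm 2$, is checked on the nine basis pairs: all pairs with $\mu(x\ox y)=0$ also have $E(x\ox y)=0$, and the four surviving pairs $(S,S),(T,T),(T,a),(a,S)$ reduce, via the values of $Q_g$, to $\sqcap^R(g)^2=\sqcap^R(g)$, $\overline{\sqcap}^R(g)^2=\overline{\sqcap}^R(g)$, $\overline{\sqcap}^R(g)g=g$ and $g\sqcap^R(g)=g$, which are Lemma \ref{lem:gr-like}(i),(ii).

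The step I expect to be the real obstacle is the third diagram of Theorem \ref{thm:wba_mor} — the Nakayama compatibility of the mate $q=Q_g|_R$ — which, since $q$ lands in $R'$ and $R$ has trivial Nakayama, amounts to showing that the Nakayama automorphism $\theta'$ of $R'=\sqcap^R(H)$ fixes both $\sqcap^R(g)$ and $\overline{\sqcap}^R(g)$, something not visible from $g$ being merely group-like. I would dispatch it by using $\sqcap^R\sqcap^L=\overline{\sqcap}^R$ on $k\mathbbm 2$ to rewrite the right-hand path of that diagram as $Q_g\overline{\sqcap}^R$, which by the second diagram equals $\overline{\sqcap}^R Q_g$, so the required identity becomes $\sqcap^R\sqcap^L Q_g(v)=\overline{\sqcap}^R Q_g(v)$ for $v\in\{S,T,a\}$. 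On $a$ this is $\sqcap^R\sqcap^L(g)=\overline{\sqcap}^R(g)$, i.e.\ Lemma \ref{lem:gr-like}(iii); on $S$ it is $\sqcap^R\sqcap^L\sqcap^R(g)=\sqcap^R\overline{\sqcap}^L(g)=\sqcap^R(g)$, combining Lemma \ref{lem:gr-like}(iii) ($\sqcap^L\sqcap^R(g)=\overline{\sqcap}^L(g)$) with \eqref{piLbarpiR=piLbar} ($\sqcap^R\overline{\sqcap}^L=\sqcap^R$); on $T$ it is $\sqcap^R\sqcap^L\overline{\sqcap}^R(g)=\sqcap^R\sqcap^L(g)=\overline{\sqcap}^R(g)$, using \eqref{piLbarpiR=piLbar} ($\sqcap^L\overline{\sqcap}^R=\sqcap^L$) and Lemma \ref{lem:gr-like}(iii) once more. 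With this in hand all four diagrams hold, $Q_g\in\mathsf{wba}(k\mathbbm 2,H)$, and $Q\mapsto Q(a)$, $g\mapsto Q_g$ are mutually inverse, establishing the asserted bijection.
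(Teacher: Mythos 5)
Your proposal is correct and follows essentially the same route as the paper: the bijection $Q\mapsto Q(a)$, with inverse sending $g$ to the linear map determined by $S\mapsto\sqcap^R(g)$, $T\mapsto\overline{\sqcap}^R(g)$, $a\mapsto g$, and with the Nakayama/third-diagram condition discharged exactly as in the paper via Lemma \ref{lem:gr-like}(iii) and \eqref{piLbarpiR=piLbar}, and the weak multiplicativity condition via Lemma \ref{lem:gr-like}(i),(ii). The only cosmetic quibble is that $\sqcap^R\sqcap^L=\overline{\sqcap}^R$ on $k\mathbbm 2$ follows from $\sqcap^L=\overline{\sqcap}^R$ there together with $\sqcap^R\overline{\sqcap}^R=\overline{\sqcap}^R$, rather than from commutativity of the base per se, but this does not affect the argument.
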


\begin{proof}
 Let $\gamma\in \mathsf{wba}(k\mathbbm{2},H)$ and consider $g:=\gamma(a)$
 (where $a$ stands for the only non-identity morphism in $\mathbbm{2}$). Let 
 us see that $g\in\mathsf{g}(H)$: 
\begin{eqnarray*}
\Delta(g)&=&\Delta\gamma(a)=(\gamma\ox \gamma)\Delta_{k\mathbbm{2}}(a)=
\gamma(a)\ox\gamma(a)=g\ox g,\\
\epsilon(g)&=&\epsilon\gamma(a)=\epsilon_{k\mathbbm{2}} (a)=1,\\
\Delta\sqcap^R(g)&=
&\Delta\sqcap^R\gamma(a)=
(\gamma\ox \gamma)\Delta_{k\mathbbm{2}}\sqcap^R_{k\mathbbm{2}}(a)=
\gamma\sqcap^R_{k\mathbbm{2}}(a)\ox 
\gamma\sqcap^R_{k\mathbbm{2}}(a)\\
&=&
\sqcap^R\gamma(a)\ox \sqcap^R\gamma(a)=
\sqcap^R(g)\ox \sqcap^R(g),\\
\Delta\overline{\sqcap}^R(g)&=
&\Delta\overline{\sqcap}^R\gamma(a)=
(\gamma\ox \gamma)\Delta_{k\mathbbm{2}}
\overline{\sqcap}^R_{k\mathbbm{2}}(a)=\gamma\overline{\sqcap}^R_{k\mathbbm{2}}(a)
\ox \gamma\overline{\sqcap}^R_{k\mathbbm{2}}(a)\\
&=&
\overline{\sqcap}^R\gamma(a)\ox \overline{\sqcap}^R\gamma(a)=
\overline{\sqcap}^R(g)\ox \overline{\sqcap}^R(g).
\end{eqnarray*}
Conversely, let $g\in \mathsf{g}(H)$ and consider the linear map $\gamma:
k\mathbbm{2}\rightarrow H$, given by
$$ 
\gamma({{S}})=\sqcap^R(g),\qquad
\gamma({{T}})=\overline{\sqcap}^R(g),\qquad 
\gamma(a)=g,
$$ 
(where $S$ and $T$ are the objects of the category $\mathbbm{2}$ and the
same symbols stand for their unit morphisms). 
By Theorem \ref{thm:wba_mor}, to check that $\gamma$ is a morphism in
$\mathsf{wba}(k\mathbbm{2}, H)$ it should be proven first that $\gamma$ is
a coalgebra map. This follows by noting that --- since $\epsilon
\sqcap^R=\epsilon$ and $\epsilon \overline \sqcap^R=\epsilon$ --- for any morphism
$c$ in $\mathbbm{2}$, 
$$
\Delta\gamma(c)=
\gamma(c)\ox \gamma(c)=
(\gamma\ox \gamma)\Delta_{k\mathbbm{2}}(c)
\qquad \textrm{and}\qquad 
\epsilon\gamma(c)=
\epsilon(g)=
1=\epsilon_{k\mathbbm{2}}(c).
$$
Next, $\gamma$ can be seen to commute with $\sqcap^R$ as
\begin{eqnarray*}
\sqcap^R\gamma(S)&=&
\sqcap^R\sqcap^R(g)=
\sqcap^R(g)=\gamma(S)=
\gamma\sqcap^R_{k\mathbbm{2}}(S)\\
\sqcap^R\gamma(T)&=&
\sqcap^R\overline{\sqcap}^R(g)=
\overline{\sqcap}^R(g)=
\gamma(T)=
\gamma\sqcap^R_{k\mathbbm{2}}(T)\\
\sqcap^R\gamma(a)&=&
\sqcap^R(g)=\gamma(S)=
\gamma{s(a)}=
\gamma\sqcap^R_{k\mathbbm{2}}(a).
\end{eqnarray*}
Commutativity with $\overline \sqcap^R$ is checked symmetrically. 
Commutativity with the Nakayama automorphism $\sqcap^R\sqcap^L$ follows by
part (iii) of Lemma \ref{lem:gr-like} as 
\begin{eqnarray*}
\sqcap^R\sqcap^L\gamma(S)&=&
\sqcap^R\sqcap^L\sqcap^R(g)=
\sqcap^R\overline \sqcap^L(g)
\stackrel{\eqref{piLbarpiR=piLbar}}=
\sqcap^R(g)=
\gamma(S)=
\gamma \sqcap^R_{k\mathbbm{2}} \sqcap^L_{k\mathbbm{2}}(S)\\
\sqcap^R\sqcap^L\gamma(T)&=&
\sqcap^R\sqcap^L\overline \sqcap^R(g)
\stackrel{\eqref{piLbarpiR=piLbar}}=
\sqcap^R\sqcap^L(g)=
\overline \sqcap^R(g)=
\gamma(T)=
\gamma \sqcap^R_{k\mathbbm{2}} \sqcap^L_{ k\mathbbm{2}}(T)\\
\sqcap^R\sqcap^L\gamma(a)&=&
\sqcap^R\sqcap^L(g)=
\overline \sqcap^R(g)=
\gamma(T)=
\gamma \sqcap^R_{k\mathbbm{2}} \sqcap^L_{ k\mathbbm{2}}(g).
\end{eqnarray*}
Finally, the weak multiplicativity condition in Theorem
\ref{thm:wba_mor} translates to four equalities in parts (i) and
(ii) of Lemma \ref{lem:gr-like}, see 
$$
\begin{array}{ll}
\gamma(S)\gamma(S)=\sqcap^R(g)\sqcap^R(g)
=\sqcap^R(g)=\gamma(S)&\quad
\gamma(a)\gamma(S)=g\sqcap^R(g)
=g=\gamma(a)\\
\gamma(T)\gamma(T)=\overline \sqcap^R(g)\,\overline\sqcap^R(g)\,
=\overline\sqcap^R(g)=\gamma(T)&\quad
\gamma(T)\gamma(a)=\overline \sqcap^R(g)\,g\,
=g=\gamma(a).
\end{array}
$$
These constructions clearly yield mutually inverse maps between the
sets $\mathsf g(H)$ and $\mathsf{wba}(k\mathbbm{2},H)$.
\end{proof}

\begin{proposition}\label{prop:g_obj}
For any weak bialgebra $H$, there is a category with morphism set $\mathsf g
(H)$ in Definition \ref{groupoidlike-set}. The object set is $\{r\in 
\sqcap^R(H)=\overline \sqcap^R (H)\ :\ \Delta (r) =r\ox
r,\ \epsilon (r) =1\}$ and the identity morphisms are given by the evident
inclusion into $\mathsf g(H)$. The source map is given by the restriction of
$\sqcap^R$ and the target map is given by the restriction of $\overline
\sqcap^R$. The composition is given by the restriction of the multiplication
in $H$. 
\end{proposition}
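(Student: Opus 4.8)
The plan is to verify directly that the stated data form a (small) category, where composition is the \emph{partial} operation sending a pair $(g,g')$ of morphisms with $\sqcap^R(g)=\overline\sqcap^R(g')$ to the product $gg'$ taken in $H$. Write $\mathcal O$ for the proposed object set; it is well defined since $\sqcap^R(H)=\overline\sqcap^R(H)$, recalled in Section \ref{sec:prelims}. I would first record two elementary facts. Because $\sqcap^R$ and $\overline\sqcap^R$ are idempotents with common image $\sqcap^R(H)$, each restricts to the identity on $\sqcap^R(H)$; and because they corestrict to (anti\nobreakdash-)coalgebra maps $H\to\sqcap^R(H)$, they preserve the counit, i.e. $\epsilon\sqcap^R=\epsilon=\epsilon\overline\sqcap^R$. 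From these it follows at once that $\mathcal O\subseteq\mathsf g(H)$ (for $r\in\mathcal O$ we have $\sqcap^R(r)=\overline\sqcap^R(r)=r$, so the last two conditions of Definition \ref{groupoidlike-set} reduce to $\Delta(r)=r\ox r$), that $\sqcap^R$ and $\overline\sqcap^R$ send $\mathsf g(H)$ into $\mathcal O$ (for $g\in\mathsf g(H)$: $\sqcap^R(g)\in\sqcap^R(H)$, $\Delta\sqcap^R(g)=\sqcap^R(g)\ox\sqcap^R(g)$ is part of the definition of $\mathsf g(H)$, and $\epsilon\sqcap^R(g)=\epsilon(g)=1$; likewise for $\overline\sqcap^R$), and that the identity morphism $r\in\mathcal O$ has source $\sqcap^R(r)=r$ and target $\overline\sqcap^R(r)=r$. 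So the source, target and identity-assigning maps are well defined.

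The main step is that the partial composition is well defined: if $g,g'\in\mathsf g(H)$ and $\sqcap^R(g)=\overline\sqcap^R(g')$, then $gg'\in\mathsf g(H)$, with source $\sqcap^R(g')$ and target $\overline\sqcap^R(g)$. For the source and target I would combine \eqref{piRproduct} with Lemma \ref{lem:gr-like}(i):
$$
\sqcap^R(gg')=\sqcap^R(\sqcap^R(g)\,g')=\sqcap^R(\overline\sqcap^R(g')\,g')=\sqcap^R(g'),\qquad
\overline\sqcap^R(gg')=\overline\sqcap^R(g\,\overline\sqcap^R(g'))=\overline\sqcap^R(g\,\sqcap^R(g))=\overline\sqcap^R(g).
$$
Then $\Delta(gg')=\Delta(g)\Delta(g')=(g\ox g)(g'\ox g')=gg'\ox gg'$ by \eqref{multiplicativitycomultiplication}; $\epsilon(gg')=\epsilon\sqcap^R(gg')=\epsilon(\sqcap^R(g'))=\epsilon(g')=1$; and the two remaining clauses of Definition \ref{groupoidlike-set} for $gg'$ follow by substituting the source/target identities just obtained, e.g. $\Delta\sqcap^R(gg')=\Delta\sqcap^R(g')=\sqcap^R(g')\ox\sqcap^R(g')=\sqcap^R(gg')\ox\sqcap^R(gg')$, and similarly with $\overline\sqcap^R$ and $g$. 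In particular $gg'$ is again a morphism whose source and target are objects, so composition lands back in the same structure.

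It then remains to check the category axioms. The unit laws are precisely Lemma \ref{lem:gr-like}(i): the identity at $\sqcap^R(g)$ is $\sqcap^R(g)\in\mathcal O$, it is composable with $g$, and $g\cdot\sqcap^R(g)=g$; dually $\overline\sqcap^R(g)\cdot g=g$. Associativity is inherited from associativity of the multiplication of $H$: if $\sqcap^R(g)=\overline\sqcap^R(g')$ and $\sqcap^R(g')=\overline\sqcap^R(g'')$, then the source/target formulas of the previous paragraph show that $gg'$ composes with $g''$ and that $g$ composes with $g'g''$, so both $(gg')g''$ and $g(g'g'')$ are defined and they agree as elements of $H$. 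I do not expect a genuine obstacle here; the only point needing care is the bookkeeping --- the extra clauses in Definition \ref{groupoidlike-set} are tailored exactly so that $\sqcap^R,\overline\sqcap^R$ land among the objects and so that $\mathsf g(H)$ is closed under the partial product --- together with fixing the composition convention (which of the two arguments plays the role of the last-applied morphism) so that it reproduces the span-type composition on the weak bialgebras $kA$ of Section \ref{sec:functor_k}.
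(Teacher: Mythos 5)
Your proposal is correct and follows essentially the same route as the paper's proof: closure of $\mathsf g(H)$ under the partial product via multiplicativity of $\Delta$ and the identities $\sqcap^R(gg')=\sqcap^R(g')$, $\overline\sqcap^R(gg')=\overline\sqcap^R(g)$ obtained from \eqref{piRproduct} and Lemma \ref{lem:gr-like}(i), with the unit laws given by Lemma \ref{lem:gr-like}(i) and associativity inherited from $H$. The only (immaterial) deviation is your counit computation $\epsilon(gg')=\epsilon\sqcap^R(gg')=\epsilon(g')$, where the paper instead uses $\epsilon(gg')=\epsilon(g\,\overline\sqcap^R(g'))=\epsilon(g\sqcap^R(g))=\epsilon(g)$; both are valid.
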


\begin{proof}
First we check that $\mathsf{g}(H)$ is closed under the composition. Let
$g,g'\in \mathsf{g}(H)$ such that
$\sqcap^R(g)=\overline{\sqcap}^R(g^{\prime})$. Then 
\begin{eqnarray*}
\Delta(gg^{\prime})&=&
\Delta(g)\Delta(g')=(g\ox g)(g'\ox g')=gg'\ox gg'
\qquad \textrm{and}\\ 
\epsilon(gg')&=&
\epsilon
(g\overline{\sqcap}^R(g'))=
\epsilon(g\sqcap^R(g))=\epsilon(g)=1.
\end{eqnarray*}
Since
\begin{equation}\label{eq:s&t}
\begin{array}{rl}
\sqcap^R(gg')\stackrel{\eqref{piRproduct} }=&
\sqcap^R(\sqcap^R(g)g')=
\sqcap^R(\overline\sqcap^R(g')g')=
\sqcap^R(g')
\qquad \textrm{and}\\
\overline\sqcap^R(gg')\stackrel{\eqref{piRproduct}}=&
\overline\sqcap^R(g\overline\sqcap^R(g'))=
\overline \sqcap^R(g\sqcap^R(g))=
\overline \sqcap^R(g),
\end{array}
\end{equation}
also 
\begin{eqnarray*}
\Delta\sqcap^R(gg')&=&
\Delta\sqcap^R(g')=
\sqcap^R(g') \ox \sqcap^R(g')=
\sqcap^R(gg')\ox \sqcap^R(gg')
\qquad \textrm{and}\\
\Delta\, \overline\sqcap^R(gg')\ &=&
\Delta\ \overline\sqcap^R(g)\ =
\, \overline\sqcap^R(g)\, \ox \, \overline\sqcap^R(g)\, =
\overline\sqcap^R(gg') \ox \overline\sqcap^R(gg')
\end{eqnarray*}
hold 
and we conclude that $gg'\in \mathsf{g}(H)$. Associativity of the
composition is evident because of associativity of the
multiplication. 
The object set is clearly a subset of the morphism set; and for any $g\in
\mathsf{g}(H)$, both $\sqcap^R(g)$ and $\overline \sqcap^R(g)$ belong to the
object set. 
The restrictions of $\sqcap^R$ and $\overline{\sqcap}^R$ give the source and
target maps, respectively, by part (i) of Lemma
\ref{lem:gr-like}. It follows by \eqref{eq:s&t} that the composition is
compatible with the source and target maps. 
\end{proof}

The category in Proposition \ref{prop:g_obj} is also denoted by $\mathsf g
(H)$. 

\begin{remark}\label{rem:symm_restored}
For an arbitrary weak bialgebra $H$, the construction of the category $\mathsf
g(H)$ in Proposition \ref{prop:g_obj} is not symmetric under the simultaneous
replacements $\sqcap^R \leftrightarrow \overline \sqcap^L$, $\overline
\sqcap^R \leftrightarrow \sqcap^L$. This is a consequence of the choice we
made in the definition of morphisms between bimonoids (so in particular in the
definition of morphisms in $\mathsf{wba}$), see Remark \ref{rem:symm}. In
light of part (iii) of Lemma \ref{lem:gr-Hopf}, the symmetry of
the category $\mathsf g(H)$ under the simultaneous replacements $\sqcap^R
\leftrightarrow \overline \sqcap^L$, $\overline \sqcap^R \leftrightarrow
\sqcap^L$ is restored whenever $H$ is a weak Hopf algebra. 
\end{remark}

\begin{proposition}\label{prop:g_mor}
Any morphism $H\to H'$ in $\mathsf{wba}$ restricts to a functor $\mathsf g (H)
\to \mathsf g (H')$.
\end{proposition}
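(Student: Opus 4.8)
The plan is to take a morphism $Q:H\to H'$ in $\mathsf{wba}$ (with $(\mathrm{co})$restriction $q:\sqcap^R(H)\to\sqcap^{\prime R}(H')$ a morphism in $\mathsf{sfr}$) and show that $Q$ maps $\mathsf g(H)$ into $\mathsf g(H')$ and respects the category structure described in Proposition \ref{prop:g_obj}. First I would fix $g\in\mathsf g(H)$ and verify the four defining conditions for $Q(g)$. Since $Q$ is a coalgebra map, $\Delta'(Q(g))=(Q\ox Q)\Delta(g)=Q(g)\ox Q(g)$ and $\epsilon'(Q(g))=\epsilon(g)=1$; so $Q(g)$ is group-like. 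For the remaining two conditions, I would invoke the compatibility of $Q$ with $\sqcap^R$ and $\overline\sqcap^R$ (the first two diagrams in Theorem \ref{thm:wba_mor}, which give $\sqcap^{\prime R}Q=Q\sqcap^R$ and $\overline\sqcap^{\prime R}Q=Q\overline\sqcap^R$). Then $\Delta'\sqcap^{\prime R}(Q(g))=\Delta'Q(\sqcap^R(g))=(Q\ox Q)\Delta\sqcap^R(g)=(Q\ox Q)(\sqcap^R(g)\ox\sqcap^R(g))=\sqcap^{\prime R}(Q(g))\ox\sqcap^{\prime R}(Q(g))$, using that $\Delta\sqcap^R(g)=\sqcap^R(g)\ox\sqcap^R(g)$ because $g\in\mathsf g(H)$; the computation for $\overline\sqcap^{\prime R}$ is identical. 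Hence $Q(g)\in\mathsf g(H')$.

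Next I would check functoriality with respect to the source, target, identity and composition structure from Proposition \ref{prop:g_obj}. On objects: if $r\in\sqcap^R(H)$ is group-like with $\epsilon(r)=1$, then $Q(r)$ is group-like with counit $1$ and lies in $\sqcap^{\prime R}(H')$ (since $Q$ restricts to $q$ there), so $Q$ sends objects of $\mathsf g(H)$ to objects of $\mathsf g(H')$; the identity morphisms are the inclusions, which $Q$ respects trivially. Source and target compatibility is exactly the identities $\sqcap^{\prime R}Q=Q\sqcap^R$ and $\overline\sqcap^{\prime R}Q=Q\overline\sqcap^R$ already used above, restricted to $\mathsf g(H)$. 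The composition in $\mathsf g(H)$ is multiplication in $H$, so I must show $Q(gg')=Q(g)Q(g')$ whenever $\sqcap^R(g)=\overline\sqcap^R(g')$. This is the one point that is \emph{not} immediate, because morphisms in $\mathsf{wba}$ are \emph{not} algebra homomorphisms in general; what is available is the weak multiplicativity condition (the fourth diagram of Theorem \ref{thm:wba_mor}), namely $Q(hh')=Q(h1_1)Q(\sqcap^R(1_2)h')$ for all $h,h'\in H$.

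The main obstacle is therefore deducing $Q(gg')=Q(g)Q(g')$ from weak multiplicativity under the hypothesis $\sqcap^R(g)=\overline\sqcap^R(g')$. I would argue as follows: apply the weak multiplicativity identity with $h=g$, $h'=g'$ to get $Q(gg')=Q(g1_1)Q(\sqcap^R(1_2)g')$. Now I would like to ``contract'' $1_1\ox1_2$ against $g$ and $g'$. Using part (i) of Lemma \ref{lem:gr-like} one has $g\sqcap^R(g)=g$ and $\overline\sqcap^R(g')g'=g'$, and using \eqref{delta1} in the form $\Delta(1)=\sqcap^R(1_2)\ox\sqcap^L(1_1)$ together with the counital properties \eqref{counitalpropertiesofmaps} and the fact that $\sqcap^R(g)=\overline\sqcap^R(g')$ is a group-like idempotent in the separable Frobenius algebra $\sqcap^R(H)$, I would rewrite $g1_1\ox\sqcap^R(1_2)g'$ so that the factor inserted between is precisely $\sqcap^R(g)=\overline\sqcap^R(g')$, giving $g1_1\ox\sqcap^R(1_2)g' \mapsto g\ox g'$ after applying $Q$ on each leg and multiplying — more precisely, $Q(g1_1)Q(\sqcap^R(1_2)g')$ collapses to $Q(g)Q(g')$. (Concretely: since $\sqcap^R(g)$ is group-like, $\Delta(1)$ lies in $\sqcap^R(H)\ox\sqcap^L(H)$, and multiplying the first leg on the right by $\sqcap^R(g)$ and the second on the left by $\overline\sqcap^R(g')=\sqcap^R(g)$ is absorbed; one checks $g1_1\sqcap^R(g)\ox\sqcap^R(1_2)g'=g\ox g'$ using $\sqcap^R(g)\in\sqcap^R(H)$, \eqref{delta1} and \eqref{counitalpropertiesofmaps}.) This uses only earlier results and the structure of $\mathsf g(H)$, so the remaining verification is routine. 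Finally, the assignment $Q\mapsto(\text{restriction to }\mathsf g(H))$ is evidently functorial in $Q$, completing the proof.
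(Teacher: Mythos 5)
Your proposal follows the paper's proof almost step for step: membership of $Q(g)$ in $\mathsf g(H')$ via the coalgebra-map property together with the commutation of $Q$ with $\sqcap^R$ and $\overline\sqcap^R$; compatibility with source, target and identities from the same commutations; and preservation of composition from the weak multiplicativity condition. You also correctly isolate the only non-routine point, namely that $Q(g1_1)Q(\sqcap^R(1_2)g')$ must collapse to $Q(g)Q(g')$ for composable $g,g'$.

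However, the mechanism you sketch for that collapse does not work as written. The identity you propose to check, $g1_1\sqcap^R(g)\ox\sqcap^R(1_2)g'=g\ox g'$, is not the relevant one, and the tools you cite (\eqref{delta1}, \eqref{counitalpropertiesofmaps} and the separability of $\sqcap^R(H)$) only let you move the element $\sqcap^R(g)=\overline\sqcap^R(g')$ from one tensor leg to the other, where it is immediately reabsorbed by $g\sqcap^R(g)=g$ or $\overline\sqcap^R(g')g'=g'$; this returns you to $g1_1\ox\sqcap^R(1_2)g'$ and the argument becomes circular. The step that actually does the work is \eqref{idpiLdeltah}: $h1_1\ox\sqcap^R(1_2)=h_1\ox\sqcap^R(h_2)$, so that for group-like $g$ one gets $g1_1\ox\sqcap^R(1_2)=g\ox\sqcap^R(g)$, whence
$$
Q(gg')=Q(g1_1)Q(\sqcap^R(1_2)g')=Q(g)Q(\sqcap^R(g)g')=Q(g)Q(\overline\sqcap^R(g')g')=Q(g)Q(g'),
$$
using the composability condition $\sqcap^R(g)=\overline\sqcap^R(g')$ and part (i) of Lemma \ref{lem:gr-like} in the last two equalities. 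This is exactly how the paper argues; with this replacement your proof is complete.
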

\begin{proof}
Let $Q:H\rightarrow H'$ be a morphism in $\mathsf{wba}$. 
First we need to see that it restricts to a map $\mathsf
g(Q)=Q_{|\mathsf g (H)}:\mathsf g (H)\rightarrow\mathsf g (H')$. Since $Q$ is
in particular a coalgebra map, it follows for all $g\in \mathsf g(H)$ that 
$$
\Delta' Q(g)=
(Q\ox Q)\Delta(g)=
Q(g)\ox Q(g)
\quad \textrm{and}\quad 
\epsilon' Q(g)=
\epsilon(g)=
1.
$$
Since $Q$ commutes also with $\sqcap^R$ and $\overline \sqcap^R$,
\begin{eqnarray*}
\Delta' \sqcap^{\prime R} Q (g)&=&
(Q\ox Q)\Delta \sqcap^R(g) =
Q\sqcap^R(g)\ox Q\sqcap^R(g)=
\sqcap^{\prime R}Q(g)\ox \sqcap^{\prime R}Q(g)\\
\Delta'\ \overline{\sqcap}^{\prime R} Q(g)&=&
(Q\ox Q)\Delta \,\overline{\sqcap}^R\ (g)=
Q\,\overline{\sqcap}^R(g)\,\ox\, Q\,\overline{\sqcap}^R(g)\,=
\overline{\sqcap}^{\prime R}Q(g)\ox \overline{\sqcap}^{\prime R}Q(g).
\end{eqnarray*}
This proves $Q(g)\in \mathsf{g}(H')$. 
Also from the compatibility of $Q$ with $\sqcap^R$ and $\overline
\sqcap^R$, it follows that $\mathsf g(Q)$
respects the source and target maps as well as the unit morphisms.
It preserves the composition by the weak multiplicativity condition; 
that is, by 
\begin{eqnarray*}
Q(gg')&=&
Q(g1_1)Q(\sqcap^R(1_2)g')\stackrel{\eqref{idpiLdeltah}}= 
Q(g_1)Q(\sqcap^R(g_2)g')\\
&=&Q(g)Q(\sqcap^R(g)g')=
Q(g)Q(\overline \sqcap^R(g')g')=
Q(g)Q(g'),
\end{eqnarray*}
for all $g,g'\in \mathsf{g}(H)$ such that $\sqcap^R(g)=\overline{\sqcap}^R(g')$.
\end{proof}

Clearly, the group-like elements in any coalgebra over a field are linearly
independent, see \cite[Theorem 2.1.2]{Abe}. Hence the elements of $\mathsf
g(H)$ in a weak bialgebra $H$ are linearly independent. Since the right
subalgebra $\sqcap^R(H)$ of $H$ is finite dimensional, this proves that the
cardinality of the object set of $\mathsf g (H)$ --- that is, of the set
$\mathsf g (H)\cap \sqcap^R(H)$ --- is finite. So we conclude by Proposition
\ref{prop:g_obj} and Proposition \ref{prop:g_mor} that there is a functor
$\mathsf g$ from $\mathsf{wba}$ to the category $\mathsf{cat}$ of small
categories with finitely many objects.
 
\section{The right adjoint of the ``free vector space'' functor.}
\label{sec:adjoint}

The aim of this section is to show that the 
functor $\mathsf g$ in Section \ref{sec:gr-like} is right adjoint of the
``free vector space'' functor $\mathsf k$ in Section \ref{sec:functor_k}. That
is, to prove the following. 
\begin{theorem}
For any small category $A$ with finitely many objects, and for any weak
bialgebra $H$ over a given field $k$, there is a bijection
$\mathsf{wba}(\mathsf k(A),H)\cong \mathsf{cat}(A,\mathsf g (H))$ which
is natural in $A$ and $H$. 
Moreover, the image of $1_{\mathsf k(-)}$ under this bijection (that is,
the unit of the adjunction $\mathsf k \dashv \mathsf g$) is a natural
isomorphism. 
\end{theorem}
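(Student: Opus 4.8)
The plan is to establish the natural bijection $\mathsf{wba}(\mathsf k(A),H)\cong \mathsf{cat}(A,\mathsf g(H))$ by first disposing of the "representable" case $A=\mathbbm 2$ — which is already done in Proposition~\ref{prop:gr-like} — and then leveraging the fact that an arbitrary small category $A$ with object set $X$ is a colimit of copies of $\mathbbm 2$ and of the one-object discrete categories, glued along the discrete category on $X$. Concretely, for a morphism $f:\mathsf k(A)\to H$ in $\mathsf{wba}$ and each morphism $a\in A$, composing $f$ with the coalgebra map $\mathsf k(\widehat a):\mathsf k(\mathbbm 2)\to \mathsf k(A)$ induced by the functor $\widehat a:\mathbbm 2\to A$ classifying $a$ yields, via Proposition~\ref{prop:gr-like}, an element $g_a:=f(a)\in\mathsf g(H)$. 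The assignment $a\mapsto g_a$ is the object-and-morphism data of a functor $A\to\mathsf g(H)$: that it sends identities to identities and respects source/target follows from the fact (recorded in Section~\ref{sec:functor_k}) that in $\mathsf k(A)$ one has $\sqcap^R(a)=s(a)$, $\overline\sqcap^R(a)=t(a)$, together with the compatibility of a $\mathsf{wba}$-morphism with $\sqcap^R$ and $\overline\sqcap^R$ from Theorem~\ref{thm:wba_mor}; that it is functorial on composites follows from the weak-multiplicativity axiom of Theorem~\ref{thm:wba_mor} applied to $Q=f$, exactly as in the proof of Proposition~\ref{prop:g_mor} (using $E(a\ox b)= a1_1\ox\sqcap^R(1_2)b$ and evaluating on the group-like basis of $\mathsf k(A)\ox\mathsf k(A)$, which picks out precisely the composable pairs).

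Conversely, given a functor $\Phi:A\to\mathsf g(H)$, define $Q:=\mathsf k(\Phi\text{ on morphisms}):\mathsf k(A)\to H$ to be the $k$-linear extension of the morphism map of $\Phi$. Since the morphisms of $A$ form a basis of $\mathsf k(A)$ and each $\Phi(a)$ lies in $\mathsf g(H)$ (hence is group-like with $\epsilon\Phi(a)=1$), the map $Q$ is automatically a coalgebra map, as in the proof of Proposition~\ref{fAA'}. The four diagrams of Theorem~\ref{thm:wba_mor} are then checked on the basis: compatibility with $\sqcap^R$ and $\overline\sqcap^R$ holds because $\Phi$ respects source and target and because for $a\in A$ we have $\sqcap^R(a)=s(a)$, $\sqcap^R_{H}(\Phi(a))=$ source of $\Phi(a)$ in $\mathsf g(H)$, etc.; compatibility with the Nakayama automorphism $\sqcap^R\sqcap^L$ is redundant over $\mathsf k(A)$ by the identity $\sqcap^L=\overline\sqcap^R$ valid there (as in Proposition~\ref{fAA'}); and weak multiplicativity $Q(ab)=Q(a1_1)Q(\sqcap^R(1_2)b)$ reduces, after the same computation of the range of $E$ as in Proposition~\ref{fAA'}, to the statement that $\Phi$ preserves composition of composable pairs and kills non-composable ones — which is exactly functoriality of $\Phi$ into $\mathsf g(H)$, whose composition is the restricted multiplication (Proposition~\ref{prop:g_obj}). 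These two assignments are visibly mutually inverse, and naturality in $A$ (precomposition with functors) and in $H$ (postcomposition, using Proposition~\ref{prop:g_mor}) is routine.

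For the final clause, the unit $\eta_A:A\to\mathsf g(\mathsf k(A))$ is obtained by taking $H=\mathsf k(A)$ and $f=1_{\mathsf k(A)}$, so $\eta_A$ sends a morphism $a\in A$ to the element $a\in\mathsf g(\mathsf k(A))$ and an object $x\in X$ to $x=\sqcap^R(x)\in\sqcap^R(\mathsf k(A))$. I need $\eta_A$ to be an isomorphism of categories, i.e.\ a bijection on objects and on morphisms. For morphisms: the elements of $A$ are precisely the group-like elements of $\mathsf k(A)$ — any group-like element is a nonzero scalar combination that must, by linear independence of group-likes and the diagonal form of $\Delta$, be a single basis vector $a$ with $\epsilon(a)=1$ — and each such $a$ lies in $\mathsf g(\mathsf k(A))$ because, by the source/target identities, $\sqcap^R(a)=s(a)$ and $\overline\sqcap^R(a)=t(a)$ are themselves group-like. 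So $\mathsf g(\mathsf k(A))$ has morphism set exactly $A$ and $\eta_A$ is the identity on morphisms; the object part is then forced to be the identity on $X$ as well. Hence $\eta_A$ is an isomorphism, natural in $A$.

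\textbf{The main obstacle} is the book-keeping in verifying the weak-multiplicativity diagram of Theorem~\ref{thm:wba_mor} for $Q=\mathsf k(\Phi)$ and for the passage $f\mapsto(a\mapsto f(a))$: one must correctly identify the range of $E(-\ox-)= (-)1_1\ox\sqcap^R(1_2)(-)$ inside $\mathsf k(A)\ox\mathsf k(A)$ as the span of composable pairs $a\ox b$ with $s(a)=t(b)$ (the computation already carried out in Proposition~\ref{fAA'}), and then match this cleanly against the definition of composition in the category $\mathsf g(H)$ from Proposition~\ref{prop:g_obj}. Everything else — coalgebra-map checks, $\sqcap^R/\overline\sqcap^R$-compatibility, redundancy of the Nakayama condition, and the characterization of group-like elements of $\mathsf k(A)$ — is a direct combination of facts already recorded in Sections~\ref{sec:functor_k}, \ref{sec:gr-like} and Propositions~\ref{prop:gr-like}, \ref{prop:g_obj}, \ref{prop:g_mor}, \ref{fAA'}.
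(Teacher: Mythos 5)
Your argument is essentially the paper's proof: the bijection is restriction of a $\mathsf{wba}$-morphism to the group-like basis in one direction and linear extension of the morphism map of a functor in the other, with the unit shown to be an isomorphism by identifying the group-like elements of $\mathsf k(A)$ with $A$ via linear independence. The detour through classifying functors $\widehat a:\mathbbm 2\to A$ and the colimit framing add nothing beyond the plain restriction $f\mapsto f_{|A}$, but they do no harm.

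One imprecision worth fixing: you claim the Nakayama diagram of Theorem \ref{thm:wba_mor} is ``redundant over $\mathsf k(A)$ by the identity $\sqcap^L=\overline\sqcap^R$ valid there, as in Proposition \ref{fAA'}.'' That redundancy argument works in Proposition \ref{fAA'} only because there the \emph{target} is also of the form $\mathsf k(A')$, so $\sqcap^{\prime L}=\overline\sqcap^{\prime R}$ on both sides. For a general target $H$ the left-hand side of the diagram is $\sqcap^R\sqcap^L Q(a)$, and you still need to know that this equals $\overline\sqcap^R Q(a)$; this is exactly part (iii) of Lemma \ref{lem:gr-like}, which applies because $Q(a)\in\mathsf g(H)$ (not merely group-like), and is the ingredient the paper invokes at this point. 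With that one-line correction your proof is complete.
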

\begin{proof}
We use the same symbol $A$ to denote the set of morphisms in the
category $A$.

First we show that the to-be-unit of the adjunction $\mathsf k \dashv
\mathsf g$ is a natural isomorphism. That is, for any category $A$ (with
finitely many objects) the functor $A\to \mathsf{gk}(A)$, $a\mapsto a$ is 
an isomorphism. This amounts to checking its bijectivity on the sets of
morphisms. Injectivity is obvious. In order to see
its surjectivity, let us take some $p\in \mathsf g\mathsf k(A)$. Let
us write $p=\sum_{a\in A} \lambda_a a$, with $\lambda_a\in k$ non-zero
at most for finitely many $a\in A$. Then from the requirement that $p$ is 
group-like, 
$$ 
\Delta(p)=p\ox p=\sum_{a,b\in A}\lambda_a\lambda_b a\ox b.
$$
On the other hand, by linearity of $\Delta$, 
$$
\Delta(p)=\sum_{a\in A}\lambda_a\Delta(a)=\sum_{a\in A}\lambda_a a\ox a.
$$
Since $\{a\ox b\}_{a,b\in A}$ is a linearly independent subset in 
$kA \otimes kA$, 
we conclude that $\lambda_a$ is non-zero at most for one element $a\in
A$. 
On the other hand, since 
\begin{equation*}
 1=\epsilon(p)=\lambda_a\epsilon(a)=\lambda_a,
\end{equation*}
we have $p=a\in A$.

We claim next that the desired bijection $\phi_{A,H}:
\mathsf{wba}(\mathsf k (A),H)\to \mathsf{cat}(A,\mathsf{g}(H))$ 
takes any morphism $Q:kA\to H$ to $Q_{|A}$, its restriction to $A\cong
\mathsf{gk}(A)$. 
By Proposition \ref{prop:g_mor}, $Q$ restricts to a functor $A\cong
\mathsf{gk} (A)\to \mathsf g(H)$; so that $\phi_{A,H}$ is well defined. 
Naturality of $\phi_{A,H}$ is evident.
Since $A$ is a basis of the vector space $kA$, the map $\phi_{A,H}$ is
injective. In order to show surjectivity of $\phi_{A,H}$, consider some
functor $h:A\rightarrow \mathsf{g} (H)$. Since $A$ is a basis of
the vector space $kA$, it can be extended to a unique linear map
$\widetilde{h}: kA\rightarrow H$. Let us see that $\widetilde{h}$ is a
morphism of weak bialgebras and hence $h=\phi_{A,H}(\widetilde
h)$. For any $a\in A$, $h(a)\in \mathsf g(H)$ so $\Delta h(a)=h(a)\ox h(a)$
and $\epsilon h(a)=1$. Thus $h$ extends to a coalgebra map $\widetilde
h$. The weak multiplicativity of $\widetilde h$ follows from the fact
that $h$ preserves the composition. Indeed, for $a,b\in A$,
$$
\widetilde h(a(1_{kA})_1)\widetilde h(\sqcap^R((1_{kA})_2)b)=
\delta_{s(a),t(b)}h(a)h(b)=
\delta_{s(a),t(b)}h(a.b)=
\widetilde h(ab).
$$
Since $h$ preserves the source and target maps, $\widetilde h$ commutes
with $\sqcap^R$ and $\overline{\sqcap}^R$. Finally, by part (iii) of
Lemma \ref{lem:gr-like}, 
$$
\sqcap^R\sqcap^L h (a)=
\overline \sqcap^R h (a)= 
h{{t(a)}}=
h\sqcap^R_{kA}\sqcap^L_{kA}(a) \qquad \forall a\in A,
$$
hence $\sqcap^R\sqcap^L \widetilde h=\widetilde h\sqcap^R_{kA}\sqcap^L_{kA}$
follows by linearity. 
\end{proof}

The counit of the above adjunction $\mathsf{k}\dashv \mathsf{g}$ is not
an isomorphism in general (as it is not so for usual, non-weak bialgebras; see
for example \cite{Abe}). Consider for example the weak bialgebra on the 
vector space $k\mathbbm{2}$ from Remark \ref{rem:smaller}. This weak bialgebra
$k\mathbbm{2}$ is three dimensional, while applying to it the functor
$\mathsf{kg}$ we get a one dimensional weak bialgebra. So they cannot
 be isomorphic. Another counterexample was kindly suggested by the
referee: For any (non-zero) weak bialgebra $H$ for which there are no
group-like elements in $\sqcap^R(H)$, $\mathsf{kg}(H)$ is the zero dimensional
weak bialgebra. 

\begin{proposition}\label{prop:counitadjunction}
The component $\phi_{\mathsf g(H),H}^{-1}(\mathsf g(H))
:\mathsf{k}\mathsf{g}(H)\to H$ of the counit of the
adjunction $\mathsf k \dashv \mathsf g:\mathsf{wba}\to \mathsf{cat}$ is an
isomorphism if and only if $H$ is a pointed cosemisimple weak bialgebra. 
\end{proposition}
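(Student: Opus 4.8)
The plan is to begin by making the counit component concrete. Write $c_H:=\phi_{\mathsf g(H),H}^{-1}(1_{\mathsf g(H)})\colon \mathsf k\mathsf g(H)=k\,\mathsf g(H)\to H$ for the morphism in question (the identity functor of the category $\mathsf g(H)$ being the $\mathsf g(H)$ appearing in the statement). By the description of $\phi^{-1}_{A,H}$ in the proof of the adjunction theorem of Section~\ref{sec:adjoint}, $c_H$ is nothing but the linear extension of the inclusion $\mathsf g(H)\hookrightarrow H$; that is, it carries each basis vector $g\in\mathsf g(H)$ of $k\,\mathsf g(H)$ to the element $g\in H$. Since the elements of $\mathsf g(H)$ are group-like in $H$, and group-like elements of a coalgebra over a field are linearly independent (\cite[Theorem~2.1.2]{Abe}), the map $c_H$ is injective. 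As a bijective morphism of weak bialgebras is an isomorphism in $\mathsf{wba}$, it follows that $c_H$ is an isomorphism if and only if it is surjective, equivalently if and only if $\mathsf g(H)$ spans $H$ over $k$. This reduces the proposition to the equivalence: $\mathsf g(H)$ spans $H$ $\iff$ $H$ is pointed cosemisimple.

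For the ``only if'' implication, I would argue as follows. Assume $\mathsf g(H)$ spans $H$. Since $\mathsf g(H)$ is contained in the set $G(H):=\{g\in H:\Delta(g)=g\ox g,\ \epsilon(g)=1\}$ of group-like elements, the coalgebra $H$ is spanned by its group-like elements; hence its coradical equals $H$ itself and admits a basis of group-like elements, which is precisely the condition that $H$ be pointed cosemisimple (cf.\ \cite{Abe}).

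For the ``if'' implication, assume $H$ is pointed cosemisimple, so that $H=kG(H)$ has a basis of group-like elements. The key observation is that such a coalgebra is automatically cocommutative: for each $g\in G(H)$ the element $\Delta(g)=g\ox g$ is fixed by the flip, and these $g$ span $H$. Hence Proposition~\ref{prop:g(H)cocommutativeH} applies to $H$ and yields $\mathsf g(H)=G(H)$. In particular $\mathsf g(H)=G(H)$ spans $H$, so by the first paragraph $c_H$ is an isomorphism, as required.

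There is no single computationally demanding step here; the argument rests on two points that must be pinned down correctly. The first is the explicit shape of the counit, which gives its injectivity for free and thereby turns ``isomorphism'' into the more tractable ``surjectivity''. The second --- and this is the real leverage point --- is the remark that a pointed cosemisimple coalgebra is cocommutative: it is exactly what allows one to invoke Proposition~\ref{prop:g(H)cocommutativeH} and conclude $\mathsf g(H)=G(H)$ without having to examine the right subalgebra $\sqcap^R(H)$, or to confront the discrepancy between its Frobenius coalgebra structure and the restriction of $\Delta$ to it. (Compatibly with this, the examples recorded after the adjunction theorem --- $H=k\mathbbm{2}$ from Remark~\ref{rem:smaller}, or any non-zero $H$ with no group-like elements in $\sqcap^R(H)$ --- are precisely the cases in which $c_H$ fails to be surjective because $H$ is not pointed cosemisimple.)
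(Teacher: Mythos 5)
Your proposal is correct and follows essentially the same route as the paper: the key step in both is that a pointed cosemisimple coalgebra is spanned by group-likes, hence cocommutative, so Proposition~\ref{prop:g(H)cocommutativeH} yields $\mathsf g(H)=\{g\in H:\Delta(g)=g\ox g,\ \epsilon(g)=1\}$ and therefore $\mathsf{kg}(H)\cong H$, while the converse holds because $\mathsf{kg}(H)$ is manifestly pointed cosemisimple. Your version merely makes explicit the identification of the counit with the linear extension of the inclusion $\mathsf g(H)\hookrightarrow H$ and the resulting injectivity, which the paper leaves implicit.
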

\begin{proof}
Assume that $H$ is a pointed cosemisimple weak bialgebra; that is, 
that $H\cong k\{g\in H \ :\ \Delta(g)=g\ox g, \ \epsilon(g)=1\}$.
Since then $H$ is cocommutative, it follows by Proposition
\ref{prop:g(H)cocommutativeH} that $H\cong \mathsf{kg}(H)$.
The converse is clear since $\mathsf{kg}(H)$ is obviously a pointed
cosemisimple coalgebra. 
\end{proof}

\begin{corollary}\label{catwba-equivalence}
The functors $\mathsf k$ and $\mathsf g$ induce an equivalence between the
category of all small categories with finitely many objects, and the
full subcategory of $\mathsf{wba}$ of all pointed cosemisimple weak
bialgebras over a given field $k$.
\end{corollary}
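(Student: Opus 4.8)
The plan is to obtain the equivalence as an instance of the standard fact that every adjunction restricts to an adjoint equivalence between the full subcategory of objects at which the unit is invertible and the full subcategory of objects at which the counit is invertible. Write $\mathsf{pcs}$ for the full subcategory of $\mathsf{wba}$ on the pointed cosemisimple weak bialgebras over $k$. By the Theorem above we have an adjunction $\mathsf k\dashv \mathsf g$ whose unit $\eta$ is a natural isomorphism on all of $\mathsf{cat}$, while by Proposition \ref{prop:counitadjunction} the counit component $\epsilon_H$ at a weak bialgebra $H$ is an isomorphism exactly when $H$ lies in $\mathsf{pcs}$.

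The one point genuinely requiring an argument is that $\mathsf k$ corestricts to a functor $\mathsf{cat}\to\mathsf{pcs}$. I would check this either directly --- for a small category $A$ the morphisms of $A$ form a basis of $\mathsf k(A)=kA$ consisting of group-like elements, so $kA$ is pointed cosemisimple as a coalgebra --- or formally from the triangle identity $\epsilon_{\mathsf k(A)}\circ\mathsf k(\eta_A)=\mathrm{id}_{\mathsf k(A)}$: since $\eta_A$ is an isomorphism so is $\mathsf k(\eta_A)$, hence $\epsilon_{\mathsf k(A)}$ is an isomorphism, and Proposition \ref{prop:counitadjunction} then places $\mathsf k(A)$ in $\mathsf{pcs}$. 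On the other side, $\mathsf g$ already takes values in $\mathsf{cat}$ by Proposition \ref{prop:g_obj} and Proposition \ref{prop:g_mor}.

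Then I would conclude as follows. Restricting $\mathsf k\dashv \mathsf g$ along the full (and replete) inclusion $\mathsf{pcs}\hookrightarrow\mathsf{wba}$ yields an adjunction $\mathsf k\colon\mathsf{cat}\rightleftarrows\mathsf{pcs}\colon\mathsf g$ --- naturality of $\eta$ and of the restricted counit, and the two triangle identities, are inherited from $\mathsf{wba}$. Its unit is $\eta$, a natural isomorphism, and its counit is the restriction of $\epsilon$, a natural isomorphism by Proposition \ref{prop:counitadjunction}; an adjunction whose unit and counit are both natural isomorphisms is an adjoint equivalence, so $\mathsf k$ and $\mathsf g$ exhibit $\mathsf{cat}$ as equivalent to $\mathsf{pcs}$. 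There is no real obstacle beyond the corestriction noted above: the substantive content is already carried by the Theorem above and by Proposition \ref{prop:counitadjunction}, and the remainder is the purely formal ``fixed subcategories of an adjunction'' argument.
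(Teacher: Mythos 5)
Your proposal is correct and is exactly the argument the paper intends: the corollary is stated as an immediate consequence of the iso-unit adjunction $\mathsf k\dashv\mathsf g$ together with Proposition \ref{prop:counitadjunction}, via the standard restriction of an adjunction to the full subcategories on which unit and counit are invertible. Your verification that $\mathsf k$ corestricts to the pointed cosemisimple weak bialgebras (the group-like basis $A$ of $kA$, or equivalently the triangle identity) is the only detail the paper leaves implicit, and both of your ways of supplying it are sound.
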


Since over an algebraically closed field every cocommutative coalgebra is
pointed (see for example \cite[Theorem 2.3.3]{Abe}), we get the
following alternative form of Corollary \ref{catwba-equivalence}.

\begin{corollary}\label{cor:algclosed}
If $k$ is an algebraically closed field, then the
functors $\mathsf k$ and $\mathsf g$ induce an equivalence between the
category of all small categories with finitely many objects, and the full
subcategory of $\mathsf{wba}$ of all cocommutative cosemisimple weak
bialgebras. 
\end{corollary}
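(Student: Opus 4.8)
The plan is to deduce this directly from Corollary \ref{catwba-equivalence}, together with the cited fact (\cite[Theorem 2.3.3]{Abe}) that over an algebraically closed field every cocommutative coalgebra is pointed. The strategy is to show that the full subcategory of $\mathsf{wba}$ on the cocommutative cosemisimple weak bialgebras and the full subcategory on the pointed cosemisimple weak bialgebras have the same objects; since both are full subcategories, they then coincide, and the equivalence asserted here is literally the equivalence of Corollary \ref{catwba-equivalence}.

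First I would verify the inclusion ``cocommutative cosemisimple $\Rightarrow$ pointed cosemisimple''. If $H$ is a cocommutative cosemisimple weak bialgebra over the algebraically closed field $k$, then its underlying coalgebra is cocommutative, hence pointed by \cite[Theorem 2.3.3]{Abe}; combined with cosemisimplicity, this places $H$ in the full subcategory of pointed cosemisimple weak bialgebras.

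Next I would check the reverse inclusion ``pointed cosemisimple $\Rightarrow$ cocommutative cosemisimple''. As observed in the proof of Proposition \ref{prop:counitadjunction}, a pointed cosemisimple weak bialgebra $H$ is spanned (as a coalgebra) by its group-like elements; since for each group-like $g$ the element $\Delta(g)=g\ox g$ is symmetric, the comultiplication of $H$ is cocommutative. Hence the two classes of objects agree, so the corresponding full subcategories of $\mathsf{wba}$ are equal.

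Finally, invoking Corollary \ref{catwba-equivalence}, the functors $\mathsf k$ and $\mathsf g$ restrict to an equivalence between $\mathsf{cat}$ and the full subcategory of pointed cosemisimple weak bialgebras, which over an algebraically closed field is the full subcategory of cocommutative cosemisimple weak bialgebras; this is exactly the claim. I do not anticipate any real obstacle: all substantive content sits in Corollary \ref{catwba-equivalence} and in the quoted result on pointed coalgebras, and the only point needing care is the equality of the two full subcategories, which is the brief argument above.
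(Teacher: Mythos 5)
Your proposal is correct and matches the paper's (very brief) argument: the paper derives this corollary from Corollary \ref{catwba-equivalence} by noting that over an algebraically closed field cocommutative coalgebras are pointed, with the converse implication (pointed cosemisimple $\Rightarrow$ cocommutative) already implicit in the proof of Proposition \ref{prop:counitadjunction}. You merely spell out the identification of the two full subcategories more explicitly than the paper does.
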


\section{Restriction to Hopf bimonoids.}
\label{sec:Hopf}

The aim of this section is to study and compare the full subcategories of Hopf
monoids in the categories in Section \ref{sec:cat} and in Section \ref{sec:wba}.

\begin{definition}\label{def:hopfmonoid}(cf. 
\cite[pages 193-194]{Booker&Street})
Let $(\mathsf{C},\circ,I,\bullet,J)$ be a duoidal category. We say that a
bimonoid $H$ in $\mathsf{C}$ is a {\em Hopf monoid} if the induced monoidal
comonad $(-)\bullet H$ is a right Hopf comonad; that is, if 
\begin{equation}\label{eq:can}
\xymatrix@C=10pt{
(A\bullet H)\circ (B\bullet H)
\ar[rr]^-{\raisebox{7pt}{${}_{
(A\bullet \Delta)\circ (B\bullet H)}$}}&& 
(A\bullet H\bullet H)\circ (B\bullet H)\ar[r]^-\gamma&
((A\bullet H)\circ B)\bullet (H\circ H)
\ar[rr]^-{\raisebox{7pt}{${}_{
((A\bullet H)\circ B)\bullet \mu}$}}&& 
((A\bullet H)\circ B)\bullet H}
\end{equation}
--- to be denoted by $\beta_{A,B}$ --- 
is a natural isomorphism.
\end{definition}

\begin{proposition}
For any set $X$, a Hopf monoid in $\mathsf{span}(X)$ is precisely
a groupoid with object set $X$. 
\end{proposition}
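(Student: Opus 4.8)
The plan is to show that the canonical map $\beta_{A,B}$ of \eqref{eq:can} is a bijection of spans over $X$ for all spans $A$ and $B$ if and only if the small category $H$ encoded by the bimonoid is a groupoid. Throughout I would work with the explicit description of $\mathsf{span}(X)$: a bimonoid $H$ is a small category with object set $X$, whose morphism set I also denote $H$, with source/target $s,t:H\to X$, composition $\mu$ and identities $\eta$; the comonoid structure is the diagonal one. First I would compute, using the formulas for $\circ$, $\bullet$ and $\gamma$ from the paragraph on $\mathsf{span}(X)$, exactly what the underlying sets of the source and target of $\beta_{A,B}$ are. The domain $(A\bullet H)\circ(B\bullet H)$ consists of quadruples $((a,h),(b,k))$ with $s(a)=s(h)$, $t(a)=t(h)$, $s(b)=s(k)$, $t(b)=t(k)$ and $s(a)=s(h)=t(b)=t(k)$; the codomain $((A\bullet H)\circ B)\bullet H$ consists of triples $(((a,h),b),l)$ with the same $\bullet$-constraints on $(a,h)$, the $\circ$-constraint $s(a)=s(h)=t(b)$, and the outer $\bullet$-constraint $s((a,h),b)=s(l)$, $t((a,h),b)=t(l)$, i.e. $s(b)=s(l)$ and $t(a)=t(l)$. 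Then I would trace an element through the three maps in \eqref{eq:can}: $(A\bullet\Delta)$ sends $(a,h)$ to $(a,h,h)$ (diagonal), $\gamma$ reshuffles, and $((A\bullet H)\circ B)\bullet\mu$ composes the two copies of $H$; unwinding this I expect $\beta_{A,B}((a,h),(b,k))=(((a,h),b),h\cdot k)$, where $h\cdot k$ is defined because $s(h)=s(a)=t(b)=t(k)$ --- the relevant composability condition is exactly one of the constraints in the domain.

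Given this formula, the statement reduces to a purely combinatorial assertion about the category $H$. For the ``if'' direction, suppose $H$ is a groupoid. To invert $\beta_{A,B}$, from a triple $(((a,h),b),l)$ with $t(a)=t(l)$, $s(b)=s(l)$, $s(a)=s(h)=t(b)$, I must recover $k\in H$ with $s(k)=s(b)$, $t(k)=t(b)$, $h\cdot k=l$; the unique candidate is $k:=h^{-1}\cdot l$, and one checks the source/target and $\bullet$-constraints using that $H$ is a groupoid. So $\beta_{A,B}$ is a bijection, hence an isomorphism in $\mathsf{span}(X)$ (it is automatically a span morphism, being built from structure maps); naturality in $A,B$ is clear, so $H$ is a Hopf monoid. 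For the ``only if'' direction, I would exploit the freedom in choosing $A$ and $B$. The natural choice is to take $A=B=J=(X\times X,p_1,p_2)$, the terminal span, so that $A\bullet H\cong H$ and $(A\bullet H)\circ B\cong H$ (up to the obvious identifications); then $\beta_{J,J}$ becomes essentially the map $H\circ H\to H\circ J$... — more carefully, I would pick $A$ and $B$ cleverly so that $\beta_{A,B}$ being injective forces left-cancellation of composition and being surjective forces every morphism to have a right inverse, or vice versa. Concretely: fixing a morphism $h:x\to y$ in $H$ and letting $b$ range over morphisms into $x$, surjectivity of $\beta$ onto triples with a prescribed composite $l:z\to y$ shows that for every $l$ with $t(l)=y$ there is $k$ with $h\cdot k=l$; taking $l=\mathrm{id}_y$ gives a right inverse of $h$. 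Injectivity similarly gives that $h\cdot k=h\cdot k'$ implies $k=k'$. Together, every morphism is invertible, so $H$ is a groupoid.

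The main obstacle I anticipate is purely bookkeeping: correctly identifying the underlying sets of $(A\bullet H)\circ(B\bullet H)$ and $((A\bullet H)\circ B)\bullet H$ with all their source/target constraints, and then verifying that the composite of the three maps in \eqref{eq:can} is the map $((a,h),(b,k))\mapsto(((a,h),b),h\cdot k)$ that I have guessed. Once the formula for $\beta_{A,B}$ is pinned down, both directions of the equivalence are short, so essentially all the work is in choosing the test spans $A$ and $B$ in the ``only if'' direction so that injectivity and surjectivity of $\beta_{A,B}$ translate precisely into cancellativity and the existence of inverses; I expect that taking $A$ and $B$ to be (subspans of) the terminal span $J$, or the representable spans supported at a single pair of objects, will do the job cleanly. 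No deep idea is needed beyond this; the proposition is a direct unwinding of Definition \ref{def:hopfmonoid} in the combinatorial model $\mathsf{span}(X)$.
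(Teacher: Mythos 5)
Your proposal is correct and follows essentially the same route as the paper: both directions hinge on identifying $\beta_{A,B}$ as $((a,h),(b,k))\mapsto(((a,h),b),h\cdot k)$, specializing to $A=B=J$ so that $\beta_{J,J}$ becomes $(h,h')\mapsto(h,hh')$ from $H\circ H$ to $\{(h,h'):t(h)=t(h')\}$, extracting right inverses (and then two-sided inverses) from its bijectivity, and conversely inverting $\beta$ by $(a,h,b,l)\mapsto(a,h,b,h^{-1}l)$ when $H$ is a groupoid. The only cosmetic difference is that the paper packages the inverse of $\beta_{J,J}$ as a pair of maps $(l,r)$ and manipulates the resulting identities, whereas you argue directly with injectivity and surjectivity; these are equivalent.
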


\begin{proof}
Let $H$ be a Hopf monoid in $\mathsf{span}(X)$ and consider the induced
monoidal comonad $(-)\bullet H$. By assumption, the map \eqref{eq:can}
is an isomorphism for any objects $A,B$ in $\mathsf{span}(X)$. So in
particular, for $A=B=J = X\times X$, it is an isomorphism
from $((X\times X)\bullet H)\circ ((X\times X)\bullet
H)\cong H\circ H$ to 
$(((X\times X)\bullet H)\circ (X\times X)) \bullet H
\cong \{(h,h')\in H\times H :
t(h)=t(h')\}$. 
It sends $(h,h')$ to $(h,
hh')$. 
We can write its inverse 
in the form $(h,h')\mapsto (l(h,h'),r(h,h'))$, in terms of some maps $l$
and $r$ from $H\times H$ to $H$ satisfying the conditions
\begin{eqnarray}
sl(h,h')&=&tr(h,h')\nonumber\\
sr(h,h')&=&s(h')\nonumber\\
tl(h,h')&=&t(h)\nonumber\\
l(h,h')&=&h\label{eq:l3}\\
l(h,h')r(h,h')&=&h'\label{eq:l4}
\end{eqnarray}
for all $h,h'\in H$ such that $t(h)=t(h')$ and 
\begin{equation}
r(h,hh')=h'\label{eq:r1}
\end{equation}
for all $h,h'\in H$ such that $s(h)=t(h')$. 
Using \eqref{eq:l3} to simplify \eqref{eq:l4} and substituting 
$h'={t(h)}$ in it, we obtain
\begin{equation}\label{rightinverse}
 hr(h,{t(h)})={t(h)}
\end{equation}
so that $r(h,{t(h)})$ is a right inverse of $h$. As the following
computation proves, it is also its left inverse. 
$$
r(h,{{t(h)}})h\stackrel{\eqref{eq:r1}}{=}
r(h,hr(h,{{t(h)}}) h)\stackrel{\eqref{rightinverse}}{=}
r(h,h)\stackrel{\eqref{eq:r1}}{=}
{{s(h)}}.
$$
Since this construction is valid for every $h\in H$, we showed that $H$ is a
groupoid. 

Conversely, if $H$ is a groupoid with object set $X$,
then $\beta_{A,B}: (a,h,b,h')\mapsto (a,h,b,hh')$ is an isomorphism with
the inverse $\beta_{A,B}^{-1}: (a,h,b,h')\mapsto (a,h,b, h^{-1}h')$. Therefore, 
by Definition \ref{def:hopfmonoid}, $H$ is a Hopf monoid. 
\end{proof}

\begin{proposition}
For any separable Frobenius (co)algebra $R$, a Hopf monoid in
$\mathsf{bim}(R^e)$ is precisely a weak Hopf algebra with right
subalgebra isomorphic to $R$. 
\end{proposition}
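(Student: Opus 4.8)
The plan is to make the canonical morphism $\beta_{A,B}$ of \eqref{eq:can} explicit and to recognise its invertibility as the existence of an antipode. By Theorem \ref{thm:wba_obj} we may assume that $H$ is a weak bialgebra with $\sqcap^R(H)=R$, equipped with the bimonoid structure $(\widetilde\mu,\widetilde\eta,\widetilde\Delta,\widetilde\epsilon)$ of Paragraphs \ref{frombimtowba} and \ref{fromwbatobim}; recall that $\widetilde\Delta$ is the corestriction of $\Delta$ along $\iota^\bullet_{H,H}$ and $\widetilde\mu$ the corestriction of $\mu$ along $\pi^\circ_{H,H}$. Tracing \eqref{eq:can} through the formula \eqref{bim(Re)interchangelaw} for $\gamma$, the comultiplication $\widetilde\Delta$, the multiplication $\widetilde\mu$, and the descriptions of $\bullet$ and $\circ$ as retracts of $\otimes$, one finds that, modulo these identifications, $\beta_{A,B}$ is the map
$$
(a\bullet h)\circ(b\bullet h')\ \longmapsto\ \big((a\bullet h_1)\circ b\big)\bullet h_2h'
$$
(up to the insertion of the separability idempotent $e_i\otimes f_i$ of $R$ required to land in the relevant balanced tensor product). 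Thus $\beta_{A,B}$ has exactly the shape of a Galois map, comultiplying the $H$-factor of the first argument and multiplying one of the resulting copies into the $H$-factor of the second argument.

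First I would treat the implication that a weak Hopf algebra $H$ is a Hopf monoid. Given an antipode $S$ obeying \eqref{antipodeaxioms}, the candidate inverse is
$$
\beta_{A,B}^{-1}\colon\quad \big((a\bullet h)\circ b\big)\bullet h'\ \longmapsto\ (a\bullet h_1)\circ\big(b\bullet S(h_2)h'\big).
$$
I would check that this descends to the quotient defining $\bullet$ and is $R^e$-bilinear --- using the anti-(co)multiplicativity of $S$ from \eqref{eq:S_a_multiplicative} together with the identities \eqref{piR&antipode} --- and then verify $\beta_{A,B}\beta_{A,B}^{-1}=\mathrm{id}$ and $\beta_{A,B}^{-1}\beta_{A,B}=\mathrm{id}$ from the antipode axioms and the counital identities \eqref{counitalpropertiesofmaps} and \eqref{idpiLdeltah}. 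Naturality in $A$ and $B$ is evident from the formulas, so by Definition \ref{def:hopfmonoid} $H$ is a Hopf monoid.

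For the converse I would read the antipode off the inverse of $\beta$, in the spirit of the proof for $\mathsf{span}(X)$ above, where the test objects $A=B=J$ were used. By naturality of $\beta$ it is enough to work with a single convenient pair; taking $A=B=J$ and using the isomorphism $J\bullet H\cong H$ from \eqref{eq:lambda_rho_bullet}, invertibility of $\beta_{J,J}$ supplies a $k$-linear map back from $(H\circ J)\bullet H$ to $H\circ H$. Composing it with a suitable insertion of $h$ (and of the unit $1$ into the remaining $H$-slot), and then with the counit $\epsilon$ applied to one tensorand of the resulting element of $H\circ H$ --- exactly as $S=(\epsilon\otimes H)\,\mathrm{can}^{-1}(-\otimes 1)$ recovers the antipode of an ordinary Hopf algebra from its Galois map --- yields a candidate $k$-linear map $S\colon H\to H$. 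The three axioms in \eqref{antipodeaxioms} are then to be deduced from the equations $\beta_{J,J}\beta_{J,J}^{-1}=\mathrm{id}=\beta_{J,J}^{-1}\beta_{J,J}$, the naturality of $\beta^{-1}$, the counital properties \eqref{counitalpropertiesofmaps}, and the identities \eqref{delta1} and \eqref{weakcomultiplicativityunit} used to strip off the spectator factors coming from $A$, $B$ and the extra copy of $H$.

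I expect the main obstacle to be the bookkeeping in both directions: pinning down the precise form of $\beta_{A,B}$ through the chain of comonoidal coherence morphisms and the twisted bimodule tensor products governed by the separability idempotent $e_i\otimes f_i$ and the Nakayama automorphism $\theta$, and, in the converse direction, checking that the map $S$ extracted from $\beta^{-1}$ is well defined and genuinely satisfies all three antipode axioms rather than merely the first two. As an alternative one could invoke \cite{Szlach:2001,Schauenburg:2003}, by which a weak bialgebra with right subalgebra $R$ is a right $R$-bialgebroid carrying a separable Frobenius structure, together with \cite{BrLaVi}, identifying $(-)\bullet H$ with the comonad on $\mathsf{bim}(R^e)$ induced by this bialgebroid: the right Hopf comonad condition then becomes the bijectivity of the usual Galois map, which for weak bialgebras is well known to be equivalent to the existence of an antipode.
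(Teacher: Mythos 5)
Your first direction (weak Hopf algebra $\Rightarrow$ Hopf monoid) is exactly the paper's argument: the same explicit form of $\beta_{A,B}$ and the same candidate inverse $(a\bullet h_1)\circ(b\bullet S(h_2)h')$, checked to be well defined and inverse to $\beta_{A,B}$ via \eqref{antipodeaxioms}, \eqref{eq:S_a_multiplicative} and \eqref{counitalpropertiesofmaps}. The problem is in your converse, specifically in the choice of test objects. Taking $A=B=J$ and using $J\bullet H\cong H$, the domain of $\beta_{J,J}$ becomes $H\circ H=H\ox_{R^e}H$, i.e.\ the quotient of $H\ox H$ balanced over \emph{both} $\sqcap^R(H)$ and $\overline\sqcap^L(\sqcap^R(H))=\sqcap^L(H)$; the codomain is similarly over-balanced. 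But the Galois map whose bijectivity characterizes weak Hopf algebras (\cite[Corollary 6.2]{Schauenburg:2003}) lives on $H1_1\ox\sqcap^R(1_2)H\cong H\ox_R H$, which is balanced only one-sidedly. So invertibility of $\beta_{J,J}$ is a priori weaker than bijectivity of the Galois map, and your extraction $S=(\epsilon\ox H)\,\beta_{J,J}^{-1}(-\ox 1)$ runs into the further problem that $\epsilon\ox H$ does not descend to these balanced tensor products ($\epsilon$ is only weakly multiplicative), so even writing down the candidate $S$ requires choosing splittings and then proving independence of the choices. This is more than bookkeeping: with $A=B=J$ you simply do not see enough of $H\ox H$.

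The paper circumvents this by evaluating $\beta$ at the \emph{free} bimodule $A=B=R^e\ox R^e$ with the actions $(r\ox s)((x\ox y)\ox(v\ox w))(r'\ox s')=(rx\ox ys)\ox(vr'\ox s'w)$. Freeness untwists every $\circ$- and $\bullet$-balancing, and $\beta_{A,B}$ becomes $R\ox R\ox R\ox(\text{Galois map})$ with the Galois map $H1_1\ox\sqcap^R(1_2)H\to 1_1H\ox 1_2H$, $h1_1\ox\sqcap^R(1_2)h'\mapsto h_1\ox h_2h'$ appearing on the nose; its bijectivity then gives the antipode by Schauenburg's criterion, with no need to construct $S$ by hand. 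Your ``alternative'' route is in substance the paper's route, but it is silent precisely on this step --- how to pass from the right Hopf comonad condition to the honest one-sidedly balanced Galois map --- which is where the whole difficulty of the converse is concentrated.
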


\begin{proof}
By Theorem \ref{thm:wba_obj}, a bimonoid in $\mathsf{bim}(R^e)$ is precisely a
weak bialgebra $H$ whose right subalgebra is isomorphic to $R$.
Assume that $H$ is a weak Hopf algebra with the antipode $S:H\rightarrow
H$. Then \eqref{eq:can} --- which takes now the explicit form 
$$
\beta_{A,B}((a\bullet h)\circ (b\bullet h'))=
((a\bullet h_1)\circ b)\bullet h_2h'
$$
--- is an isomorphism with the inverse 
$$
\beta^{-1}_{A,B}(((a\bullet h)\circ b)\bullet h')=
(a\bullet h_1)\circ (b\bullet S(h_2)h').
$$
This map is checked to be well-defined --- that is, $R^e$--balanced in all
of the occurring tensor products --- by computations similar to those in the
proof of Theorem \ref{thm:bim_obj}. Moreover,
\begin{eqnarray*}
\beta^{-1}_{A,B}\beta_{A,B}((a\bullet h)\circ (b\bullet h'))&=&
(a\bullet h_1)\circ (b\bullet S(h_2)h_3h')
\stackrel{\eqref{antipodeaxioms}}=
(a\bullet h_1)\circ (b\bullet \sqcap^R(h_2)h')\\
&\stackrel{\eqref{eq:bullet_left}}=& 
(a\bullet h_1)\circ (\sqcap^R(h_2)\ox 1) \cdotdot (b\bullet h')\\
&=& (a\bullet h_1)\cdotdot (\sqcap^R(h_2)\ox 1) \circ (b\bullet h')\\
&\stackrel{\eqref{eq:bullet_right}}=& 
(a\bullet h_1\sqcap^R(h_2)) \circ (b\bullet h')
\stackrel{\eqref{counitalpropertiesofmaps}}=
(a\bullet h) \circ (b\bullet h').
\end{eqnarray*}
A similar computation verifies $\beta_{A,B}\beta^{-1}_{A,B}=\mathsf{id}$.

Conversely, assume that $\beta_{A,B}$ is an isomorphism, for any 
objects $A,B$ in $\mathsf{bim}(R^e)$. Then it is an isomorphism,
in particular, for $A=B=R^e\ox R^e$ with the $R^e$--actions
$$
(r\ox s)((x\ox y)\ox (v\ox w))(r'\ox s'):=(rx\ox ys) \ox (vr'\ox s'w).
$$
Using the isomorphisms
$$
\begin{array}{lcl}
R\ox R\ox R\ox H1_1\ox \sqcap^R(1_2) H&\to&
((R^e\ox R^e)\bullet H)\circ ((R^e\ox R^e)\bullet H)\\
x\ox y\ox z\ox h1_1\ox \sqcap^R(1_2) h'&\mapsto&
(((1\ox x)\ox (1\ox 1))\bullet h)\circ
(((1\ox y)\ox (1\ox z))\bullet h')\\
\\
R\ox R\ox R\ox 1_1H\ox 1_2H&\to&
(((R^e\ox R^e)\bullet H)\circ (R^e\ox R^e))\bullet H \\
x\ox y\ox z\ox 1_1h \ox 1_2 h'&\mapsto&
((((1\ox x)\ox (1\ox 1))\bullet h)\circ
((1\ox y)\ox (1\ox z)))\bullet h',
\end{array}
$$
we obtain that
\begin{equation*}
\begin{array}{lll}
R\ox R\ox R\ox H1_1\ox \sqcap^R(1_2)H 
&\rightarrow& R\ox R\ox R\ox 1_1H\ox 1_2H\\
x\ox y\ox z\ox h1_1\ox \sqcap^R(1_2)h'&\mapsto& x\ox y\ox z\ox h_1\ox h_2h'
\end{array}
\end{equation*}
is an isomorphism. Then also the Galois map $H1_1\ox
\sqcap^R(1_2)H\to 1_1H\ox 1_2H$, $h1_1\ox \sqcap^R(1_2)h'\mapsto h_1\ox
h_2h'$ is an isomorphism. This means equivalently that $H$ is a
weak Hopf algebra (see \cite[Corollary 6.2]{Schauenburg:2003} for the details
of this equivalent characterization of weak Hopf algebras among week
bialgebras). 
\end{proof}

Let us take the full subcategory $\mathsf{grp}$ of groupoids in the category
of small categories with finitely many objects. The morphisms in
$\mathsf{grp}$ are functors (so that they are compatible with the inverse
operation on the morphisms). 
Similarly, let us take the full subcategory $\mathsf{wha}$ of weak Hopf
algebras in $\mathsf{wba}$. Its morphisms are the coalgebra maps $H\to
H'$ rendering commutative the diagrams in Theorem \ref{thm:wba_mor}. Note that 
there is no reason to expect that all of them will be compatible with the
antipodes (that is, the equality $S'Q=QS$ will hold). In fact,
compatibility with the antipodes is equivalent to $\sqcap^{\prime
L}Q=Q\sqcap^L$ holding true. 

\begin{theorem}\label{thm:grpwha}
The adjunction in Section \ref{sec:adjoint} restricts to an iso unit
adjunction between $\mathsf{grp}$ and $\mathsf{wha}$.
\end{theorem}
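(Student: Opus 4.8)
The plan is to show that the adjunction $\mathsf{k}\dashv \mathsf{g}$ of Section~\ref{sec:adjoint} sends the full subcategory $\mathsf{grp}$ into $\mathsf{wha}$ along $\mathsf{k}$, and the full subcategory $\mathsf{wha}$ into $\mathsf{grp}$ along $\mathsf{g}$, and then to invoke the general fact that an adjunction restricts to full subcategories $\mathcal{A}_0 \subseteq \mathcal{A}$ and $\mathcal{B}_0 \subseteq \mathcal{B}$ as soon as the left adjoint maps $\mathcal{A}_0$ into $\mathcal{B}_0$ and the right adjoint maps $\mathcal{B}_0$ into $\mathcal{A}_0$; the unit remains a natural isomorphism on the restricted adjunction because it is already one on the whole adjunction. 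So the real content is the two inclusion claims.

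First I would check that $\mathsf{k}$ sends a groupoid $A$ (with finitely many objects) to a weak Hopf algebra. This is already essentially recalled in Section~\ref{sec:functor_k}: for a groupoid $A$, the weak bialgebra $kA$ carries a weak Hopf algebra structure with antipode given on the basis by $S(a) = a^{-1}$ (see \cite{NykVai}); one only needs to observe that a functor $f:A\to A'$ between groupoids automatically satisfies $f(a^{-1}) = f(a)^{-1}$, so $kf$ is automatically a morphism of weak Hopf algebras in the sense relevant here (it is already a morphism in $\mathsf{wba}$ by Proposition~\ref{fAA'}, and $\mathsf{wha}$ is a \emph{full} subcategory of $\mathsf{wba}$, so no extra compatibility with antipodes is demanded of morphisms). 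Hence $\mathsf{k}$ restricts to a functor $\mathsf{grp}\to\mathsf{wha}$.

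Second, and this is where the work lies, I would show that if $H$ is a weak Hopf algebra then the category $\mathsf{g}(H)$ of Proposition~\ref{prop:g_obj} is a groupoid. Concretely, given $g\in\mathsf{g}(H)$, I would produce an inverse morphism. The natural candidate is $S(g)$, where $S$ is the antipode; by part~(iv) of Lemma~\ref{lem:gr-Hopf} we have $S^2(g)=g$, by \eqref{eq:S_a_multiplicative} $\Delta S(g) = S(g)\ox S(g)$, by part~(v) of Lemma~\ref{lem:gr-Hopf} the endpoints get swapped, $\sqcap^R S(g) = \overline\sqcap^R(g)$ and $\overline\sqcap^R S(g) = \sqcap^R(g)$, and by part~(vi) both $\Delta\sqcap^R S(g)$ and $\Delta\overline\sqcap^R S(g)$ split as required; so $S(g)\in\mathsf{g}(H)$ and it is composable with $g$ on both sides. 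That $g\,S(g)=\overline\sqcap^R(g)=\sqcap^R(S(g))$ and $S(g)\,g=\sqcap^R(g)=\overline\sqcap^R(S(g))$ — i.e.\ that $S(g)$ is a genuine two-sided inverse in the groupoid sense — follows from the weak Hopf axioms \eqref{antipodeaxioms}: $g_1 S(g_2)=\sqcap^L(g)$ together with $\Delta(g)=g\ox g$ gives $g\,S(g)=\sqcap^L(g)=\overline\sqcap^R(g)$ by part~(iii) of Lemma~\ref{lem:gr-Hopf}, and symmetrically on the other side. Hence every morphism of $\mathsf{g}(H)$ is invertible, so $\mathsf{g}(H)$ is a groupoid. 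Since, by Proposition~\ref{prop:g_mor}, any morphism in $\mathsf{wba}$ restricts to a functor between the $\mathsf{g}$'s, and $\mathsf{grp}$ is full in $\mathsf{cat}$, $\mathsf{g}$ restricts to a functor $\mathsf{wha}\to\mathsf{grp}$.

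Putting the two inclusions together, the bijection $\mathsf{wba}(\mathsf{k}(A),H)\cong\mathsf{cat}(A,\mathsf{g}(H))$ of Section~\ref{sec:adjoint} restricts, for $A$ a groupoid and $H$ a weak Hopf algebra, to a natural bijection $\mathsf{wha}(\mathsf{k}(A),H)\cong\mathsf{grp}(A,\mathsf{g}(H))$ (using fullness of $\mathsf{wha}$ in $\mathsf{wba}$ and of $\mathsf{grp}$ in $\mathsf{cat}$ to identify the hom-sets), which is exactly the asserted adjunction $\mathsf{k}\dashv\mathsf{g}:\mathsf{wha}\to\mathsf{grp}$; and its unit, being the restriction of the iso unit of the larger adjunction, is again a natural isomorphism. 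The main obstacle I anticipate is purely bookkeeping: verifying cleanly that $S(g)\in\mathsf{g}(H)$ and that it inverts $g$ in the groupoid $\mathsf{g}(H)$, which amounts to assembling the right combination of the identities in Lemmas~\ref{lem:gr-like} and~\ref{lem:gr-Hopf} rather than proving anything genuinely new.
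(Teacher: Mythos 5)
Your proposal is correct and follows essentially the same route as the paper: verify that $kA$ is a weak Hopf algebra with antipode $a\mapsto a^{-1}$ when $A$ is a groupoid, and that for a weak Hopf algebra $H$ the category $\mathsf g(H)$ is a groupoid with inverse $g\mapsto S(g)$, using parts (iii)--(vi) of Lemma \ref{lem:gr-Hopf} exactly as the paper does. The only (harmless) difference is that you make the formal restriction-of-adjunction step and the fullness of the subcategories explicit, which the paper leaves implicit.
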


\begin{proof}
First we check that $\mathsf k:\mathsf{cat}\to \mathsf{wba}$ restricts
to a functor $\mathsf{grp} \to \mathsf{wha}$. 
If $A$ is a groupoid, then $kA$ has a weak Hopf algebra structure via
the antipode $S:kA\rightarrow kA$, sending every $a\in A$ to $a^{-1}$. 
The antipode axioms hold by
\begin{eqnarray*}
 a_1S(a_2)&=& 
aS(a)=a.a^{-1}={t(a)}=\sqcap^L_{kA}(a)\\
S(a_1)a_2&=& 
S(a)a=a^{-1}.a={s(a)}=\sqcap^{R}_{kA}(a)\\
S(a_1)a_2S(a_3)&=&S(a)aS(a)=a^{-1}.a.a^{-1}=
a^{-1}=S(a),
\end{eqnarray*}
see \cite[Section 2.5]{NykVai}. On the other hand, also $\mathsf
g:\mathsf{wba} \to \mathsf{cat}$ restricts to a functor $\mathsf{wha} \to
\mathsf{grp}$. That is, if $H$ is a weak Hopf algebra, then $\mathsf g(H)$ is
a groupoid (with many finitely objects) with the inverse operation $\mathsf
g(H)\rightarrow \mathsf g(H)$, $g\mapsto S(g)$. 
In order to see that $S(g)$ is an element of $\mathsf g(H)$ 
indeed, note that $\Delta
S(g)=(S\ox S)\Delta^{op}(g)=S(g)\ox S(g)$ and $\epsilon S(h)=\epsilon (h)=1$
follow from the fact that $S$ is an anti-coalgebra map. By part (v) of Lemma \ref{lem:gr-Hopf} also the other two conditions on
elements of $\mathsf g(H)$ hold true and the to-be-inverse
operation $g\mapsto S(g)$ is compatible with the source and target
maps. Moreover, it works as an inverse by 
\begin{eqnarray*}
g.g^{-1}&=&gS(g)=g_1S(g_2)=\sqcap^L(g)=\overline \sqcap^R(g)=
{t_{\mathsf g (H)}(g)} \qquad \textrm{and}\\
g^{-1}.g&=&S(g)g=S(g_1)g_2=\sqcap^R(g)={s_{\mathsf g (H)}(g)},
\end{eqnarray*}
where the penultimate equality in the first line follows by part (iii)
of Lemma \ref{lem:gr-Hopf}. 
\end{proof}
 
The following corollaries are immediate consequences of Corollary
\ref{catwba-equivalence} and Corollary \ref{cor:algclosed},
respectively. 

\begin{corollary}\label{grpdwha-equivalence}
The functors $\mathsf k$ and $\mathsf g$ induce an equivalence between the
category of all small groupoids with finitely many objects, and the
full subcategory of $\mathsf{wha}$ of all pointed cosemisimple weak Hopf
algebras over a given field $k$.
\end{corollary}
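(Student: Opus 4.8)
The plan is to deduce the statement from Corollary \ref{catwba-equivalence} together with Theorem \ref{thm:grpwha}, by restricting the equivalence of Corollary \ref{catwba-equivalence} to the relevant full subcategories. Write $\mathsf{pcs}$ for the full subcategory of $\mathsf{wba}$ on the pointed cosemisimple weak bialgebras, and $\mathsf{pcsh}$ for its full subcategory on those which are moreover weak Hopf algebras; note that $\mathsf{pcsh}$ is then also a full subcategory of $\mathsf{wha}$, while $\mathsf{grp}$ is a full subcategory of $\mathsf{cat}$. By Corollary \ref{catwba-equivalence} the pair $\mathsf k\dashv\mathsf g$ restricts to an adjoint equivalence $\mathsf{cat}\simeq\mathsf{pcs}$. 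I will use the elementary fact that an adjoint equivalence $F\colon\mathcal C\to\mathcal D$, $G\colon\mathcal D\to\mathcal C$ restricts to an adjoint equivalence between full subcategories $\mathcal C'\subseteq\mathcal C$ and $\mathcal D'\subseteq\mathcal D$ as soon as $F(\mathcal C')\subseteq\mathcal D'$ and $G(\mathcal D')\subseteq\mathcal C'$: the components of the unit and counit isomorphisms at objects of $\mathcal C'$, resp. $\mathcal D'$, then connect objects of $\mathcal C'$, resp. $\mathcal D'$, hence lie in these subcategories by fullness, and naturality restricts. So it suffices to check that $\mathsf k$ carries $\mathsf{grp}$ into $\mathsf{pcsh}$ and that $\mathsf g$ carries $\mathsf{pcsh}$ into $\mathsf{grp}$.

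Both verifications are already contained in Theorem \ref{thm:grpwha}. For the first: if $A$ is a groupoid with finitely many objects then $kA$ is a weak Hopf algebra by the first half of the proof of Theorem \ref{thm:grpwha}, and $kA$ is pointed cosemisimple because its morphisms provide a basis of group-like elements; hence $\mathsf k(A)\in\mathsf{pcsh}$. For the second: if $H$ is any weak Hopf algebra then $\mathsf g(H)$ is a groupoid with finitely many objects by the second half of the proof of Theorem \ref{thm:grpwha}, so in particular $\mathsf g$ sends $\mathsf{pcsh}$ into $\mathsf{grp}$. Applying the elementary fact above to the equivalence $\mathsf{cat}\simeq\mathsf{pcs}$ yields the desired equivalence between $\mathsf{grp}$ and $\mathsf{pcsh}$, with quasi-inverse functors $\mathsf k$ and $\mathsf g$ and the natural isomorphisms (iso unit, and iso counit on this subclass) inherited from Corollary \ref{catwba-equivalence}.

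There is essentially no obstacle here beyond bookkeeping: the substantive work has been done in Proposition \ref{prop:counitadjunction}, which identifies exactly the objects at which the counit is an isomorphism and thereby underlies Corollary \ref{catwba-equivalence}, and in Theorem \ref{thm:grpwha}, which checks that $\mathsf k$ and $\mathsf g$ respect the Hopf structure and preserve the iso-unit property. If one wishes to record moreover that $\mathsf{grp}$ and $\mathsf{pcsh}$ are precisely — not merely compatibly — the subcategories matched by the equivalence, one adds the converse observations: for $A\in\mathsf{cat}$, if $\mathsf k(A)$ is a weak Hopf algebra then $A\cong\mathsf{gk}(A)=\mathsf g(kA)$ is a groupoid by Theorem \ref{thm:grpwha}; and for $H\in\mathsf{pcs}$, if $\mathsf g(H)$ is a groupoid then $H\cong\mathsf{kg}(H)=k(\mathsf g(H))$ is a weak Hopf algebra, using the unit and the counit isomorphisms respectively. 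This last line is the only place where the hypothesis $H\in\mathsf{pcs}$ is genuinely used, namely to have the counit component at $H$ available as an isomorphism.
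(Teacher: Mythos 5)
Your proposal is correct and follows the same route the paper intends: the paper states this corollary as an immediate consequence of Corollary \ref{catwba-equivalence} together with Theorem \ref{thm:grpwha}, and your argument simply makes explicit the standard restriction of an adjoint equivalence to full subcategories preserved by both functors. The two verifications you isolate ($\mathsf k(\mathsf{grp})$ lands in pointed cosemisimple weak Hopf algebras, $\mathsf g$ of a weak Hopf algebra is a groupoid) are exactly the content of Theorem \ref{thm:grpwha} plus the observation that $kA$ is spanned by group-likes, so there is nothing to add.
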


\begin{corollary}
If $k$ is an algebraically closed field, then the functors $\mathsf k$ and
$\mathsf g$ induce an equivalence between the category of all small groupoids
with finitely many objects, and the full subcategory of $\mathsf{wha}$ of all
cocommutative cosemisimple weak Hopf algebras. 
\end{corollary}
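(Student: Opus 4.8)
The plan is to deduce this corollary from Corollary~\ref{grpdwha-equivalence} in exactly the way Corollary~\ref{cor:algclosed} was deduced from Corollary~\ref{catwba-equivalence}. The only point to establish is that, for a weak Hopf algebra $H$ over an algebraically closed field $k$, being \emph{pointed cosemisimple} as a coalgebra is equivalent to being \emph{cocommutative cosemisimple}. Once this coincidence of object classes is known, the full subcategory of $\mathsf{wha}$ of pointed cosemisimple weak Hopf algebras and the full subcategory of $\mathsf{wha}$ of cocommutative cosemisimple weak Hopf algebras are literally the same category, and the assertion becomes a verbatim restatement of Corollary~\ref{grpdwha-equivalence}, with the restricted adjunction $\mathsf k\dashv\mathsf g$ supplied by Theorem~\ref{thm:grpwha} and its iso unit coming from the adjunction in Section~\ref{sec:adjoint}.

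First I would record the implication ``pointed cosemisimple $\Rightarrow$ cocommutative''. If $H$ is pointed cosemisimple, then as a coalgebra it decomposes as the direct sum of its simple subcoalgebras, each of which is one-dimensional and hence spanned by a group-like element; so $H\cong k\{g\in H:\Delta(g)=g\ox g,\ \epsilon(g)=1\}$ on the group-like basis, and such a coalgebra is cocommutative. This is precisely the observation already made in the proof of Proposition~\ref{prop:counitadjunction} (and it is what makes Proposition~\ref{prop:g(H)cocommutativeH} applicable there), so nothing new is needed. A cocommutative cosemisimple coalgebra is of course in particular cosemisimple, so the ``cosemisimple'' half of each condition matches on both sides with no extra argument.

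For the reverse implication I would invoke the hypothesis that $k$ is algebraically closed: by \cite[Theorem 2.3.3]{Abe} every cocommutative coalgebra over an algebraically closed field is pointed, hence a cocommutative cosemisimple weak Hopf algebra over $k$ is pointed cosemisimple. Combining the two implications, the two full subcategories of $\mathsf{wha}$ coincide, and Corollary~\ref{grpdwha-equivalence} gives the claimed equivalence between $\mathsf{grp}$ and this subcategory. There is essentially no hard step: all the substantive work has already been done in Corollary~\ref{grpdwha-equivalence} (and ultimately in Theorem~\ref{thm:grpwha} and Proposition~\ref{prop:counitadjunction}), and the passage to algebraically closed fields is the same routine bookkeeping used to obtain Corollary~\ref{cor:algclosed}; the only thing warranting a moment's care is noting that the coalgebra isomorphism $H\cong\mathsf{kg}(H)$ furnished by the counit in the pointed cosemisimple case is already a weak Hopf algebra isomorphism, which is part of Corollary~\ref{grpdwha-equivalence}.
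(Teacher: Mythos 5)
Your proof is correct and matches the paper's (unstated, one-line) argument: the paper obtains this corollary as an immediate consequence of Corollary~\ref{cor:algclosed} restricted via Theorem~\ref{thm:grpwha}, while you equivalently restate Corollary~\ref{grpdwha-equivalence} using the same fact from \cite[Theorem 2.3.3]{Abe} that over an algebraically closed field cocommutative cosemisimple and pointed cosemisimple coalgebras coincide. These are just two factorings of the same immediate deduction, so there is nothing to add.
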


\begin{example}
Assume $k$ to be a field of characteristic $0$, and let $N$ be a positive
integer. The `algebraic quantum torus'; that is, the algebra $H =
k\langle U,V, V^{-1} \vert U^N=1,VU=qUV\rangle$, with $q \in k$ such that $q^N
= 1$, is a double crossed product weak Hopf algebra of the group 
Hopf algebra $k\langle V, V^{-1} \rangle$ and the $N$-dimensional weak Hopf
algebra $B:= k\langle U \vert U^N = 1 \rangle $ with the
comultiplication 
$$
\Delta(U^n) = \frac 1 N \sum_{j=1}^N (U^{ j+n}\ox U^{ -j}),
$$ 
 the counit defined by $\epsilon(1) = N, \epsilon (U^n) = 0
\hbox{ if } U^n \neq 1$ and the antipode $S = \mathsf{id}$ (see
\cite[Example 9]{Bohm/Gomez:2013}). 

For any $N$th root of unity $\omega \in k$ (possibly, different from $q$),
we have a group-like element $g_\omega = \frac 1 N \sum_{j=1}^{N}
\omega^{ j} U^{ j}$ of $B$. Thus,
if $k$ contains a primitive $N$th root of unity (so that the set
$T:=\{\omega\in k : \omega^N=1\}$ has $N$ elements) then, as
coalgebras, 
$$
B = \bigoplus_{ \omega \in T}
kg_\omega \qquad \textrm{and} \qquad
H = \bigoplus_{{ \omega \in T},
\ m \in \mathbb{Z}} 
kg_\omega V^m.
$$

We deduce from Corollary \ref{grpdwha-equivalence} that in this case
 $H$ is isomorphic to the groupoid weak Hopf algebra $k
\mathsf{g}$, where $\mathsf{g} = \{ g_{\omega}V^m \, \vert \, \omega\in
T, m \in \mathbb{Z} \}$. This groupoid has $N$ objects $\{g_{\omega}
\, \vert \, \omega \in T \}$, but it is not finite. Since
$g_{\omega}g_{\omega'} = 0$ if $\omega \neq \omega'$, and $g_{\omega}^2 =
g_{\omega}$, we get that two morphisms $g_{\omega}V^m, g_{\nu}V^n$ of
$\mathsf{g}$ are composable if and only if $\omega = \nu q^m$, 
and, in this case, $g_{\omega}V^mg_{\nu}V^n = g_{\omega}V^{m+n}$. 
\end{example}


\begin{thebibliography}{15}
\bibitem{Abe}
E. Abe, {\em Hopf algebras}.
University Press, Cambridge, 1980, ISBN 0 521 22240 0. 

\bibitem{Abrams}
L. Abrams, {\em Modules, comodules and cotensor over Frobenius algebras}, 
J. Algebra \textbf{219} (1999), 201-213.

\bibitem{Aguiar&Mahajan}
M. Aguiar and S. Mahajan,
{\em Monoidal Functors, Species and Hopf Algebras}.
CRM Monograph Series \textbf{29}, American Math. Soc. Providence, 2010.
Electronically available at:
\href{http://www.math.tamu.edu/~maguiar/a.pdf}{http://www.math.tamu.edu/$\sim$maguiar/a.pdf}.


\bibitem{Booker&Street}
T. Booker and R. Street,
{\em Tannaka duality and convolution for duoidal categories},
Theory Appl. Categ. \textbf{28} (2013), no. 6, 166-205. 

\bibitem{Bohm:2009}
G. B\"ohm,
{\em Hopf Algebroids}.
in: Handbook of Algebra Vol \textbf{6}, M. Hazewinkel (ed.), pp. 173-236,
Elsevier 2009.

\bibitem{BohmEtAll:}
G. B\"ohm, S. Caenepeel and K. Janssen, 
{\em Weak bialgebras and monoidal categories}, 
Comm. Algebra \textbf{39} (2011), 4584-4607.

\bibitem{Bohm/Gomez:2013}
G. B\"ohm and J. G\'omez-Torrecillas, 
{\em On the double crossed product of weak Hopf algebras}, 
in: `Hopf Algebras and Tensor Categories'.
N. Andruskiewitsch et al. (eds.), Contemp. Math. \textbf{585}
pp. 153-173, AMS Providence 2013.

\bibitem{BohmEtAll:1999}
G. B\"ohm, F. Nill and K. Szlach\'anyi, 
{\em Weak Hopf algebras I. Integral theory and $C^*$-structure}, 
J. Algebra \textbf{221} (1999), 385-438. 

\bibitem{BrLaVi}
A. Brugui\'eres, S. Lack and A. Virelizier, 
{\em Hopf monads on monoidal categories}, 
Adv. in Math. \textbf{227} (2011), 745-800.
 

\bibitem{CaeDLo}
S. Caenepeel and M. De Lombaerde, 
{\em A categorical approach to Turaev's Hopf group-coalgebras}, 
Comm. Algebra \textbf{34} (2006), 2631-2657.

\bibitem{CaeGoy}
S. Caenepeel and I. Goyvaerts, 
{\em Monoidal Hom-Hopf algebras}, 
Comm. Algebra \textbf{39} (2011), 2216-2240. 

\bibitem{Day&Street:1997}
B.J. Day and R. Street, 
{\em Lax monoids, pseudo-operads, and convolution},
in: "Diagrammatic Morphisms and Applications",
D.E. Radford, F.J.O. Souza and D.N. Yetter (eds.)
Contemp. Math. \textbf{318} pp. 75--96, AMS Providence 2003. 

\bibitem{IngDeMey}
F. De Meyer and E. Ingraham, 
{\em Separable algebras over a commutative ring}. 
Lecture Notes in Mathematics \textbf{181}, Springer-Verlag, New York 1971.

\bibitem{Lu}
J.H. Lu, 
{\em Hopf algebroids and quantum groupoids}, 
Internat. J. Math. \textbf 7 (1996), 47-70.
 
\bibitem{CWM}
S. Mac Lane,
{\em Categories for the working mathematician}.
Second edition, Springer Verlag New York, 1998.
 
\bibitem{SilMak}
A. Makhlouf and S.D. Silvestrov, 
{\em Hom-algebras and Hom-coalgebras}, 
J. Algebra Appl. \textbf 9 (2010), 553-589.
 
\bibitem{Moerdijk:2002}
I. Moerdijk, 
{\em Monads on tensor categories}, 
J. Pure and Appl. Algebra \textbf{168} (2002), 189-208.

\bibitem{NykVai}
D. Nikshych and L. Vainerman, 
{\em Finite quantum groupoids and their applications}, 
in: `New Directions in Hopf Algebras'.
S. Montgomery and H-J. Schneider (eds.) pp. 211--262,
Math. Sci. Res. Inst. Publ. \textbf{43}, 2002.

\bibitem{PastroStreet}
C. Pastro and R. Street, 
{\em Weak Hopf monoids in braided monoidal categories},
Algebra and Number Theory \textbf 3 (2009), 149-207.

\bibitem{Schauenburg:2003}
P. Schauenburg, 
{\em Weak Hopf algebras and quantum groupoids}, 
in: `Noncommutative Geometry and Quantum Groups'.
P.M. Hajac and W. Pusz (eds.),
Banach Center Publ. \textbf{61} pp. 171-188, 2003.

\bibitem{Street:1972}
R. Street, 
{\em The formal theory of monads}, 
J. Pure and Appl. Algebra \textbf{2} (1972), 149-168.

\bibitem{Szlach:2001}
K. Szlach\'anyi,
{\em Finite quantum groupoids and inclusions of finite type}, 
in: `Mathematical Physics in Mathematics and Physics: Quantum and Operator
Algebraic Aspects'. R. Longo (ed.), 
Fields Inst. Comm. \textbf{30} pp. 393-407, AMS Providence 2001.

\bibitem{Szlach:Gal_fin}
K. Szlach\'anyi,
{\em Galois actions by finite quantum groupoids},
in: `Locally Compact Quantum Groups and Groupoids'.
L. Vainerman (ed.), pp. 105--126,
IRMA Lectures in Mathematics and Theoretical Physics \textbf{2},
Walter de Gruyter, Berlin-New York, 2003. 

\bibitem{Szlachanyi:2003}
K. Szlach\'anyi,
{\em The monoidal Eilenberg-Moore construction and bialgebroids}, 
J. Pure Appl. Algebra \textbf{182} (2003), 287-315.

\bibitem{Takeuchi} 
M. Takeuchi,
{\em Groups of algebras over $A\otimes \overline{A}$},
J. Math. Soc. Japan \textbf{29} no. 3 (1977), 459-492.

\bibitem{Tur}
V.G. Turaev, 
{\em Homotopy field theory in dimension 3 and crossed group-categories}, 
preprint available at 
\href{http://arxiv.org/abs/math/0005291}{http://arxiv.org/abs/math/0005291}.
 
\bibitem{Vecs}
P. Vecserny\'es, 
{\em Larson-Sweedler theorem and the role of grouplike elements in weak Hopf
 algebras}, 
J. Algebra \textbf{270} no. 2 (2003), 471-520.

\end{thebibliography}
\end{document}